\newtheorem{lemma}{Lemma}[section]
\newtheorem{proposition}{Proposition}[section]
\newtheorem{remark}{Remark}[section]
\newtheorem{theorem}{Theorem}[section]
\newcommand{\ve} {\varepsilon}
\newcommand{\R}{{\mathbb R}}
\newcommand{\ep}{\epsilon}
\newcommand{\Om}{\Omega}
\def\bs{\begin{split}}
\def\es{\end{split}}
\newcommand{\vect}[2]{\ensuremath{\left( \begin{array}{c}
            #1 \\
            #2
            \end{array}
        \right)}}
\numberwithin{equation}{section}
\title[Boundary Spike Cluster]{Stable Boundary Spike Clusters for the Two-Dimensional Gierer-Meinhardt System}
\author[W. Ao]{Weiwei Ao }
\address{\noindent Wuhan University, Department of Mathematics, China, 430072}
\email{wwao@whu.edu.cn}
\author[J. Wei]{Juncheng Wei}
\address{\noindent
Department of Mathematics,
University of British Columbia, Vancouver, B.C., Canada, V6T 1Z2}
\email{jcwei@math.ubc.ca}
\author[M. Winter]{Matthias Winter}
\address{Brunel University London, Department of Mathematics, Uxbridge UB8 3PH, United Kingdom}
\email{matthias.winter@brunel.ac.uk}
\begin{document}

\begin{abstract}
We consider the Gierer-Meinhardt system with small inhibitor diffusivity and very small activator diffusivity in a bounded and smooth two-dimensional domain.
For any given positive integer $k$
we construct a spike cluster consisting of $k$ boundary spikes which all approach the same nondegenerate local maximum point of the
boundary curvature. We show that this spike cluster is linearly stable.

The main idea underpinning
these stable spike clusters
is the following: due to the small inhibitor diffusivity
the  interaction between spikes
is repulsive and the spikes are attracted towards a nondegenerate local maximum point of the boundary curvature.
Combining these two effects can lead to an equilibrium of spike positions within the cluster such that the cluster is linearly stable.

\end{abstract}

\maketitle

{\it Keywords:} Pattern formation, reaction-diffusion system, spike, cluster, stability.

\section{Introduction}
\setcounter{equation}{0}

Turing in his pioneering work in 1952 \cite{T} proposed that a patterned distribution of two chemical substances, called the morphogens, could trigger the emergence of cell structures. He also gives the following explanation for the formation of the morphogenetic pattern: It is assumed that one of the morphogens, the activator, diffuses slowly and the other, the inhibitor, diffuses much faster. In the mathematical framework of a coupled system of reaction-diffusion equations with hugely different diffusion coefficients it is shown by linear stability analysis that the homogeneous state may be unstable. In particular, a small perturbation of spatially homogeneous initial data may evolve to a stable spatially complex pattern of the morphogens.

Since the work of Turing, many different reaction-diffusion system in biological modelling have been proposed and the occurrence of pattern formation has been investigated by studying what is now called Turing instability. One of the most popular models in biological pattern formation is the Gierer-Meinhardt system \cite{GM}, see also \cite{M}. In two dimensions in a special case after rescaling it can be stated as follows:
\begin{equation}
\label{gm}
\left\{\begin{array}{l}
u_t=\ve^2\Delta u-u+\frac{u^2}{v}, u>0 \mbox{ in }\Omega,\\[2mm]
\tau v_t=D\Delta v-v+u^2, v>0 \mbox{ in }\Omega,\\[2mm]
\frac{\partial u}{\partial \nu}=\frac{\partial v}{\partial \nu}=0 \mbox{ on }\partial \Omega.
\end{array}
\right.
\end{equation}
The unknown functions $u=u(x,t)$ and $v=v(x,t)$ represent the concentrations of the activator and inhibitor, respectively, at the point $x\in\Omega\subset \R^2$ and at a time $t>0$. Here $\Delta $ is the Laplace operator in $\R^2$, $\Omega$ is a smooth bounded domain in $\R^2$, $\nu=\nu(x)$ is the outer unit normal at $x\in \partial \Omega$.

Throughout this paper, we assume that
\begin{equation}
\label{smalld}
0<\ve\ll1, \ 0<D\ll1,
\end{equation}
$\tau\geq 0$ is a fixed constant independent of $\ve$, $D$ and $x$.
Further, the diffusivities $\ve$ and $D$ do not depend on $x$ but they are both small constants.
In this paper, we further assume that
\begin{equation}\label{equal}
e^{-\frac{1}{\sqrt{D}}}\ll\ve\ll\sqrt{D}.
\end{equation}
This means that $\ep$ is much smaller than ${D}$. On the other hand, $\ep$ cannot be exponentially small compared to $\sqrt{D}$.

In this paper, we study the Gierer-Meinhardt system in a bounded and smooth two-dimensional domain. We prove the existence and stability of a cluster consisting of $k$ boundary spikes near a nondegenerate local maximum point $P^0$ of the boundary curvature $h(P)$.

The main idea underpinning
these stable spike clusters
is the following: due to the small inhibitor diffusivity
the  interaction between spikes
is repulsive and the spikes are attracted towards a nondegenerate local maximum point of the boundary curvature. Combining these two effects can lead to an equilibrium of spike positions within the cluster such that the cluster is linearly stable.

The repulsive nature of spikes has been shown in \cite{ei-wei-2002}. Existence and stability of a spike cluster made up of two boundary spikes has been established in \cite{ei-ishimoto-2013}.

Before we state our main results, let us mention some previous ones concerning various regimes for the asymptotic behavior of $D$.

For the strong coupling case, i.e. $D\sim 1$, the second and third authors constructed single-interior spike solutions \cite{WW2}. In \cite{WW5}, they continued the study, and proved the existence of solutions with $k$ interior spikes.

Moreover, it is shown that this solution is linearly stable for $\tau=0$.

For the weak coupling case $D\to\infty$, in \cite{WW4} the second and third authors proved the existence of multiple interior spike solutions.
Further, they showed that there are stability thresholds
\begin{equation*}
D_1(\ve)>D_2(\ve)>\cdots>D_k(\ve)>\cdots
\end{equation*}
such that
if $\lim_{\ve\to 0}\frac{D_k(\ve)}{D}>1$, the $k$-peak solution is stable and if $\lim_{\ve\to 0}\frac{D_k(\ve)}{D}<1$, the $k$-peak solution is unstable.
Multiple spikes for the Gierer-Meinhardt system in a one-dimensional interval have been studied in \cite{T1,IWW,WW3} and on the real line in \cite{DGK}.

In \cite{W5} the existence, uniqueness and spectral properties of a boundary spike solution have been studied for the shadow Gierer-Meinhardt system (i.e. after formally taking the limit $D\to\infty$.)

In \cite{WW6} the existence and stability of $N-$peaked steady states for the Gierer-Meinhardt system with precursor inhomogeneity has been explored.
The spikes in the patterns can vary in amplitude. In particular, the results imply that a precursor inhomogeneity can induce instability.
Single-spike solutions for the Gierer-Meinhardt system with precursor including spike dynamics have been studied in \cite{wmhgm}.

Previous results on stable spike clusters include a stable spike cluster for a consumer chain model \cite{WW7} and a stable spike cluster for the one-dimensional Gierer-Meinhardt system with precursor inhomogeneity \cite{WW8}.

For more background, modelling, analysis and computation on the Gierer-Meinhardt system, we refer to \cite{ww-book} and the references therein.

\section{Main Results: Existence and Stability}
\setcounter{equation}{0}

Let $\Omega\subset \R^2$ be bounded and smooth two-dimensional domain.
Let $w$ be the unique solution in $H^1(\R^2)$ of the problem
\begin{equation}\label{ground}
\left\{\begin{array}{l}
\Delta w-w+w^2=0,\\[2mm]
w(0)=\max_{y\in \R^2}w(y), \ w(y)\to 0 \mbox{ as }|y|\to \infty.
\end{array}
\right.
\end{equation}
For the existence and uniqueness of (\ref{ground}), we refer to \cite{K} and \cite{NT}. We also recall that $w$ is radially symmetric and
\begin{equation*}
w(y)\sim |y|^{-\frac{1}{2}}e^{-|y|} \quad\mbox{ as }|y|\to \infty
\end{equation*}
and
\begin{equation*}
w'(y)=-(1+o(1))w(y) \quad\mbox{ as }|y|\to \infty,
\end{equation*}
where $w'$ is the radial derivative of $w$, i.e. $w'=w_r(r)$.

\begin{theorem}
\label{existenceth}
Let $k$ be a positive integer, and $P^0$ be a nondegenerate local maximum point of the curvature $h(P)$ of the boundary $\partial\Om$.

Then for $0<e^{-\frac{1}{\sqrt{D}}}\ll\ve\ll\sqrt{D}\ll1$, the Gierer-Meinhardt system (\ref{gm}) has a $k$-boundary spike cluster steady-state solution $(u_\ep,v_\ep)$ which concentrates near $P^0$. In particular, it satisfies
\[
u_\ep\sim \frac{D\xi_\sigma}{\ep^2} \sum_{i=1}^k w(\frac{x-P_{i,\ep}}{\ep}),
\]
where $P_{i,\ep}\to P^0$ as $\ep\to 0$ for $i=1,2,\ldots,k$.

Further, we have
\[
\xi_\sigma\sim \frac{1}{\log\frac{\sqrt{D}}{\ep}}
\]
and
\[
|P_i-P_{i-1}|\sim \sqrt{D}\log \frac{\xi_\sigma}{\ep D},\quad i=2,\ldots,k.
\]
\end{theorem}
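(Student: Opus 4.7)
The plan is a Lyapunov--Schmidt reduction around a multi-bump ansatz, with the spike locations $P_1,\dots,P_k\in\partial\Om$ serving as the finite-dimensional parameters. For the activator, I would take
\[
U(x)\;=\;\frac{D\,\xi}{\ep^{2}}\sum_{i=1}^{k} w_\ep\!\left(\frac{x-P_{i}}{\ep}\right),
\]
where each $w_\ep$ is a small Neumann correction of the rescaled ground state $w$ near the boundary (the standard Ni--Takagi type projection) and $\xi$ is an amplitude parameter. The inhibitor is then defined by solving $-D\Delta V+V=U^{2}$ with Neumann data. Writing the solution with the Green's function $G_D$ of $-D\Delta+1$ on $\Om$, one obtains $V(x)\approx \xi\int_{\R^{2}}w^{2}\cdot\sum_{i}G_D(x,P_i)$. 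The logarithmic singularity of $G_D$ in two dimensions, combined with the matching of $V(P_i)$ to the coefficient needed for the activator equation to close, forces $\xi=\xi_\sigma\sim 1/\log(\sqrt{D}/\ep)$.

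Next I would linearize the system around $(U,V)$ and work modulo the approximate kernel, which at each boundary spike is one-dimensional and spanned by the tangential derivative $\partial w/\partial\tau_i$ (the normal direction being frozen because $P_i\in\partial\Om$). Standard spectral estimates for the linearization of $\Delta w-w+w^{2}$, together with control of the off-diagonal couplings by the smallness of $\xi_\sigma$ and by the exponential decay of $G_D$ at scale $\sqrt D$, yield a uniformly invertible operator on the orthogonal complement. A contraction-mapping argument then produces, for each admissible configuration $(P_1,\dots,P_k)$, a perturbation $(\phi,\psi)$ whose size is controlled by the size of the error of the ansatz.

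Step three is the reduced equation. Projecting the full residual onto each $\partial w/\partial\tau_i$ yields $k$ scalar balance equations whose leading-order form equates an attractive \emph{curvature force} $\propto\ep\,\partial_{\tau_i}h(P_i)$, pulling spikes toward the nondegenerate maximum $P^{0}$ of $h$, against a repulsive \emph{interaction force} $\propto\partial_{\tau_i}\!\sum_{j\neq i}G_D(P_i,P_j)$, which decays at rate $1/\sqrt D$. Writing $P_i=P^{0}+\sqrt{D}\,q_i$ in boundary arc-length coordinates and balancing the two forces produces simultaneously the cluster spacing $|P_i-P_{i-1}|\sim\sqrt D\,\log(\xi_\sigma/(\ep D))$ (the $\log$ coming from equating $\ep$ with the exponentially small factor $e^{-|P_i-P_{i-1}|/\sqrt D}/D$) and a smooth finite-dimensional system for $(q_1,\dots,q_k)$. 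Its linearization decomposes into the (invertible, by assumption) Hessian of $h$ at $P^{0}$ plus a diagonally dominant tridiagonal matrix of nearest-neighbor repulsions, hence the implicit function theorem yields a unique cluster configuration with the stated asymptotics.

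The principal obstacle is the precision of the asymptotic expansion of $G_D$ near the diagonal and near the boundary: one must separate the singular part $-\tfrac{1}{2\pi D}\log|x-P|$ from the regular part, keep track of the boundary reflection (which doubles the singularity when $P\in\partial\Om$) and expand the regular part in powers of $\sqrt D$ to extract the curvature contribution at the correct order. A secondary difficulty, absent in one space dimension and in the regimes $D\sim 1$ or $D\to\infty$, is that the activator and inhibitor scales interact logarithmically through $\xi_\sigma$, so the amplitude itself appears inside the exponent governing the spike spacing; the self-consistency of this coupling, together with the condition $e^{-1/\sqrt{D}}\ll\ep\ll\sqrt D$ that guarantees the cluster fits inside a small neighborhood of $P^{0}$, must be checked in order to close the fixed-point argument on the reduced variables.
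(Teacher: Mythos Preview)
Your outline is correct and follows essentially the same route as the paper: a boundary-projected multi-bump ansatz with amplitude $\xi_\sigma$ fixed by the logarithmic singularity of the Green's function, Liapunov--Schmidt reduction modulo the tangential derivatives $\partial w/\partial\tau_i$, and a reduced system balancing the curvature force against nearest-neighbour Green's-function repulsion. The only cosmetic difference is that where you invoke the implicit function theorem on the reduced system via the Hessian-plus-tridiagonal decomposition, the paper instead solves the limiting reduced system explicitly (obtaining the spacing formula and the centring condition $\sum_i p_i^0=p^0$) and then perturbs; the two are equivalent.
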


\begin{remark}
The spike cluster is established by a balance between repelling spikes and attracting boundary point of local maximum curvature.
\end{remark}

\begin{theorem}
\label{stabilityth}
The $k$-boundary spike cluster solution given in Theorem \ref{existenceth} is linearly stable if $\tau$ is small enough.
\end{theorem}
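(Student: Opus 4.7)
My plan is to analyse the linearized eigenvalue problem around $(u_\ep,v_\ep)$
\[
\ep^2 \Delta \phi - \phi + 2\frac{u_\ep}{v_\ep}\phi - \frac{u_\ep^2}{v_\ep^2}\psi = \lambda_\ep \phi, \qquad
D\Delta\psi - \psi + 2 u_\ep \phi = \tau\lambda_\ep \psi,
\]
with Neumann boundary data, by splitting the spectrum in the usual way into \emph{large eigenvalues} (those bounded away from $0$ as $\ep\to 0$) and \emph{small eigenvalues} (those tending to $0$). Since $\tau$ is assumed small, it suffices to prove stability at $\tau=0$ and extend by a standard perturbation argument, using that $0$ will end up in the resolvent set of the $\tau=0$ linearisation.

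For the large eigenvalues I would eliminate $\psi$ via the Green's function $G_D$ of $D\Delta -1$, substitute into the first equation, and rescale around each spike by $x=P_{i,\ep}+\ep y$. Using the cluster ansatz $u_\ep\sim (D\xi_\sigma/\ep^2)\sum_i w((x-P_{i,\ep})/\ep)$ this should reduce to a vector nonlocal eigenvalue problem on $\R^2$ of the form
\[
\Delta \Phi - \Phi + 2w\Phi - 2\,\mathcal B\, \frac{\int_{\R^2} w\Phi}{\int_{\R^2} w^2}\,w^2 = \lambda\,\Phi,
\]
for $\Phi=(\Phi_1,\dots,\Phi_k)^T$, where $\mathcal B$ is a $k\times k$ coupling matrix built out of $\xi_\sigma$ and the values $G_D(P_{i,\ep},P_{j,\ep})$. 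Diagonalising $\mathcal B$ produces $k$ scalar NLEPs of Gierer--Meinhardt type with exponent pair $(2,2)$, whose stability (giving $\mathrm{Re}\,\lambda\le -c<0$) is controlled by the eigenvalues of $\mathcal B$ through the NLEP theory developed in \cite{WW2,WW4,WW5}. I would exploit the logarithmic behaviour of $G_D$ on the $\sqrt D$ scale together with the separation $|P_i-P_{i-1}|\sim \sqrt D \log(\xi_\sigma/(\ep D))$ and the size $\xi_\sigma\sim 1/\log(\sqrt D/\ep)$ from Theorem \ref{existenceth} to verify that all eigenvalues of $\mathcal B$ lie on the stable side of the NLEP threshold.

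For the small eigenvalues I would carry out a Lyapunov--Schmidt reduction in the approximate kernel spanned by the tangential boundary translation modes $\partial_{\tau_i}u_\ep$ (one per spike), writing $\phi = \sum_{i=1}^k a_i\partial_{\tau_i}u_\ep + \phi^\perp$, solving for $\phi^\perp$ and $\psi$ as functions of $a=(a_1,\dots,a_k)$, and projecting the eigenvalue equation back onto the kernel. The resulting finite-dimensional problem takes the form $\mathcal M\,a = \mu(\ep)\,a$, where $\mathcal M$ is, up to a positive normalising factor, the Hessian at the cluster positions of the reduced energy $F(P_1,\dots,P_k)$ whose critical points give the configurations in Theorem \ref{existenceth}. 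This energy is a sum of a pairwise repulsive interaction (transmitted by $G_D$, hence convex in the spike-separation variables) and a confining term proportional to $-h(P_i)$ coming from the attraction to the boundary maximum of the curvature. Using the nondegenerate maximality of $h$ at $P^0$ for the confining part and the convexity of the repulsive part, I would conclude that $\mathcal M$ is negative definite, yielding $k$ small eigenvalues with strictly negative real parts.

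The main obstacle is the sharp asymptotic computation of $\mathcal M$: the repulsive and attractive contributions live on the very different length scales $\sqrt D$ and $1$, and enter with coefficients of different orders in $\ep$ and $D$, so tracking them through the regime $e^{-1/\sqrt D}\ll\ep\ll\sqrt D$ requires a delicate expansion of $v_\ep$ combining the inner and outer Green's function asymptotics. The nondegeneracy of $P^0$ is what forces the confining block of $\mathcal M$ to dominate in the common tangential direction shared by all $k$ spikes, while the convexity of the interaction handles the remaining $k-1$ relative-position directions; together these two ingredients are precisely what makes the cluster linearly stable here, in analogy with the one-dimensional consumer-chain and precursor-GM clusters treated in \cite{WW7,WW8}.
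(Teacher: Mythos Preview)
Your overall architecture---splitting into large and small eigenvalues, deriving an NLEP limit for the large ones, and projecting onto the tangential translation modes for the small ones---is exactly what the paper does. Two corrections and one genuine subtlety are worth flagging.

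For the large eigenvalues, the coupling matrix $\mathcal{B}$ you anticipate is in fact the identity at leading order: since $G_{\sqrt{D}}(\sigma p_i,\sigma p_j)=O\!\left(\frac{\ep D}{\xi_\sigma}\log\frac{\ep D}{\xi_\sigma}\right)$ for $i\neq j$ while the diagonal contribution scales like $\log\frac{1}{\sigma}$, the $k$ NLEPs decouple and each one reads
\[
\Delta \phi_j-\phi_j+2w\phi_j-\frac{2}{1+\tau\lambda}\,\frac{\int_{\R^2_+} w\phi_j}{\int_{\R^2_+} w^2}\,w^2=\lambda_0\phi_j,
\]
posed on the half-plane $\R^2_+$ (not $\R^2$, since these are boundary spikes). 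Stability for $\tau$ small is then immediate from the scalar NLEP theory; no diagonalisation of $\mathcal{B}$ is needed.

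For the small eigenvalues, your Hessian-of-reduced-energy heuristic is morally correct, but the phrase ``$\mathcal{M}$ is negative definite'' hides the crux. The leading-order matrix---coming purely from the pairwise Green's-function interaction and living at scale $\ep^3\log\frac{\xi_\sigma}{\ep D}$---is only positive \emph{semi}-definite: it is the tridiagonal matrix with eigenvalues $n(n+1)$, $n=0,\ldots,k-1$, so it has a one-dimensional kernel spanned by $(1,\ldots,1)^T$. The curvature contribution from $h''(P^0)<0$ enters only at the strictly smaller order $\ep^3$, and it is precisely this next-order term that resolves the zero eigenvalue. Thus the small-eigenvalue analysis is intrinsically two-scale: $k-1$ eigenvalues of order $\ep^3\log\frac{\xi_\sigma}{\ep D}$ from the interaction, and one eigenvalue of order $\ep^3$ from the curvature. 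The paper obtains this last term by a rather delicate expansion (approximating $\partial\Omega$ near each $p_i$ by its osculating circle and computing in polar coordinates), which goes beyond the Hessian picture you sketch. Your final paragraph shows you sense this difficulty, but be aware that ``convexity of the repulsive part plus nondegeneracy of $h$'' is not a single Hessian calculation---it is two calculations at two different orders, and the second one is where the real work lies.
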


\begin{remark}
There are eigenvalues of two different orders:
$n-1$ eigenvalue related to repelling of neighbouring spikes are of order $\ep^3\log \frac{\xi_\sigma}{\ep D}$, and one eigenvalue stemming from the curvature of the boundary
(corresponding to synchronous motion of all spikes) is of order $\ep^3$.
\end{remark}

This paper is organised as follows. In sections 3-5 we show existence of the spike cluster steady state by using Liapunov-Schmidt reduction. In section 3 we introduce an approximation to the spike cluster steady state.  In section 4 we use the Liapunov-Schmidt method to reduce the problem to finite dimensions. In section 5 we solve this reduced problem. In sections 6-7 we study the stability of this spike cluster steady state. In section 6 we consider large eigenvalues. Finally, in section 7 we study small eigenvalues.
In two appendices we show some technical results: in appendix A (section 8) we prove Proposition 4.1 and in appendix B (section 9) we compute the small eigenvalues.

\medskip

\section{Introduction of the approximate solutions}
\setcounter{equation}{0}

In this paper, we consider the following problem:
\begin{equation}\label{e:1}
\left\{\begin{array}{l}
\ve^2\Delta u-u+\frac{u^2}{v}=0 \mbox{ in }\Omega,\\[2mm]
D\Delta v-v+u^2=0 \mbox{ in }\Omega,\\[2mm]
\frac{\partial u}{\partial \nu}=\frac{\partial v}{\partial \nu}=0 \mbox{ on }\partial \Omega,
\end{array}
\right.
\end{equation}
where $\Om\subset \R^2$ is a bounded and smooth two-dimensional domain.

Let $P^0$ be a nondegenerate maximum point of the boundary curvature $h(P)$ on the boundary  of $\Omega$.
 For $P\in \partial\Omega$,
\begin{equation*}
\nabla_{\tau(P)}:=\frac{\partial }{\partial \tau(P)},
\end{equation*}
where $\frac{\partial }{\partial \tau(P)}$ denotes the tangential derivative with respect to $P$ at $P\in \partial \Omega$. We will sometimes drop the variable $P$ if this can be done without causing confusion.

In this section, we construct an approximation to a spike cluster solution  to (\ref{e:1}) which concentrates at $P^0$.

The approximate cluster consists of spikes $\sigma^{-2}\xi_{\sigma,i}w(\frac{x-P_i}{\ve})$ which are centred at the points $P_i$ for $i=1,\cdots,k$, where $\sigma=\frac{\ve}{\sqrt{D}}$ and the amplitude $\xi_{\sigma,i}$ satisfies
\[
\xi_{\sigma,i}\sim \frac{1}{
\frac{1}{\pi}\log\frac{1}{\sigma}
\int_{\R_+^2} w^2\,dx}
\]
(see (\ref{e:7})).

Let $P_1,\cdots,P_k$ be $k$ points distributed along the boundary $\partial \Omega$ such that we have for $i=2,\cdots,k$
\begin{eqnarray}\label{e:2}
\left|\frac{P_i-P_{i-1}}{\sqrt{D}}-\log\frac{\xi_\sigma}{\ve D}+\frac{3}{2}\log\log\frac{\xi_\sigma}{\ve D}+\log(-\frac{\frac{\partial^2}{\partial \tau^2}h(P^0)\nu_1}{2\nu_2})
+\log[(i-1)(k+1-i)]\right|\leq \eta\nonumber\\[2mm]
\end{eqnarray}
and
\begin{equation}\label{e:3}
\left|\frac{1}{k}\sum_{i=1}^kP_i-P^0\right|\leq \eta \sqrt{D} \log \frac{\xi_\sigma}{\ve D},
\end{equation}
where $\nu_2$ is given in (\ref{nu2}) below. Further, $\eta>0$ is a small constant independent of $\ve$ and $D$.
The reason for assuming (\ref{e:2}) and (\ref{e:3}) will become clear in Section 5 when we solve the reduced problem.

\begin{remark}
By (\ref{e:2}), the  distance of neighbouring spikes satisfies
 \[
 |P_i-P_{i-1}|\sim C \sqrt{D}\log \frac{\xi_\sigma}{\ve D}.\]
  Since we want to construct multiple boundary spikes which collapse at one point, we require that assumptions (\ref{smalld}) and (\ref{equal}) hold.
\end{remark}
After re-scaling,
\begin{equation*}
\hat{u}(z)=\sigma^2u(\ve z),
\quad \hat{v}(z)=\sigma^2v(\ve z),
\end{equation*}
if we drop the hat and still denote solutions by $(u,v)$, equation (\ref{e:1}) is equivalent to
\begin{equation}\label{e:4}
\left\{\begin{array}{l}
\Delta u-u+\frac{u^2}{v}=0 \mbox{ in }\Omega_\ve,\\[2mm]
\Delta v-\sigma^2v+u^2=0 \mbox{ in }\Omega_\ve,\\[2mm]
\frac{\partial u}{\partial \nu}=\frac{\partial v}{\partial \nu}=0 \mbox{ on }\partial \Omega_\ve,
\end{array}
\right.
\end{equation}
where $\Omega_\ve=\ve^{-1}\Omega$.

\medskip

From now on we will deal with (\ref{e:4}). Before introducing an approximation to the spike solutions, we first define some notation.

Fixing ${\bf P}^\ve=(P_1,\cdots,P_k)$ such that (\ref{e:2}) and (\ref{e:3}) hold, we set
\begin{equation}\label{e:5}
\Lambda_k=\{{\bf p}^{\ve}=\ve^{-1}{\bf P}^\ve\,: \,P_1,\cdots,P_k \ \mbox{ such that} \ (\ref{e:2}) \mbox{ and } \ (\ref{e:3}) \ \mbox { hold }\}.
\end{equation}

We are looking for multiple spike solutions to (\ref{e:4}) of the form
\begin{equation}\label{e:6}
\left\{\begin{array}{l}
u(z)\sim \sum_{i=1}^k\xi_{\sigma,i}Pw_{p_i}(z-p_i),\\[2mm]
v(p_i)\sim \xi_{\sigma,i},
\end{array}
\right.
\end{equation}
where $Pw_{p_i}(z-p_i)$ is defined to be the unique solution of
\begin{equation}
\label{proj1}
\Delta u-u+w(\cdot-p_i)^2=0 \mbox{ in }\Omega_{\ve,p_i}, \ \frac{\partial u}{\partial \nu}=0 \mbox{ on }\partial \Omega_{\ve,p_i}.
\end{equation}
Here
$\Omega_{\ve,p_i}=\{z\,:\, z+p_i\in\Omega_\ve\}$,
the function $w$ has been defined in (\ref{ground})
 and $\xi_{\sigma,i}, \ i=1,\cdots,k$ are the heights of the spikes, which will be determined in (\ref{e:7}).

\subsection{The analysis of the projection $Pw_{q}(z-q)$}

Before calculating the heights of the spikes, we need some preliminaries of the projection $Pw_{p_i}(z-p_i)$ defined in (\ref{proj1}) which are rather standard by now. Some of these results have been derived in \cite{WW,WW1}.

Let $P\in \partial \Omega$. We define a diffeomorphism straightening the boundary. We may assume that the inward normal to $\partial \Omega$ at $P$ is pointing in the direction of the positive $x_2$ axis. Denote $B'({R})=\{x\in \R^2||x_1|\leq {R}\}$.
 Then since $\partial \Omega$ is smooth, we can find a constant $R$ such that $\partial \Omega$ can be represented by the graph of a smooth function $\rho_P:B'(R)\to \R$, where $\rho_P(0)=0$, and $\rho_P'(0)=0$. From now on, we omit the use of $P$ in $\rho_P$ and write $\rho$ if this can be done without causing confusion. So near $P$, $\partial \Omega$ can be represented by $(x_1, \rho(x_1))$. The curvature of $\partial \Omega$ at $p$ is
$h(P)=\rho''(0)$.
 Let $\Omega_{1}=\Omega\cap B(P,
{R})=\{(x_1,x_2)\in B(P,{R})| x_2-P_2>\rho(x_1-P_1)\}$, where $B(P,
R)=\{x\in \R^2| |x-P|<R\}$.

After rescaling, it follows that near $p=\frac{P}{\ve}$, the boundary $\partial \Omega_\ve$ can be represented by $(z_1-{\bf p}_1, \ve^{-1}\rho(\ve (z_1-{\bf p}_1)))$, where $(z_1,z_2)=\ve^{-1}(x_1,x_2)$ and $p=({\bf p}_1, {\bf p}_2)$.
By Taylor expansion, we have
\begin{equation}
\label{taylor}
\ve^{-1}\rho(\ve (z_1-{\bf p}_1))=\frac{1}{2}\rho''(0)\ve (z_1-{\bf p}_1)^2+\frac{1}{6}\rho^{(3)}(0)\ve^2 (z_1-{\bf p}_1)^3+O(\ve^3 (z_1-{\bf p}_1)^4).
\end{equation}

Let $h_p(z)=w(z-p)-Pw_p(z-p)$. Then $h_p$ satisfies
\begin{equation}
\left\{\begin{array}{l}
\Delta h_p(z)-h_p(z)=0, \mbox{ in }\Omega_\ve,\\[2mm]
\frac{\partial h_p}{\partial \nu}=\frac{\partial }{\partial \nu}w(z-p) \mbox{ on }\partial \Omega_\ve.
\end{array}
\right.
\end{equation}
For $z\in \Omega_{1\ve}=\frac{1}{\ve}\Omega_1$ we set
\begin{equation}\label{y}
\left\{\begin{array}{l}
y_1=z_1-{\bf p}_1,\\[2mm]
y_2=z_2-{\bf p}_2-\ve^{-1}\rho(\ve (z_1-{\bf p}_1)).
\end{array}
\right.
\end{equation}

Under this transformation, the Laplace operator and the boundary derivative operator become
\begin{eqnarray*}
&&\Delta _z=\Delta_y+(\rho'(\ve y_1))^2\partial_{y_2y_2}-2\rho'(\ve y_1)\partial_{y_1y_2}-\ve\rho''(\ve y_1)\partial_{y_2},\\[2mm]
&&(1+\rho'(\ve y_1)^2)^{\frac{1}{2}}\frac{\partial }{\partial \nu}=\rho'(\ve y_1)\partial_{y_1}-(1+{\rho'}(\ve y_1)^2)\partial_{y_2}.
\end{eqnarray*}

Let $v^{(1)}$ be the unique solution of
\begin{equation}
\left\{\begin{array}{l}
\Delta v-v=0 \mbox{ in }\R^2_+,\\[2mm]
\frac{\partial v}{\partial y_2}=\frac{w'}{|y|}\frac{\rho''(0)}{2}y_1^2 \mbox{ on }\partial \R^2_+,
\end{array}
\right.
\end{equation}
where $\R^2_+$ is the upper half plane, namely
$\R^2_+=\{y=(y_1,y_2)\in \R^2 | y_2>0\}.$

 Let $v^{(2)}$ be the unique solution of
\begin{equation*}
\left\{\begin{array}{l}
\Delta v-v-2\rho''(0)y_1\frac{\partial^2 v^{(1)}}{\partial_{y_1}\partial_{y_2}}=0  \mbox{ in }\R^2_+,\\[2mm]
\frac{\partial v}{\partial y_2}=-\rho''(0)y_1\frac{\partial v^{(1)}}{\partial y_1} \mbox{ on }\partial \R^2_+.
\end{array}
\right.
\end{equation*}
Let $v^{(3)}$ be the unique solution of
\begin{equation}
\label{v3equ}
\left\{\begin{array}{l}
\Delta v-v=0\mbox{ in }\R^2_+,\\[2mm]
\frac{\partial v}{\partial y_2}=\frac{w'}{|y|}\frac{1}{3}\rho^{(3)}(0)y_1^3
 \mbox{ on }\partial \R^2_+.
\end{array}
\right.
\end{equation}

Note that $v^{(1)}, v^{(2)}$ are even functions in $y_1$ and $v^{(3)}$ is an odd function in $y_1$. Moreover, it is easy to see that $|v_i(y)|\leq Ce^{-\mu|y|}$ for any $0<\mu<1$. Let $\chi$ be a smooth cut-off function, such that $\chi(a)=1$ for $a\in B(0,R_0\sqrt{D}\log\frac{\xi_\sigma}{\ve D})$, and $\chi(a)=0$ for $x\in B(0,2R_0\sqrt{D}\log\frac{\xi_\sigma}{\ve D})^c$
for some
suitable $R_0$ such that $|p_i-p^0|<\frac{R_0}{\sigma}
\log\frac{\xi_\sigma}{\ep D}$, and \begin{equation}
\label{chive}
\chi_\ve(z-p)=\chi(\ve (z-p))
\quad \mbox{ for }
z\in \Omega_\ve.
\end{equation}
Set
\begin{equation}
h_p(z)=-(\ve v^{(1)}(y)+\ve^2(v^{(2)}(y)+v^{(3)}(y)))\chi_\ve(z-p)+\ve^3\xi_p(z), \ z\in \Omega_\ve.
\end{equation}
Then we have the following estimate:
\begin{proposition}\label{pro1}
\begin{equation}
\|\xi_p(z)\|_{H^1(\Omega_\ve)}\leq C.
\end{equation}
\end{proposition}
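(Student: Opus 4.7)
The plan is to substitute the ansatz
\begin{equation*}
h_p(z) = -\bigl[\ve v^{(1)}(y) + \ve^2\bigl(v^{(2)}(y)+v^{(3)}(y)\bigr)\bigr]\chi_\ve(z-p) + \ve^3 \xi_p(z)
\end{equation*}
into the boundary value problem satisfied by $h_p$, namely $\Delta h_p - h_p = 0$ in $\Om_\ve$ and $\partial_\nu h_p = \partial_\nu w(z-p)$ on $\partial \Om_\ve$, and so reduce the estimate to a standard energy bound for an elliptic problem satisfied by $\xi_p$. Using the formulas for $\Delta_z$ and $\partial_\nu$ in the straightened coordinates $y$ given just after (\ref{y}), together with the Taylor expansion (\ref{taylor}) of $\rho$ near $0$, the defining equations and boundary data of $v^{(1)}, v^{(2)}, v^{(3)}$ are precisely what is needed to cancel the successive orders of $\ve$ in the interior equation and in the Neumann boundary condition. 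Concretely: the boundary datum of $v^{(1)}$ cancels the leading $\ve$-order contribution to $\partial_\nu w$ coming from the $\tfrac{1}{2}\rho''(0)\ve y_1^2$ term of (\ref{taylor}); the boundary datum of $v^{(3)}$ cancels the $\ve^2$-order contribution from the odd $\tfrac{1}{6}\rho^{(3)}(0)\ve^2 y_1^3$ term; and the interior equation (\ref{v3equ})-type equation for $v^{(2)}$, together with its boundary condition, absorbs the $\ve^2$ cross-terms produced when the non-flat corrections in $\Delta_z$ and in $\partial_\nu$ act on $v^{(1)}$.

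After all these cancellations, $\xi_p$ satisfies
\begin{equation*}
\Delta \xi_p - \xi_p = F_p \text{ in } \Om_\ve, \qquad \partial_\nu \xi_p = G_p \text{ on } \partial \Om_\ve,
\end{equation*}
where $F_p$ and $G_p$ collect the $\ve^3$-remainders of the Taylor expansion of $\rho$, the contributions from differentiating $\chi_\ve$, and the remaining higher-order cross-terms. The key point is that every such contribution either carries a genuine $\ve^3$ prefactor multiplied by a function with exponential decay in $|y|$ (using the bound $|v^{(i)}(y)|\le Ce^{-\mu|y|}$ and the analogous decay of $\nabla w$), or is supported where $|y|\sim R_0\sqrt{D}\,\ve^{-1}\log(\xi_\sigma/(\ve D)) \gg 1$ — the annular support of $\nabla\chi_\ve$ — where the exponential factor already dominates any polynomial loss. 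Using the change-of-variables Jacobian $1+O(\ve)$, one obtains $\|F_p\|_{L^2(\Om_\ve)}+\|G_p\|_{L^2(\partial\Om_\ve)}\le C$ uniformly in $\ve$.

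The proposition then follows from a standard energy estimate: multiplying the equation for $\xi_p$ by $\xi_p$ and integrating by parts yields
\begin{equation*}
\int_{\Om_\ve}\bigl(|\nabla \xi_p|^2 + \xi_p^2\bigr)\,dz \;=\; -\int_{\Om_\ve} F_p\,\xi_p\,dz + \int_{\partial \Om_\ve} G_p\,\xi_p\,d\sigma,
\end{equation*}
after which Cauchy--Schwarz on the volume term and the trace inequality on $\Om_\ve$ for the boundary term give $\|\xi_p\|_{H^1(\Om_\ve)}\le C$.

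The main obstacle is the bookkeeping in the first two steps. One must verify that every term of order $\ve$ and $\ve^2$ in both the interior equation and the Neumann boundary condition is matched exactly by the prescribed data for the correctors $v^{(1)}, v^{(2)}, v^{(3)}$. In practice this requires Taylor-expanding $\rho, \rho', \rho''$ to the appropriate order and tracking the parities in $y_1$, which explains why $v^{(1)}$ and $v^{(2)}$ are taken even in $y_1$ while $v^{(3)}$ is odd. A second point requiring care is the cutoff contribution, for which one exploits the exponential decay of the $v^{(i)}$ together with the fact that $\mathrm{supp}(\nabla\chi_\ve)$ lies far from $p$, so those terms are not merely $O(\ve^3)$ but in fact super-polynomially small in $\ve$. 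Once this bookkeeping is in place, the energy estimate itself is routine.
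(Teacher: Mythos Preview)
Your proposal is correct and follows exactly the method the paper indicates: the paper does not give its own proof but cites \cite{WW1}, summarising the argument as ``Taylor expansion including a rigorous bound for the remainder using estimates for elliptic partial differential equations,'' which is precisely the scheme you outline (matching the $\ve$- and $\ve^2$-orders via $v^{(1)},v^{(2)},v^{(3)}$, then applying an $H^1$ energy estimate to the residual problem for $\xi_p$). One minor slip: the cross-term corrector with an inhomogeneous interior equation is $v^{(2)}$, not the one defined by (\ref{v3equ}), but your description of its role is right.
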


Proposition \ref{pro1} was proved in \cite{WW1} by Taylor expansion including a rigorous bound for the remainder using estimates for elliptic partial differential equations. Moreover, it has been shown that that $|\xi_p(z)|\leq Ce^{-\mu|z-p|}$ for any $0<\mu<1$.

Similarly we know from \cite{WW1} that
\begin{proposition}\label{pro2}
\begin{equation}
[\frac{\partial w}{\partial \tau(p)}-\frac{\partial p_{\Omega_\ve}w}{\partial \tau(p)}](z-p)=\ve \eta(y)\chi_\ve(z-p)+\ve^2\eta_1(z), \ z\in \Omega_\ve,
\end{equation}
where $\eta$ is the unique solution of the following equation:
\begin{equation}
\left\{\begin{array}{l}
\Delta \eta-\eta=0 \mbox{ in }\R^2_+,\\[2mm]
\frac{\partial \eta}{\partial y_2}=-\frac{1}{2}(\frac{w''}{|y|^2}-\frac{w'}{|y|^3})\rho''(0)y_1^3-\frac{w'}{|y|}\rho''(0)y_1 \mbox{ on }\partial R^2_+.
\end{array}
\right.
\end{equation}
Moreover,
\begin{equation}
\|\eta_1\|_{H^1(\Omega_\ve)}\leq C.
\end{equation}
\end{proposition}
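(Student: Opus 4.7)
The plan is to mirror the proof of Proposition~\ref{pro1} given in \cite{WW1}, applied to the tangentially differentiated projection rather than to the projection itself. Set
\[
g_p(z):=\left[\frac{\partial w}{\partial\tau(p)}-\frac{\partial Pw_p}{\partial\tau(p)}\right](z-p).
\]
Since the domain $\Omega_\ve$ does not depend on $p$ and tangential differentiation in $p$ applied to (\ref{proj1}) produces the same forcing $2w(z-p)\partial_{\tau(p)}w(z-p)$ as when applied to the equation $(\Delta-1)w(z-p)=-w(z-p)^2$, the function $g_p$ satisfies the linear homogeneous equation $\Delta g_p-g_p=0$ in $\Omega_\ve$ together with the Neumann condition
\[
\frac{\partial g_p}{\partial\nu}=\frac{\partial}{\partial\nu}\!\left(\frac{\partial w}{\partial\tau(p)}(z-p)\right)\quad\mbox{on}\ \partial\Omega_\ve.
\]
Thus the content of the statement reduces to an asymptotic expansion of this boundary datum in $\ve$ together with a uniform $H^1$ estimate on the remainder.

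Using the boundary-straightening diffeomorphism (\ref{y}) and the expansion $\rho(\ve y_1)=\tfrac{1}{2}\rho''(0)\ve^2y_1^2+O(\ve^3)$, I would Taylor expand the Neumann datum around the flat half-plane. The zeroth-order contribution vanishes identically on $\{y_2=0\}$ because, $w$ being radial,
\[
\partial_{y_2}\bigl(\partial_{y_1}w(|y|)\bigr)=y_1y_2\!\left(\frac{w''}{|y|^2}-\frac{w'}{|y|^3}\right)
\]
is odd in $y_2$. The first surviving term is of order $\ve$; after collecting the contributions from the expansion of the outer normal and from the change of variables (\ref{y}), it reduces precisely to
\[
-\tfrac{1}{2}\!\left(\tfrac{w''}{|y|^2}-\tfrac{w'}{|y|^3}\right)\rho''(0)\,y_1^3-\tfrac{w'}{|y|}\rho''(0)\,y_1,
\]
which is the boundary datum for $\eta$ stated in the proposition. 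As an internal consistency check, $\eta$ agrees (up to sign) with $\partial_{y_1}v^{(1)}$, which is exactly what one obtains by formally differentiating the expansion of Proposition~\ref{pro1} in the tangential direction.

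With $\eta$ identified, I would write $g_p(z)=\ve\,\eta(y)\chi_\ve(z-p)+\ve^2\eta_1(z)$ and substitute into the problem for $g_p$. By construction the leading-order terms cancel, and $\eta_1$ satisfies a linear Neumann problem $(\Delta-1)\eta_1=f_\ve$ in $\Omega_\ve$, $\partial\eta_1/\partial\nu=g_\ve$ on $\partial\Omega_\ve$, where the forcings $f_\ve,g_\ve$ have three sources: (i) genuine Taylor remainders of the Neumann datum, which after extracting $\ve^2$ are $O(1)$ with exponential spatial decay inherited from $w'$; (ii) the action of the curvature-generated cross terms $\rho'(\ve y_1)^2\partial_{y_2}^2-2\rho'(\ve y_1)\partial_{y_1y_2}-\ve\rho''(\ve y_1)\partial_{y_2}$ of the transformed Laplacian on $\eta(y)$, which are $O(1)$ and inherit the exponential decay of $\eta$; and (iii) commutator terms coming from $\nabla\chi_\ve$, which are exponentially small in $\ve$ since $\chi_\ve$ varies on the scale $\sqrt{D}\log(\xi_\sigma/\ve D)\gg1$ while $\eta$ decays like $e^{-\mu|y|}$. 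Standard energy estimates for $-\Delta+1$ with Neumann data then yield $\|\eta_1\|_{H^1(\Omega_\ve)}\le C$.

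The main technical obstacle is the bookkeeping in steps (ii) and (iii): each cross and commutator term must be split into a genuinely $O(\ve^2)$ piece (absorbed into $\eta_1$) and an exponentially small piece (negligible), and one must check that this decomposition survives the weighted norms used in the reduction of Section~4. This is structurally identical to the accounting carried out in \cite{WW1} for Proposition~\ref{pro1}, so the argument transfers with only notational changes; the only genuinely new ingredient is the identification of the explicit boundary datum for $\eta$ described in the second paragraph.
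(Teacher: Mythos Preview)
Your proposal is correct and takes essentially the same approach as the paper: the paper does not give an independent proof of this proposition but simply states ``Similarly we know from \cite{WW1} that\ldots'', i.e.\ it invokes the same boundary-straightening plus Taylor-expansion argument used for Proposition~\ref{pro1}, applied now to $\partial_{\tau(p)}w$ in place of $w$. Your observation that $\eta=-\partial_{y_1}v^{(1)}$ (consistent with tangential differentiation of the expansion in Proposition~\ref{pro1}) is exactly the shortcut one expects, and the remainder bookkeeping you outline is the standard one from \cite{WW1}.
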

It follows that $\eta(y)$ is an odd function in $y_1$. It can be seen that $|\eta_1(y)|\leq Ce^{-\mu|y|}$ for some $0<\mu<1$.

Finally, let
\begin{equation}
L_0=\Delta-1+2w(z).
\end{equation}
We have
\begin{lemma}\label{lem1}
\begin{equation}
Ker(L_0)\cap H^2_N(\R^2_+)=span\{\frac{\partial w}{\partial y_1}\},
\end{equation}
where $H^2_N(\R^2_+)=\{u\in H^2(\R^2_+)\,:\,\frac{\partial u}{\partial y_2}=0 \mbox{ on }\partial \R^2_+\}$.
\end{lemma}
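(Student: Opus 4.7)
The plan is to reduce the half-space Neumann problem to the well-known full-plane nondegeneracy result by means of an even reflection across $\partial\mathbb{R}^2_+$.

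First, I would invoke the classical nondegeneracy result for the ground state $w$: on the whole plane,
\[
\ker\bigl(\Delta - 1 + 2w\bigr)\cap H^2(\mathbb{R}^2) \;=\; \mathrm{span}\Bigl\{\tfrac{\partial w}{\partial y_1},\,\tfrac{\partial w}{\partial y_2}\Bigr\},
\]
a fact established, e.g., in \cite{NT} using the ODE structure of the radial profile together with a spherical harmonic decomposition. This will serve as the black box.

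Next, given $\phi\in H^2_N(\mathbb{R}^2_+)$ with $L_0\phi=0$, I would form its even extension
\[
\tilde\phi(y_1,y_2):=\phi(y_1,|y_2|).
\]
The Neumann condition $\partial\phi/\partial y_2=0$ on $\{y_2=0\}$ is precisely what guarantees that $\tilde\phi$ is in $H^2(\mathbb{R}^2)$ (with no distributional mass arising from the reflection across the boundary). Because $w$ is radially symmetric, hence even in $y_2$, one has $2w\,\tilde\phi$ equals the even extension of $2w\phi$, and therefore $L_0\tilde\phi=0$ in the distributional and hence classical sense on all of $\mathbb{R}^2$.

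Applying the full-plane nondegeneracy to $\tilde\phi$ yields constants $a,b\in\mathbb{R}$ with
\[
\tilde\phi \;=\; a\,\tfrac{\partial w}{\partial y_1}+b\,\tfrac{\partial w}{\partial y_2}.
\]
By construction $\tilde\phi$ is even in $y_2$, while $\partial w/\partial y_2$ is odd in $y_2$ (again by radial symmetry of $w$), forcing $b=0$. Restricting back to $\mathbb{R}^2_+$ gives $\phi=a\,\partial w/\partial y_1$, which is the desired conclusion. The converse inclusion is immediate since $\partial w/\partial y_1$ is even in $y_2$, hence satisfies the Neumann condition, and lies in $\ker L_0$ by translation invariance of the equation $\Delta w - w + w^2 = 0$.

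The only step requiring care is the $H^2$-regularity of the reflected function $\tilde\phi$, which is routine but worth stating cleanly; everything else is symmetry bookkeeping. No additional estimates are needed beyond the cited full-plane kernel computation.
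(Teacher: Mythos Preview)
Your argument is correct: the even reflection together with the full-plane nondegeneracy of $w$ is exactly the right mechanism, and the parity considerations cleanly isolate $\partial w/\partial y_1$. The paper itself does not give an independent proof but simply cites Lemma~4.2 of \cite{NT1}; your write-up is therefore more self-contained than the paper's treatment while following the same underlying idea.
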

\begin{proof}
See Lemma 4.2 in \cite{NT1}.
\end{proof}

\begin{remark}
In the following sections, we will denote by $y^i=(y^i_1,y^i_2)$ the transformation defined by (\ref{y}) centred at the point $p_i$ and let $v_i^{(j)}$ be the corresponding solutions in the expansion of $h_{p_i}$.
\end{remark}

\subsection{The analysis of the Green's function}
Next we introduce a Green's function $G_{\sqrt{D}}$ which is needed to derive our main results.

For $D>0$, let $G_D(x,y)$ be the Green's function given by
\begin{equation}
\left\{\begin{array}{l}
-\Delta G_{\sqrt{D}}+G_{\sqrt{D}}=\delta_y \mbox{ in }\Omega_{\sqrt{D}},\\[2mm]
\frac{\partial G_{\sqrt{D}}}{\partial \nu}=0 \mbox{ on }\partial \Omega_{\sqrt{D}},
\end{array}
\right.
\end{equation}
where $y\in \partial \Omega_{\sqrt{D}}$ and let $G_0$ be the Green's function of the upper half plane:
\begin{equation}
-\Delta G_0+G_0=\delta_0 \mbox{ in }\R^2_+,\\[2mm]
\frac{\partial G_0}{\partial y_2}=0 \mbox{ on }\partial \R^2_+.
\end{equation}

Then $H(x)=G_{\sqrt{D}}(x)-G_0(x)$ will satisfy
\begin{equation}
\left\{\begin{array}{l}
\Delta H-H=0 \mbox{ in }\Omega_{\sqrt{D}}, \\[2mm]
\frac{\partial H}{\partial \nu}=-\frac{\partial G_0}{\partial \nu} \mbox{ on }\partial \Omega_{\sqrt{D}}.
\end{array}
\right.
\end{equation}
Let $\eta_1$ be the solution of
\begin{equation}
\left\{\begin{array}{l}
\Delta \eta_1-\eta_1=0 \mbox{ in }\R^2_+,\\[2mm]
\frac{\partial \eta_1}{\partial y_2}=-\frac{1}{2}\sqrt{D}\frac{G_0'(|y|)}{|y|}\rho''(0)y_1^2,
\end{array}
\right.
\end{equation}
and let $\eta_2$ be the solution of
\begin{equation}
\left\{\begin{array}{l}
\Delta \eta_2-\eta_2=0 \mbox{ in }\R^2_+,\\[2mm]
\frac{\partial \eta_2}{\partial y_2}=D(-\frac{1}{3}\frac{G'_0(|y|)}{|y|}\rho^{(3)}(0)y_1^3)\mbox{ on }\partial \R^2_+.
\end{array}
\right.
\end{equation}
 It can be seen easily that $\eta_1$ is even in $y_1$ and $\eta_2$ is odd in $y_1$.
Then one can get the following result.
\begin{lemma}\label{lemma3}
\begin{equation}
G_{\sqrt{D}}(x,p)=G_0(x,p)+\sqrt{D}\eta_1(y)\chi_{\sqrt{D}}(x-p)+D\eta_2(y)\chi_{\sqrt{D}}(x-p)+O(D^{\frac{3}{2}}).
\end{equation}
\end{lemma}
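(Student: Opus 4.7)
The plan is to expand $H(x) := G_{\sqrt{D}}(x,p) - G_0(x-p)$ asymptotically in powers of $\sqrt{D}$, in direct parallel with the projection expansions of Propositions \ref{pro1} and \ref{pro2}. Since the two Green's functions share the same $\delta_p$ source, their difference satisfies the homogeneous equation $\Delta H - H = 0$ in $\Omega_{\sqrt{D}}$, while the Neumann condition for $G_{\sqrt{D}}$ yields $\partial H/\partial\nu = -\partial G_0/\partial\nu$ on $\partial\Omega_{\sqrt{D}}$. This boundary datum is the sole source of forcing.

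The main computation is a Taylor expansion of the boundary datum in the straightening coordinates $(y_1,y_2)$ from (\ref{y}) with $\sqrt{D}$ in place of $\ve$. In the translated coordinates $\tilde y = z-p$, $G_0(\tilde y) = \tfrac{1}{\pi}K_0(|\tilde y|)$ is radial, while on $\partial\Omega_{\sqrt{D}}$ (i.e.\ $y_2 = 0$) one has $\tilde y = (y_1,\,\sqrt{D}^{-1}\rho(\sqrt{D}y_1))$. Expressing $\partial/\partial\nu$ in $\tilde y$-coordinates as $\sqrt{1+\rho'^2}\,\partial/\partial\nu = \rho'\partial_{\tilde y_1} - \partial_{\tilde y_2}$ and substituting $\rho'(\sqrt{D}y_1) = \rho''(0)\sqrt{D}\,y_1 + \tfrac{1}{2}\rho^{(3)}(0) D\,y_1^2 + O(D^{3/2})$ gives
\[
-\frac{\partial G_0}{\partial\nu}\bigg|_{\partial\Omega_{\sqrt{D}}} = \sqrt{D}\,B_1(y_1) + D\,B_2(y_1) + O(D^{3/2}),
\]
with $B_1$ even in $y_1$ and proportional to $\rho''(0)\,y_1^2\,\tfrac{G_0'(|y|)}{|y|}$, and $B_2$ odd and proportional to $\rho^{(3)}(0)\,y_1^3 \,\tfrac{G_0'(|y|)}{|y|}$---precisely the Neumann data defining $\eta_1,\eta_2$ in the statement. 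Both $\eta_i$ exist, are unique, and decay exponentially by standard theory for $-\Delta + 1$ on $\R^2_+$, and their parities in $y_1$ are inherited from their boundary data.

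Setting $H_{\rm app}(x) := (\sqrt{D}\,\eta_1(y) + D\,\eta_2(y))\,\chi_{\sqrt{D}}(x-p)$ and $R := H - H_{\rm app}$, a direct calculation organized by powers of $\sqrt{D}$ and by parity in $y_1$ shows that $R$ solves $\Delta R - R = f_1$ in $\Omega_{\sqrt{D}}$ and $\partial R/\partial\nu = f_2$ on $\partial\Omega_{\sqrt{D}}$ with $\|f_1\|_{L^2} + \|f_2\|_{H^{1/2}} = O(D^{3/2})$: the curved-Laplacian corrections $(\rho')^2\partial_{y_2y_2}$, $-2\rho'\partial_{y_1y_2}$ and $-\sqrt{D}\rho''\partial_{y_2}$ applied to $H_{\rm app}$ are of order $O(D^{3/2})$ after the matching construction of $\eta_1,\eta_2$, the cut-off errors are exponentially small by the decay of the $\eta_i$, and the leading boundary terms cancel $\sqrt{D}B_1 + D\,B_2$ by construction. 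Standard energy estimates for $-\Delta+1$ with Neumann BC on $\Omega_{\sqrt{D}}$, followed by elliptic regularity, then give $\|R\|_{L^\infty(\Omega_{\sqrt{D}})} = O(D^{3/2})$, proving the lemma. The principal obstacle is the lengthy bookkeeping of cross-terms from expanding both the curved Laplacian and the curved normal derivative to the required order while tracking parities; since this is exactly the same calculation as in \cite{WW1} for the expansions of $h_p$ and $\partial h_p/\partial\tau$---with $w$ there replaced by $G_0$ here---the argument transfers mutatis mutandis.
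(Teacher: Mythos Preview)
Your proposal is correct and follows essentially the same approach as the paper: both identify $H=G_{\sqrt{D}}-G_0$ as the solution of a homogeneous Helmholtz equation with Neumann datum $-\partial G_0/\partial\nu$ on $\partial\Omega_{\sqrt{D}}$, and expand that datum in powers of $\sqrt{D}$ via the boundary Taylor expansion to match the defining boundary conditions of $\eta_1,\eta_2$. The paper's proof in fact stops after the boundary computation and asserts the expansion; your treatment of the remainder $R=H-H_{\rm app}$ via energy and elliptic estimates, with explicit reference to the analogous argument for $h_p$ in \cite{WW1}, is more complete than what the paper writes but not a different method.
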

\begin{proof}
First we compute on $\partial \Omega_{\sqrt{D}}$,
\begin{eqnarray*}
&&\sqrt{1+\rho'(\sqrt{D}x_1)^2}\frac{\partial }{\partial \nu}G_0(x)\\[2mm]
&&=\frac{G_0'(|x|)}{|x|}(x_2-x_1\rho'(\sqrt{D}x_1))\\[2mm]
&&=\frac{G_0'(|x|)}{|x|}(-\frac{1}{2}\sqrt{D}\rho''(0)y_1^2-\frac{1}{3}D\rho^{(3)}(0)y_1^3)+O(D^{\frac{3}{2}}e^{-l|y|}),
\end{eqnarray*}
for any $0<l<1$.

Since we have
\begin{eqnarray*}
&&\frac{G_0'(|x|)}{|x|}=\frac{G_0'(|y|)}{|y|}+D\frac{|y|G_0''(|y|)-G_0'(|y|)}{8|y|^3}(\rho''(0)y_1^2)^2+O(D^{\frac{3}{2}}e^{-l|y|}),
\end{eqnarray*}
we get
\begin{eqnarray*}
&&\sqrt{1+\rho'(\sqrt{D}x_1)^2}\frac{\partial }{\partial \nu}G_0(x)\\[2mm]
&&=-\frac{1}{2}\sqrt{D}\frac{G_0'(|y|)}{|y|}\rho''(0)y_1^2-\frac{1}{3}D\frac{G_0'(|y|)}{|y|}\rho^{(3)}(0)y_1^3+O(D^{\frac{3}{2}}e^{-l|y|}).
\end{eqnarray*}
From the expansion above, we can get the asymptotic behaviour of $G_{\sqrt{D}}$.

\end{proof}
Next we have the following expansion of $G_0$:
\begin{lemma}
The following expansion of $G_0$ holds:
\begin{equation*}
G_0(r)=-\frac{1}{\pi}\log r+c_1+c_2r^2\log r+\psi(r),
\end{equation*}
for $0<r<1$, where $\psi$ is a smooth function with $\psi(0)=\psi'(0)=0$ and $c_1, c_2$ are universal constants.
\end{lemma}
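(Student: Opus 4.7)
The plan is to identify $G_0$ explicitly as a multiple of the modified Bessel function $K_0$, and then read off the expansion from the classical power series of $K_0$.

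First I would show that
\[
G_0(x) \;=\; \tfrac{1}{\pi}\, K_0(|x|),
\]
where $K_0$ is the modified Bessel function of the second kind of order zero. The free-space fundamental solution of $-\Delta+1$ on $\R^2$ is $\tfrac{1}{2\pi}K_0(|x|)$; because this function is radial, its $y_2$-derivative vanishes identically on $\{y_2=0\}$, so the Neumann condition is automatic. The coefficient $\tfrac{1}{\pi}$ (rather than $\tfrac{1}{2\pi}$) is the standard method-of-images adjustment when the Dirac source sits on the Neumann boundary: by Green's identity on $\R^2_+\setminus B_\ve(0)$, the flat-boundary term vanishes (since $\partial_\nu G_0 = 0$ and test functions are taken with $\partial_\nu\phi = 0$ on $\partial\R^2_+$), while the inner half-circle contributes the unit mass at the origin in the limit $\ve\to 0$, using $K_0(r)\sim-\log r$ and $K_0'(r)=-K_1(r)\sim-1/r$.

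Second I would invoke the classical expansion
\[
K_0(r) \;=\; -\bigl(\gamma+\log(r/2)\bigr)\,I_0(r) \;+\; \sum_{k=1}^\infty \frac{(r/2)^{2k}}{(k!)^2}\Bigl(1+\tfrac{1}{2}+\cdots+\tfrac{1}{k}\Bigr),
\]
where $I_0(r)=\sum_{k\ge 0}(r/2)^{2k}/(k!)^2$ is entire and $\gamma$ is the Euler--Mascheroni constant. Substituting $I_0(r)=1+r^2/4+O(r^4)$ and separating the singular and regular contributions through order $r^2\log r$ yields
\[
K_0(r) \;=\; -\log r \;+\; (\log 2-\gamma) \;-\; \tfrac{r^2}{4}\log r \;+\; \tilde\psi(r),
\]
where $\tilde\psi(r)=O(r^2)$ with $\tilde\psi'(0)=0$. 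Dividing by $\pi$ gives the stated expansion with $c_1=(\log 2-\gamma)/\pi$, $c_2=-1/(4\pi)$, and $\psi:=\tilde\psi/\pi$, so that $\psi(0)=\psi'(0)=0$.

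No step presents a real obstacle. The only minor subtlety is the exact meaning of ``smooth'' for $\psi$: the tail of the series introduces further terms of the form $r^{2k}\log r$ for $k\ge 2$, so $\psi$ is of class $C^{m}$ for every finite $m$ (each additional term peeled off raises the regularity) rather than $C^\infty$ in the strict sense. This level of smoothness is more than sufficient for the use made of the expansion in Lemma~\ref{lemma3} and in the subsequent asymptotic analysis of $G_{\sqrt{D}}$ near the diagonal.
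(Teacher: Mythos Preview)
Your proposal is correct and follows essentially the same route as the paper: the paper simply observes that by even extension in $y_2$ one obtains the full-space fundamental solution of $-\Delta+1$ in $\R^2$ and then cites Lemma~4.1 of \cite{DKW} for the small-$r$ expansion, whereas you make this explicit by identifying the fundamental solution with $\tfrac{1}{2\pi}K_0(|\cdot|)$ (hence $G_0=\tfrac{1}{\pi}K_0$ after the reflection) and reading off the expansion from the classical series for $K_0$. Your version is self-contained and even supplies the values $c_1=(\log 2-\gamma)/\pi$, $c_2=-1/(4\pi)$; your remark that the remainder $\psi$ retains higher-order $r^{2k}\log r$ terms and is therefore only finitely differentiable at $0$, rather than $C^\infty$, is a valid refinement of the paper's informal use of ``smooth'' and is harmless for the subsequent analysis.
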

\begin{proof}
By an even extension in $y_2$, one can get the Green's function in the whole space $\R^2$. For the expansion of fundamental solution, see Lemma 4.1 in \cite{DKW}. Then the above expansion follows.

\end{proof}

We set
\begin{equation}
G_{\sqrt{D}}(x,y)=\frac{1}{\pi}\log\frac{1}{|x-y|}+\tilde{H}(x,y).
\end{equation}

From the estimates above, and for points ${\bf p}^\ve\in \Lambda_k$, we have
\begin{equation}\label{e:10}
G_{\sqrt{D}}(\sigma p_i,\sigma p_j)=O(\frac{\ve D}{\xi_\sigma}\log \frac{\ve D}{\xi_\sigma}) \mbox{ for } |i-j|=1,
\end{equation}
\begin{equation}
G_{\sqrt{D}}(\sigma p_i,\sigma p_j)=O((\frac{\ve D}{\xi_\sigma}\log \frac{\ve D}{\xi_\sigma})^2)\mbox{ for } |i-j|= 2.
\end{equation}
Generally, we have
\begin{equation}
G_{\sqrt{D}}(\sigma p_i,\sigma p_j)=O((\frac{\ve D}{\xi_\sigma}\log \frac{\ve D}{\xi_\sigma})^{|i-j|})\mbox{ for } |i-j|\geq 1.
\end{equation}
For the derivatives, we estimate
\begin{equation}\label{e:11}
\frac{\partial^l }{\partial p_i^l}G_{\sqrt{D}}(\sigma p_i,\sigma p_j)=O((\frac{\ve D}{\xi_\sigma}\log \frac{\ve D}{\xi_\sigma})^{|i-j|}\sigma^{l})\mbox{ for } |i-j|\geq 1.
\end{equation}

\subsection{Calculating the heights of the spikes}
In this section, we are going to determine the heights of spikes $\xi_{\sigma,i}$ to leading order. In the sequel, by $T[h]$ we denote the unique solution of the equation
\begin{equation}
\left\{\begin{array}{l}
\Delta v-\sigma^2v+h=0 \mbox{ in }\Omega_\ve,\\[2mm]
\frac{\partial v}{\partial \nu}=0 \mbox{ on }\partial \Omega_\ve.
\end{array}
\right.
\end{equation}
Then we know that
\begin{equation}
v(z)=\int_{\Omega_\ve}G_{\sqrt{D}}(\sigma z,\sigma x)h(x)\,dx.
\end{equation}
As mentioned before, we will choose the approximate solution to be
\begin{equation}
U(z)=\sum_{i=1}^k \xi_{\sigma,i}Pw_{p_i}(z-p_i)
\end{equation}
and
\begin{eqnarray}
V(z)&&=T[U^2](z)\\
&&=\int_{\Omega_\ve}G_{\sqrt{D}}
(\sigma z,\sigma x)
\left(\sum_{i=1}^k \xi_{\sigma,i}Pw_{p_i}(x-p_i)\right)^2\,dx.
\nonumber
\end{eqnarray}
First we calculate the heights of the peaks:
\begin{eqnarray*}
V(p_i)&&=\int_{\Omega_\ve}G_{\sqrt{D}}(\sigma p_i,\sigma x)\left(\sum_{i=1}^k\xi_{\sigma,i}Pw_{p_i}(x-p_i)\right)^2\,dx\\
&&=\int_{\Omega_\ve}G_{\sqrt{D}}(\sigma p_i,\sigma x)\left(\sum_{i=1}^k\xi_{\sigma,i}^2
\left(Pw_{p_i}(x-p_i)\right)^2\right)\,dx+O(\ve^4)\\
&&=\xi_{\sigma,i}^2\int_{\Omega_\ve}
\left(\frac{1}{\pi}\log\frac{1}{\sigma|x-p_i|}+\tilde{H}(\sigma x,\sigma p_i)\right)(Pw_{p_i}(x-p_i))^2\,dx\\
&&+\sum_{j\neq i}\xi_{\sigma,j}^2\int_{\Omega_\ve}G_{\sqrt{D}}(\sigma x,\sigma p_i)(Pw_{p_j}(x-p_j))^2\,dx+O(\ve^4)\\
&&=(\frac{1}{\pi}\log\frac{1}{\sigma}\int_{\R^2_+}w^2\,dx)\xi_{\sigma,i}^2+\sum_{j=1}^kO(\xi_{\sigma,j}^2).
\end{eqnarray*}

Thus
\begin{equation}
\xi_{\sigma,i}=(\frac{1}{\pi}\log\frac{1}{\sigma}\int_{\R^2_+}w^2\,dx)\xi_{\sigma,i}^2+\sum_{j=1}^kO(\xi_{\sigma,j}^2).
\end{equation}
We assume that the heights of the spikes are asymptotically equal
as $\ve, D\to 0$, i.e.
\begin{equation}
\lim_{\sigma\to 0}\frac{\xi_{\sigma,i}}{\xi_{\sigma,j}}=1, \mbox{ for }i\neq j.
\end{equation}
Then we get that
\begin{eqnarray}\label{e:7}
\xi_{\sigma,i}&&=
(\frac{1}{\pi}\log\frac{1}{\sigma}
\int_{\R^2_+}w^2\,dx)^{-1}(1+O(\frac{1}{\log \frac{1}{\sigma}}))\nonumber\\
&&=\xi_{\sigma}(1+O(\frac{1}{\log \frac{1}{\sigma}})),
\end{eqnarray}
where
\begin{eqnarray}\label{e:7a}
\xi_{\sigma}=(\frac{1}{\pi}\log\frac{1}{\sigma}
\int_{\R^2_+}w^2\,dx)^{-1}.
\end{eqnarray}

The analysis in this subsection calculates the heights of the spikes under the assumption that their shape is given. In the next two sections, we provide the rigorous proof for the existence.

\section{Existence I: Reduction to finite dimensions}
\setcounter{equation}{0}

Let us start to prove Theorem \ref{existenceth}.

The first step is choosing a good approximate solution which was done in (\ref{e:6}). The second step is using the Liapunov-Schmidt method to reduce the problem to a finite dimensions which we do in this section. The last step is  solving the reduced problem which will be done in Section 5.

First we need to calculate the error terms caused by the approximate solution given in (\ref{e:6}) to show that this is a good choice:\
\begin{eqnarray*}
S_1(U,V)&&=\Delta U-U+\frac{U^2}{V}\\
&&=\frac{U^2}{V}-\sum_{i=1}^k\xi_{\sigma,i}w(x-p_i)^2\\
&&=\frac{\sum_{i=1}^k\xi_{\sigma,i}^2Pw_{p_i}(x-p_i)^2}{V}-\sum_{i=1}^k\xi_{\sigma,i}w(x-p_i)^2+O(\ve^4)\\
&&=\sum_{i=1}^k\xi_{\sigma,i}(Pw_i^2-w_i^2)+\sum_{i=1}^k\xi_{\sigma,i}^2Pw_i^2(\frac{1}{V(x)}-\frac{1}{V(p_i)})+O(\ve^4),
\end{eqnarray*}
where we have used the notation
\begin{equation}
Pw_i(x)=Pw_{p_i}(x-p_i), \ w_i(x)=w(x-p_i).
\end{equation}
On the other hand, we calculate for $x=p_i+z$
\begin{eqnarray*}
Pw_i(x)^2-w_i^2(x)&=&2w(z)(\ve v_i^{(1)}(z)+\ve^2v_i^{(2)}(z)+\ve^2 v_i^{(3)})(z)+\ve^2(v_i^{(1)}(z))^2+O(\ve^3)\\
&:=&\ve R_{1,i}(z)+\ve^2R_{2,i}(z)+O(\ve^3),
\end{eqnarray*}
where $R_{1,i}(z)=2w(z)[v_i^{(1)}(z)+\ve v_i^{(2)}(z)]+\ve (v_{1}^{(i)}(z))^2$, $R_{2,i}=2w(z)v_i^{(3)}(z)$.
This implies
\begin{eqnarray*}
&&V(p_i+z)-V(p_i)\\
&&=\int_{\Omega_\ve}[G_{\sqrt{D}}(\sigma p_i+\sigma z,\sigma x)-G_{\sqrt{D}}(\sigma p_i,\sigma x)](\sum_{i=1}^k\xi_{\sigma,i}Pw_i)^2\,dx\\
&&=\int_{\Omega_\ve}\frac{1}{\pi}\log\frac{|x-p_i|}{|x-z-p_i|}Pw_i^2\xi_{\sigma,i}^2+(\tilde{H}(\sigma p_i+\sigma y,\sigma x)-\tilde{H}(\sigma p_i,\sigma x))Pw_i^2\xi_{\sigma,i}^2\,dx\\
&&+\sum_{j\neq i}\xi_{\sigma,j}^2\int_{\Omega_\ve}[G_{\sqrt{D}}(\sigma p_i+\sigma z,\sigma x)-G_{\sqrt{D}}(\sigma z,\sigma x)]Pw_j^2\,dx+O(\ve^4)\\
&&=\xi_{\sigma,i}^2\int_{\R^2_+}\log\frac{|y|}{|y-z|}w^2(y)(1+o(1))\,dy+\sum_{j\neq i}\xi_{\sigma,j}^2\nabla_{p_i} G_{\sqrt{D}}(\sigma p_i,\sigma p_j)\cdot  z\int_{\R^2_+}w^2\,dy\\
&&+O(\xi_\sigma^2\ve^2h'(\ve p_i))+O(\xi_\sigma^2\ve^3)
+O(\sum_{j\neq i}\xi_{\sigma}^2|\nabla^2_{p_i} G_{\sqrt{D}}(\sigma p_i,\sigma p_j)|)\\
&&:=\xi_{\sigma,i}^2R_1(z)+\xi_{\sigma,i}^2R_2(z)+h.o.t,
\end{eqnarray*}
where $R_1(z)$ is even in $z_1$ and $R_2(z)$ is odd in $z_1$ and
\begin{equation}
R_1(z)=O(\log(1+|z|)), \ R_2(z)=O(\sum_{j\neq i}|\nabla_{p_i} G_{\sqrt{D}}(\sigma p_i,\sigma p_j)||z|)
\end{equation}
and
\begin{eqnarray}
h.o.t&&=O(\xi^2_\sigma \ve^2h'(\ve p_i))+O(\sum_{i\not=j}\xi_\sigma^2|\nabla^2_{p_i} G_{\sqrt{D}}(\sigma p_i,\sigma p_j)|)\\
&&=O(\xi_\sigma^2\ve^2\sqrt{D}\log\frac{\xi_\sigma}{\ve D}).\nonumber
\end{eqnarray}
Thus we can get that
\begin{equation}
\label{4.4}
\frac{1}{V(p_i+z)}-\frac{1}{V(p_i)}=\frac{1}{V(p_i)^2}(-\xi_{\sigma,i}^2R_1(z)-\xi_{\sigma,i}^2R_2(z)+h.o.t).
\end{equation}
By the above estimates, we have the following key estimate:
\begin{lemma}\label{lemma1}
For $x=p_i+z$, $|z|<\frac{R_0}{\sigma}\log \frac{\ve D}{\xi_\sigma}$, we have
\begin{equation}
S_1(U,V)=S_{1,1}+S_{1,2},
\end{equation}
where
\begin{equation}
S_{1,1}=\xi_{\sigma,i}^2\tilde{R}_1(z)
\end{equation}
\begin{equation}
S_{1,2}=-\xi_{\sigma,i}^2w^2(z)R_2(z)+\xi_{\sigma,i}\ve^2 R_{2,i}(z)+h.o.t,
\end{equation}
where $\tilde{R}_1(z)$ is even in $z_1$ with the property that $\tilde{R}_1(z)=O(\log(1+|z|))$, and $R_2(z), R_{2,i}(z)$ are defined above.

Further, $S_1(U,V)=\ve^{\frac{R_0}{\sigma}}$ for $|x-p_i|\geq \frac{R_0}{\sigma}\log \frac{\ve D}{\xi_\sigma}$ for all $i$.

\end{lemma}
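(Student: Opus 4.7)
The plan is to substitute the two expansions derived just before the lemma into
\begin{equation*}
S_1(U,V)=\sum_i\xi_{\sigma,i}(Pw_i^2-w_i^2)+\sum_i\xi_{\sigma,i}^2 Pw_i^2\left(\frac{1}{V(x)}-\frac{1}{V(p_i)}\right)+O(\ve^4),
\end{equation*}
localize to a neighbourhood of a single spike $p_i$, and then sort the resulting contributions by their parity in $z_1$. In the inner region $x=p_i+z$ with $|z|$ below the cutoff radius, the exponential decay of $w$ together with the spike separation $|p_i-p_j|\sim\sigma^{-1}\log(\xi_\sigma/(\ve D))$ makes the summands indexed by $j\neq i$ negligibly small pointwise; their only macroscopically relevant contribution has already been folded into (\ref{4.4}) via the gradient term $\nabla_{p_i}G_{\sqrt D}(\sigma p_i,\sigma p_j)$ and the $h.o.t$ remainder estimated there.

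First, substituting $Pw_i^2-w_i^2=\ve R_{1,i}(z)+\ve^2 R_{2,i}(z)+O(\ve^3)$ and the expansion (\ref{4.4}) of $1/V(p_i+z)-1/V(p_i)$, and using $V(p_i)=\xi_{\sigma,i}(1+o(1))$, the second sum near $p_i$ simplifies to
\begin{equation*}
-\xi_{\sigma,i}^2 w^2(z)\bigl(R_1(z)+R_2(z)\bigr)(1+o(1))+h.o.t.,
\end{equation*}
where $Pw_i^2$ was replaced by $w^2$ at the cost of an $O(\ve)$ relative error absorbed in the remainder. Using the parity statements recorded earlier (even in $y_1$: $v_i^{(1)}$, $v_i^{(2)}$, $R_1$; odd in $y_1$: $v_i^{(3)}$, $R_2$), the functions $R_{1,i}$ and $R_1$ are even in $z_1$, while $R_{2,i}$ and $R_2$ are odd.

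Second, I collect the even-in-$z_1$ pieces into $S_{1,1}$: these are $\xi_{\sigma,i}\ve R_{1,i}(z)$ from the $Pw_i^2-w_i^2$ expansion and $-\xi_{\sigma,i}^2 w^2(z)R_1(z)(1+o(1))$ from the $1/V$ expansion. Since $R_1(z)=O(\log(1+|z|))$ and the first term is of lower order in $\xi_\sigma$ (by a factor $\ve/\xi_\sigma$), the sum can be written as $\xi_{\sigma,i}^2\tilde R_1(z)$ with $\tilde R_1(z)=O(\log(1+|z|))$, giving the asserted form of $S_{1,1}$. The odd-in-$z_1$ pieces $\xi_{\sigma,i}\ve^2 R_{2,i}(z)-\xi_{\sigma,i}^2 w^2(z)R_2(z)$, together with the $h.o.t$ remainder of order $O(\xi_\sigma^2\ve^2\sqrt{D}\log(\xi_\sigma/(\ve D)))$, then yield $S_{1,2}$ in exactly the stated form. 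Finally, for $|x-p_i|\geq\sigma^{-1}R_0\log(\xi_\sigma/(\ve D))$, the decay $w_i(x)\leq Ce^{-|x-p_i|}$ and the analogous decay for $Pw_i$ imply that each summand of $S_1(U,V)$ is bounded by $(\ve D/\xi_\sigma)^{R_0/\sigma}$, which is the exponentially small far-field estimate claimed.

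The main obstacle is purely bookkeeping rather than conceptual. One has to carefully track three sources of error into the $h.o.t$ remainder: the contributions of $Pw_j$ for $j\neq i$ evaluated near $p_i$, the higher-order corrections in $Pw_i^2-w_i^2$, and the mismatch between $Pw_i^2$ and $w_i^2$ under the $1/V$ factor. All three can be controlled using the Green's function decay estimates (\ref{e:10})--(\ref{e:11}), the projection expansions in Proposition \ref{pro1} and Lemma \ref{lemma3}, and the properties of the cutoff $\chi_\ve$ from (\ref{chive}); once this is done, the parity-based split and the pointwise bounds on $\tilde R_1$, $R_2$, $R_{2,i}$ drop out of the formulas above.
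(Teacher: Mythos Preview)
Your proposal is correct and follows essentially the same approach as the paper: the lemma is really just a summary of the two expansions computed immediately before it, and your argument---substitute the expansions of $Pw_i^2-w_i^2$ and of $1/V(p_i+z)-1/V(p_i)$ into the formula for $S_1(U,V)$, localize near $p_i$, and sort by parity in $z_1$---is exactly what the paper does. Your bookkeeping of the three error sources and the far-field decay bound are likewise in line with the paper's (implicit) reasoning.
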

The above estimates will be very useful in the existence proof using the Liapunov-Schmidt reduction. In particular, they will imply an explicit formula for the positions of the spikes in Section 5.

\medskip

Now we study the linearised operator defined by
\begin{equation}
L_{\ve,{\bf p}}:=S'\vect{U}{V},
\end{equation}
\begin{equation}
L_{\ve, {\bf p}}: H_N^2(\Omega_\ve)\times H_N^2(\Omega_\ve)\to L^2(\Omega_\ve)\times L^2(\Omega_\ve).
\end{equation}
We first define
\begin{equation}
K_{\ve,{\bf p}}=C_{\ve,{\bf p}}=\operatorname{Span}\{\frac{\partial U}{\partial \tau(p_i)},i=1,\cdots,k\}
\end{equation}
and define the approximate kernels by
\begin{equation*}
\mathcal{K}_{\ve,{\bf p}}:=K_{\ve,{\bf p}}+\{0\}\subset H_N^2(\Omega_\ve)\times H_N^2(\Omega_\ve),
\end{equation*}
and choose the approximate cokernels as follows:
\begin{equation*}
\mathcal{C}_{\ve,{\bf p}}:=C_{\ve,{\bf p}}+\{0\}\subset L^2(\Omega_\ve)\times L^2(\Omega_\ve).
\end{equation*}
We then define
\begin{eqnarray*}
&&\mathcal{K}_{\ve,{\bf p}}^\perp:=K_{\ve,{\bf p}}^\perp+H_N^2(\Omega_\ve)\subset H_N^2(\Omega_\ve)\times H_N^2(\Omega_\ve),\\
&&\mathcal{C}_{\ve,{\bf p}}^\perp:=C_{\ve,{\bf p}}^\perp+L^2(\Omega_\ve)\subset L^2(\Omega_\ve)\times L^2(\Omega_\ve),
\end{eqnarray*}
where $C_{\ve,{\bf p}}^\perp, K_{\ve,{\bf p}}^\perp$ denote the orthogonal complements with the scalar product of $L^2(\Omega_\ve)$.

Let $\pi_{\ve,{\bf p}}$ denote the projection in $L^2(\Omega_\ve)\times L^2(\Omega_\ve)$ onto $\mathcal{C}_{\ve,{\bf p}}^\perp$. We are going to show that the equation

\begin{equation*}
\pi_{\ve,{\bf p}}\circ S_{\ve}\vect{U+\phi}{V+\psi}=0
\end{equation*}
has a unique solution $\Sigma_{\ve,{\bf p}}=\vect{\phi_{\ve,{\bf p}}}{\psi_{\ve,{\bf p}}}\in \mathcal{K}_{\ve,{\bf p}}^\perp$ if $\max\{\sigma, D\}$ is small enough.

Set
\begin{equation}
\mathcal{L}_{\ve,{\bf p}}=\pi_{\ve,{\bf p}}\circ L_{\ve,{\bf p}}: \mathcal{K}_{\ve,{\bf p}}^\perp\to \mathcal{C}_{\ve,{\bf p}}^\perp.
\end{equation}
Written in components, we have
\[
\mathcal{L}_{\ve,{\bf p}}:=
\left(
\begin{array}{l}
\mathcal{L}_{\ve,{\bf p},1}\\[1mm]
\mathcal{L}_{\ve,{\bf p},2}
\end{array}
\right)
\]
and
\[
\left(
\begin{array}{l}
\mathcal{L}_{\ve,{\bf p},1}\\[1mm]
\mathcal{L}_{\ve,{\bf p},2}
\end{array}
\right)
\vect{\phi_{\ve,{\bf p}}}{\psi_{\ve,{\bf p}}}
=
\left(
\begin{array}{l}
\Delta \phi_{\ve,{\bf p}} - \phi_{\ve,{\bf p}}
+\frac{2U}{V}\phi_{\ve,{\bf p}}-\frac{U^2}{V^2}\psi_{\ve,{\bf p}}
\\[1mm]
\Delta\psi_{\ve,{\bf p}}-\psi_{\ve,{\bf p}}
+2U\phi_{\ve,{\bf p}}
\end{array}
\right).
\]
As a preparation we state a result on the invertibility of the corresponding linearised operator $\mathcal{L}_{\ve,{\bf p}}$ whose proof is postponed to Appendix A.

\begin{proposition}
\label{linboun}
There exist positive constants $\bar{\delta},C$ such that for $\max\{\sigma,D\}<\bar{\delta}$, the map $\mathcal{L}_{\ve,{\bf p}}$ is surjective for arbitrary ${\bf p}\in \Lambda_k$. Moreover the following estimate holds:
\begin{equation}
\|\Sigma_{\ve,{\bf p}}\|_{H^{2}(\Omega_\ve)\times H^2(\Omega_\ve)}\leq C(\|\mathcal{L}_{\ve,{\bf p},1}(\Sigma_{\ve,{\bf p}})\|_{H^2(\Omega_\ve)}+\xi_{\sigma}^{-1}\|\mathcal{L}_{\ve,{\bf p},2}(\Sigma_{\ve,{\bf p}})\|_{H^2(\Omega_\ve)}).
\end{equation}
\end{proposition}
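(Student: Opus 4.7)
I would prove Proposition~4.1 by a standard contradiction/blow-up argument in the spirit of Liapunov-Schmidt reductions for NLEP-type systems (cf.\ \cite{WW1,WW5}). Assume the estimate fails: then there exist sequences $\ve_n\to 0$, $D_n$ with $\max\{\sigma_n,D_n\}\to 0$, ${\bf p}^{(n)}\in\Lambda_k$, and $\Sigma_n=(\phi_n,\psi_n)\in\mathcal K_{\ve_n,{\bf p}^{(n)}}^\perp$ with $\|\phi_n\|_{H^2}+\|\psi_n\|_{H^2}=1$ while $\|\mathcal L_{\ve_n,{\bf p}^{(n)},1}\Sigma_n\|_{H^2}+\xi_{\sigma_n}^{-1}\|\mathcal L_{\ve_n,{\bf p}^{(n)},2}\Sigma_n\|_{H^2}\to 0$. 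The goal is to extract a nontrivial limit $\phi_i^\infty$ near each spike $p_i$ solving a limiting nonlocal eigenvalue problem on $\R^2_+$, and then use Lemma~3.1 together with the orthogonality in $\mathcal K_{\ve,{\bf p}}^\perp$ to force $\phi_i^\infty\equiv 0$, contradicting the normalization.

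\textbf{Reducing to a single nonlocal equation and extracting the limit.} First I would invert the second component: since $\mathcal L_{\ve,{\bf p},2}\Sigma=\Delta\psi-\sigma^2\psi+2U\phi-(\sigma^2-1)\psi$-type terms (after regrouping the $\psi$ mass term appropriately with $T$), one writes $\psi=T[2U\phi]+\text{error}$, where $T$ is the Green's operator for $-\Delta+\sigma^2$ with Neumann data. Substituting back into the first equation reduces the problem to a nonlocal equation for $\phi$ alone:
\begin{equation*}
\Delta\phi-\phi+\tfrac{2U}{V}\phi-\tfrac{U^2}{V^2}T[2U\phi]=\tilde f_n,
\end{equation*}
with $\|\tilde f_n\|_{L^2}\to 0$ (the $\xi_\sigma^{-1}$ weight on $\mathcal L_{\ve,{\bf p},2}$ is exactly what is needed to absorb the loss when multiplying by $U^2/V^2\sim w^2$). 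For each $i$ I cut off $\phi_n$ in a ball of radius $R_0\sigma_n^{-1}\log(\xi_\sigma/(\ve D))$ around $p_i^{(n)}$, translate by $p_i^{(n)}$, straighten the boundary using the diffeomorphism from Section~3, and pass to the limit. By elliptic regularity a subsequence converges in $C^1_{\rm loc}$ to $\phi_i^\infty\in H^2_N(\R^2_+)$. The key computation is that, using $\xi_\sigma=\bigl(\tfrac{1}{\pi}\log\tfrac{1}{\sigma}\int_{\R^2_+}w^2\bigr)^{-1}$ and the leading term $G_{\sqrt D}(x,y)=-\tfrac{1}{\pi}\log|x-y|+O(1)$, the value
\begin{equation*}
T[2U\phi](p_i+z)\;=\;2\xi_\sigma\!\!\int_{\Omega_\ve}\!G_{\sqrt D}(\sigma(p_i+z),\sigma x)\sum_j w_j(x)\phi(x)\,dx+o(1)
\end{equation*}
concentrates (by (\ref{e:10})--(\ref{e:11}), the inter-spike contributions are $o(1)$) to $\frac{2}{\int_{\R^2_+}w^2}\int_{\R^2_+}w\phi_i^\infty$. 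Thus $\phi_i^\infty$ solves the limiting NLEP
\begin{equation*}
\Delta\phi_i^\infty-\phi_i^\infty+2w\,\phi_i^\infty-\frac{2w^2\int_{\R^2_+}w\phi_i^\infty}{\int_{\R^2_+}w^2}=0\quad\text{in }\R^2_+,\qquad \partial_{y_2}\phi_i^\infty=0\text{ on }\partial\R^2_+.
\end{equation*}

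\textbf{Nondegeneracy, conclusion, and the main obstacle.} Testing the limiting equation against $w$ itself and using $\int w^3=\tfrac32\int w^2$ (from the identity obtained by multiplying (\ref{ground}) by $w$) shows $\int w\phi_i^\infty=0$, so $\phi_i^\infty$ lies in $\ker L_0\cap H^2_N(\R^2_+)=\operatorname{span}\{\partial_{y_1}w\}$ by Lemma~3.1. The orthogonality $\phi_n\perp\partial U/\partial\tau(p_i^{(n)})$ in $L^2(\Omega_\ve)$ passes to the limit and yields $\int_{\R^2_+}\phi_i^\infty\,\partial_{y_1}w=0$, forcing $\phi_i^\infty\equiv 0$. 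This kills the blow-up profile near every spike; combined with standard decay estimates away from the spikes (the outer region is handled by comparison with $-\Delta+1$), one concludes $\|\phi_n\|_{H^2}\to 0$ and then $\|\psi_n\|_{H^2}\to 0$, a contradiction. The main obstacle is the careful bookkeeping of the nonlocal coupling: one must verify that inter-spike terms truly drop out at the leading logarithmic order (this is where assumption (\ref{equal}) and the bounds (\ref{e:10})--(\ref{e:11}) on $G_{\sqrt D}(\sigma p_i,\sigma p_j)$ become essential), and that the weight $\xi_\sigma^{-1}$ on $\mathcal L_{\ve,{\bf p},2}$ is exactly right so that the error $\tilde f_n$ from replacing $\psi$ with $T[2U\phi]$ is genuinely small in $L^2$. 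Once the limiting NLEP is isolated, surjectivity follows from the a priori estimate by an index/Fredholm argument: $\mathcal L_{\ve,{\bf p}}$ is a compact perturbation of an invertible operator on the orthogonal complements, and injectivity modulo $\mathcal K_{\ve,{\bf p}}$ (given by the estimate) implies surjectivity onto $\mathcal C_{\ve,{\bf p}}^\perp$.
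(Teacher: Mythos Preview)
Your proposal is correct and follows essentially the same contradiction/blow-up strategy as the paper's proof in Appendix~A: localize near each spike, pass to the limiting NLEP on $\R^2_+$, and invoke its nondegeneracy to force $\phi_i^\infty=0$. The paper cites Theorem~1.4 of \cite{W4} for this nondegeneracy rather than arguing directly, and establishes surjectivity via the conjugate operator $\mathcal L_{\ve,{\bf p}}^*$ rather than an abstract Fredholm argument, but these are cosmetic differences.

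One small wrinkle: your claim that ``testing against $w$ and using $\int w^3=\tfrac32\int w^2$ shows $\int w\phi_i^\infty=0$'' does not work as stated---that test only yields $\int w^2\phi_i^\infty=3\int w\phi_i^\infty$. The clean way to get $\int w\phi_i^\infty=0$ is to observe that $L_0\phi_i^\infty=c\,w^2=c\,L_0w$ with $c=2\int w\phi_i^\infty/\int w^2$, so $\phi_i^\infty-cw\in\ker L_0$, hence $\phi_i^\infty=cw+a\,\partial_{y_1}w$; substituting back gives $\int w\phi_i^\infty=c\int w^2=2\int w\phi_i^\infty$, forcing the integral to vanish. With this correction your argument goes through.
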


Now we are in the position to solve the equation
\begin{equation}
\pi_{\ve,{\bf p}}\circ S_{\ve}\vect{U+\phi}{V+\psi}=0.
\end{equation}

Since $\mathcal{L}_{\ve,{\bf p}}|_{\mathcal{K}_{\ve,{\bf p}}^\perp}$ is invertible, we can write the above equation as
\begin{equation}
\Sigma=-\mathcal{L}_{\ve,{\bf p}}^{-1}\circ \pi_{\ve,{\bf p}}(S_\ve\vect{U}{V})-\mathcal{L}_{\ve,{\bf p}}^{-1}\circ \pi_{\ve,{\bf p}}(N_{\ve,{\bf p}}(\Sigma)):=M_{\ve,{\bf p}}(\Sigma),
\end{equation}
where
\begin{equation*}
\Sigma=\vect{\phi}{\psi}
\end{equation*}
and
\begin{equation*}
N_{\ve,{\bf p}}(\Sigma)=S_\ve\vect{U+\phi}{V+\psi}-S_\ve\vect{U}{V}-S_{\ve}'\vect{U}{V}\vect{\phi}{\psi}
\end{equation*}
and the operator $M_{\ve,{\bf p}}$ is defined above for $\Sigma \in H_N^2(\Omega_\ve)\times H^2_N(\Omega_\ve)$. We are going to show that the operator $M_{\ve,{\bf p}}$ is a contraction mapping on
\begin{equation}
B_{\ve}=\{\Sigma\in H_N^2(\Omega_\ve)\times H_N^2(\Omega_\ve)| \|\Sigma\|_{H^2\times H^2}<C_0\xi_{\sigma}^2\}
\end{equation}
if $C_0$ is large enough. We have that
\begin{eqnarray*}
\|M_{\ve,{\bf p}}(\Sigma)\|_{H^2\times H^2}&\leq &C(\|\pi_{\ve,{\bf p}}\circ N_{\ve,{\bf p},1}(\Sigma)\|_{L^2}+\xi_{\sigma}^{-1}\|\pi_{\ve,{\bf p}}\circ N_{\ve,{\bf p},2}(\Sigma)\|_{L^2}\\
&+&\|\pi_{\ve,{\bf p}}\circ S_\ve\vect{U}{V}\|_{L^2\times L^2}\\
&\leq &C(c(\xi_{\sigma})\xi_{\sigma}^2+\xi_\sigma^2),
\end{eqnarray*}
where $C>0$ is independent of $\ve>0$ and $c(\xi_\sigma)\to 0$ as $\xi_\sigma \to0$. Similarly we can show that
\begin{equation*}
\|M_{\ve,{\bf p}}(\Sigma)-M_{\ve,{\bf p}}(\Sigma')\|_{H^2\times H^2}\leq Cc(\xi_\sigma)\|\Sigma-\Sigma'\|_{H^2\times H^2},
\end{equation*}
where $c(\xi_\sigma)\to 0$ as $\xi_\sigma \to 0$. If we choose $C_0$ large enough, then $M_{\ve,{\bf p}} $ is a contraction mapping on $B_{\ve}$. The existence of a fixed point $\Sigma_{\ve,{\bf p}}$ together with an error estimate now follows from the contraction mapping principle. Moreover, $\Sigma_{\ve,{\bf p}}$ is a solution. Thus we have proved

 \begin{lemma}\label{lemma2}
 There exists $\bar{\delta}>0$ such that for every triple $(\ve, D, {\bf p})$ with $\max\{\sigma, D\}<\bar{\delta}$, and ${\bf p} \in\Lambda_k$, there exists a unique $(\phi_{\ve,{\bf p}},\psi_{\ve,{\bf p}})\in\mathcal{K}_{\ve,{\bf p}}^\perp$ satisfying
 \begin{equation}\label{e:8}
 S_\ve\vect{U+\phi_{\ve,{\bf p}}}{V+\psi_{\ve,{\bf p}}}\in \mathcal{C}_{\ve,{\bf p}},
 \end{equation}
 and
 \begin{equation*}
 \|(\phi_{\ve,{\bf p}},\psi_{\ve,{\bf p}})\|_{H^2\times H^2}\leq C\xi_\sigma^2.
 \end{equation*}
 \end{lemma}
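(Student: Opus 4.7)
\medskip

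\noindent\textbf{Proof plan for Lemma \ref{lemma2}.} The plan is to rewrite the projected equation (\ref{e:8}) as a fixed point problem and apply the contraction mapping principle in a small ball of $H^2_N(\Omega_\ve)\times H^2_N(\Omega_\ve)$ whose radius is calibrated to the size of the residual $S_\ve(U,V)$. Concretely, I would split
\[
S_\ve\vect{U+\phi}{V+\psi}=S_\ve\vect{U}{V}+L_{\ve,{\bf p}}\vect{\phi}{\psi}+N_{\ve,{\bf p}}(\Sigma),
\]
project onto $\mathcal{C}_{\ve,{\bf p}}^\perp$, and invoke Proposition \ref{linboun} to invert $\mathcal{L}_{\ve,{\bf p}}$ on $\mathcal{K}_{\ve,{\bf p}}^\perp$. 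This recasts (\ref{e:8}) as $\Sigma=M_{\ve,{\bf p}}(\Sigma)$, and the existence and uniqueness of $\Sigma_{\ve,{\bf p}}$ follow once $M_{\ve,{\bf p}}$ is shown to be a contraction on the ball $B_\ve=\{\|\Sigma\|_{H^2\times H^2}<C_0\xi_\sigma^2\}$ for a suitably large $C_0$.

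Next I would assemble the two ingredients that drive the estimate. First, by Lemma \ref{lemma1} the inhomogeneous term $S_\ve(U,V)$ is pointwise of order $\xi_\sigma^2$ on each region $|z-p_i|\le \frac{R_0}{\sigma}\log\frac{\ve D}{\xi_\sigma}$, with polynomial growth in $|z|$ (through $\tilde R_1$ and $R_2$) that is easily absorbed after integration against the exponentially decaying weights, and it is exponentially small outside these regions; hence
\[
\|\pi_{\ve,{\bf p}}\circ S_\ve(U,V)\|_{L^2\times L^2}\le C\xi_\sigma^2.
\]
Second, the nonlinear remainder $N_{\ve,{\bf p}}(\Sigma)$ is quadratic in $\Sigma$: schematically, in the first component it behaves like $\frac{(U+\phi)^2}{V+\psi}-\frac{U^2}{V}-\frac{2U}{V}\phi+\frac{U^2}{V^2}\psi=O(\phi^2+\psi^2+|\phi\psi|)$, so for $\Sigma,\Sigma'\in B_\ve$ one obtains
\[
\|\pi_{\ve,{\bf p}}\circ N_{\ve,{\bf p}}(\Sigma)\|_{L^2\times L^2}\le c(\xi_\sigma)\|\Sigma\|_{H^2\times H^2},\qquad
\|N_{\ve,{\bf p}}(\Sigma)-N_{\ve,{\bf p}}(\Sigma')\|_{L^2\times L^2}\le c(\xi_\sigma)\|\Sigma-\Sigma'\|,
\]
with $c(\xi_\sigma)\to0$ as $\sigma\to 0$, using Sobolev embedding $H^2\hookrightarrow L^\infty$ in two dimensions and the uniform lower bound on $V$. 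Combining these estimates with the invertibility bound of Proposition \ref{linboun} (which, crucially, rescales the second component by $\xi_\sigma^{-1}$) yields
\[
\|M_{\ve,{\bf p}}(\Sigma)\|_{H^2\times H^2}\le C\bigl(\xi_\sigma^2+c(\xi_\sigma)\xi_\sigma^2\bigr),\qquad
\|M_{\ve,{\bf p}}(\Sigma)-M_{\ve,{\bf p}}(\Sigma')\|_{H^2\times H^2}\le Cc(\xi_\sigma)\|\Sigma-\Sigma'\|_{H^2\times H^2},
\]
so choosing $C_0$ large enough and $\max\{\sigma,D\}<\bar\delta$ small enough, $M_{\ve,{\bf p}}$ is a contraction of $B_\ve$ into itself.

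The main obstacle is the asymmetry between the two components of $\mathcal{L}_{\ve,{\bf p}}$: the inhibitor equation has the small coefficient $\sigma^2$ in front of $\psi$, so naive estimates lose a factor of $\xi_\sigma^{-1}$. This is exactly what is accounted for by the weighted norm in Proposition \ref{linboun}, and this weighting must be carried through both the estimate on the inhomogeneous term from the $V$-equation $\|\Delta V-\sigma^2 V+U^2\|_{L^2}$ (which itself is $O(\xi_\sigma^2)$ thanks to the way $V=T[U^2]$ was constructed, so the error comes only from cross-terms and the truncation of $Pw_i$ versus $w_i$) and through the estimate on the second component of $N_{\ve,{\bf p}}$, which is simply $-2U\phi$ and therefore bounded by $C\|\phi\|_{L^2}$ with no small constant, forcing the contraction to rely on the $\xi_\sigma^{-1}$ weighting producing a factor $\xi_\sigma^{-1}\xi_\sigma\|\phi\|=\|\phi\|$ multiplied by a further small factor coming from the size of $\phi$ itself. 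Once this bookkeeping is handled correctly, the contraction mapping principle furnishes the unique fixed point $\Sigma_{\ve,{\bf p}}\in\mathcal{K}_{\ve,{\bf p}}^\perp\cap B_\ve$ satisfying both (\ref{e:8}) and the stated error estimate $\|(\phi_{\ve,{\bf p}},\psi_{\ve,{\bf p}})\|_{H^2\times H^2}\le C\xi_\sigma^2$.
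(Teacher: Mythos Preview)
Your approach is essentially identical to the paper's: the same decomposition $S_\ve=S_\ve(U,V)+L_{\ve,{\bf p}}\Sigma+N_{\ve,{\bf p}}(\Sigma)$, the same fixed-point map $M_{\ve,{\bf p}}=-\mathcal{L}_{\ve,{\bf p}}^{-1}\circ\pi_{\ve,{\bf p}}(S_\ve(U,V)+N_{\ve,{\bf p}}(\Sigma))$, and the same ball $B_\ve=\{\|\Sigma\|_{H^2\times H^2}<C_0\xi_\sigma^2\}$, with Proposition~\ref{linboun} supplying the weighted invertibility.

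One correction to your last paragraph: the second component of $N_{\ve,{\bf p}}(\Sigma)$ is \emph{not} $-2U\phi$ --- that term belongs to the linearisation $\mathcal{L}_{\ve,{\bf p},2}$. Since the second component of $S_\ve$ is $\Delta v-\sigma^2 v+u^2$, the nonlinear remainder in that component is exactly $(U+\phi)^2-U^2-2U\phi=\phi^2$. This is genuinely quadratic, so on $B_\ve$ one has $\xi_\sigma^{-1}\|N_{\ve,{\bf p},2}(\Sigma)\|_{L^2}\le C\xi_\sigma^{-1}\|\phi\|_{H^2}^2\le C\xi_\sigma^{-1}(C_0\xi_\sigma^2)\|\phi\|_{H^2}=Cc(\xi_\sigma)\|\phi\|_{H^2}$, and the contraction goes through cleanly. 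Your attempted rescue (``a further small factor coming from the size of $\phi$ itself'') would not work for a term linear in $\phi$, so it is important that $N_{\ve,{\bf p},2}$ is actually $\phi^2$.
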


 More refined estimates for $\phi_{\ve,{\bf p}}$ are needed. We recall that from Lemma \ref{lemma1} that $S_1(U,V)$ can be decomposed into two parts $S_{1,1}$ and $S_{1,2}$, where $S_{1,1}$ is in leading order  an even function in $z_1$ and $S_{1,2}$ is in leading order an odd function in $z_1$. Similarly we can decompose $\phi_{\ve,{\bf p}}$.
 \begin{lemma}\label{lemma3a}
 Let $\phi_{\ve,{\bf p}}$ be defined  by (\ref{e:8}). Then for $x=p_i+z$, we have
 \begin{equation}
 \phi_{\ve,{\bf p}}(x)=\phi_{\ve,{\bf p},1}+\phi_{\ve,{\bf p},2},
 \end{equation}
 where $\phi_{\ve,{\bf  p},1}$ is even in $z_1$ which can be estimated by
 \begin{equation}
 \phi_{\ve,{\bf p},1}=O(\xi_\sigma^2) \mbox{ in }H_N^2(\Omega_\ve),
 \end{equation}
 and $\phi_{\ve,{\bf p},2}$ can be estimated by
 \begin{eqnarray}
 \phi_{\ve,{\bf p},2}&&=O(\sum_{j\neq i}\xi^2_\sigma \sigma |\nabla G_{\sqrt{D}}(\sigma p_i,\sigma p_j)|)+O(\sum_{i=1}^k\xi_{\sigma}\ve^2h'(\ve p_i))\\
 &&=O(\xi_\sigma \ve^2\sqrt{D}\log\frac{\xi_\sigma}{\ve D}).\nonumber
 \end{eqnarray}
 \end{lemma}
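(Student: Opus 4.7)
The plan is to refine the fixed-point construction of Lemma \ref{lemma2} by tracking the parity of both source and solution under $z_1 \mapsto -z_1$ in the local tangential coordinate at each spike center $p_i$. Lemma \ref{lemma1} already provides the parity split $S_1(U,V) = S_{1,1} + S_{1,2}$ of the source, with $S_{1,1} = \xi_{\sigma,i}^2 \tilde R_1(z)$ even in $z_1$ and $\tilde R_1 = O(\log(1+|z|))$, while $S_{1,2}$ is odd in $z_1$ of size
\[
\|S_{1,2}\|_{L^2} = O\bigl(\xi_\sigma^2 \sigma |\nabla G_{\sqrt{D}}(\sigma p_i,\sigma p_j)| + \xi_\sigma \ve^2 h'(\ve p_i)\bigr) + h.o.t.
\]
Because both $w$ and $Pw_{p_i}$ are even in $z_1$ at leading order, the coefficients of $\mathcal{L}_{\ve,{\bf p}}$ are likewise even modulo $O(\ve)$ asymmetric corrections from the boundary curvature and from the difference $Pw_{p_i} - w(\cdot - p_i)$ controlled by Proposition \ref{pro1}.

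I would rerun the contraction argument of Lemma \ref{lemma2} on the even and odd subspaces of $\mathcal{K}_{\ve,{\bf p}}^\perp$ separately, obtaining $\Sigma_j = (\phi_{\ve,{\bf p},j},\psi_{\ve,{\bf p},j})$ for $j=1,2$. Proposition \ref{linboun} then yields $\|\Sigma_j\|_{H^2\times H^2} \leq C\|S_{1,j}\|_{L^2}$ to leading order. Since $\tilde R_1 = O(\log(1+|z|))$ is integrated against projected exponentially decaying test functions, $\|S_{1,1}\|_{L^2} = O(\xi_\sigma^2)$, giving $\|\phi_{\ve,{\bf p},1}\|_{H^2} = O(\xi_\sigma^2)$. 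For the odd part I would invoke \eqref{e:11} to deduce
\[
\sigma|\nabla_p G_{\sqrt{D}}(\sigma p_i,\sigma p_j)| = O\!\left(\frac{\ve^2 \sqrt{D}}{\xi_\sigma}\log\frac{\xi_\sigma}{\ve D}\right),
\]
and combine this with the fact that $P^0$ is a nondegenerate maximum of $h$ together with $|\ve p_i - P^0| = O(\sqrt{D}\log(\xi_\sigma/(\ve D)))$ from \eqref{e:3}, which gives $h'(\ve p_i) = O(\sqrt{D}\log(\xi_\sigma/(\ve D)))$. Both contributions then merge into the claimed bound
\[
\|\phi_{\ve,{\bf p},2}\|_{H^2} = O\!\left(\xi_\sigma \ve^2 \sqrt{D}\log\frac{\xi_\sigma}{\ve D}\right).
\]
The uniqueness of the fixed point in $\mathcal{K}_{\ve,{\bf p}}^\perp$ established in Lemma \ref{lemma2} then forces the identification $\phi_{\ve,{\bf p}} = \phi_{\ve,{\bf p},1} + \phi_{\ve,{\bf p},2}$.

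The main obstacle is controlling the coupling between the two parity classes: the quadratic nonlinearity $N_{\ve,{\bf p}}(\Sigma)$ mixes even and odd parts, and the $O(\ve)$ asymmetric parts of the linearised coefficients mentioned above also produce cross terms. However, each such coupling contributes at order $\ve$ (or $\xi_\sigma$) times the current iterate, which is strictly smaller than the leading bounds and can therefore be absorbed into the $h.o.t.$ term throughout the Banach iteration without disturbing the estimates stated above.
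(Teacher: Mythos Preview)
Your proposal is correct and follows essentially the same route as the paper. The paper also splits the source according to Lemma~\ref{lemma1} into $S_{1,1}$ and $S_{1,2}$, invokes the contraction argument of Lemma~\ref{lemma2} together with the linear bound of Proposition~\ref{linboun}, and identifies the sum with $\phi_{\ve,{\bf p}}$ by uniqueness. The only organisational difference is that the paper proceeds \emph{sequentially} rather than in parallel: it first solves
\[
S(U+\phi_{\ve,{\bf p},1})-S(U)+\sum_i S_{1,1}(x-p_i)\in C_{\ve,{\bf p}}
\]
for $\phi_{\ve,{\bf p},1}$, and then solves for $\phi_{\ve,{\bf p},2}$ around the updated state $U+\phi_{\ve,{\bf p},1}$ with source $S_{1,2}$. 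This sidesteps the parity-coupling issue you flag in your last paragraph, since in the second step one only needs that $S_{1,2}$ is small, not that the odd subspace is invariant. Your observation that the cross terms are of strictly lower order is correct and would also close the argument; the sequential version is just cleaner bookkeeping. Your explicit verification of the final bound via \eqref{e:11} and $h'(\ve p_i)=O(\sqrt{D}\log(\xi_\sigma/\ve D))$ is more detailed than what the paper writes out.
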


\begin{proof}
Let $S(u)=S_1(u,T(u))$. We first solve
\begin{equation}
S(U+\phi_{\ve,{\bf p},1})-S(U)+\sum_{i=1}^k S_{1,1}(x-p_i)\in C_{\ve,{\bf p}},
\end{equation}
for $\phi_{\ve,{\bf p},1}\in K_{\ve,{\bf p}}^\perp$.
Then we solve
\begin{eqnarray*}
S[U+\phi_{\ve,{\bf p},1}+\phi_{\ve,{\bf p},2}]-S[U+\phi_{\ve,{\bf p},1}]+\sum_{i=1}^kS_{1,2}(x-p_i)\in C_{\ve,{\bf p}}
\end{eqnarray*}
for $\phi_{\ve,{\bf p},2}\in K_{\ve,{\bf p}}^\perp$.

Using the same proof as in Lemma \ref{lemma2}, the above two equations are uniquely solvable for $\max\{\sigma,D\}\ll1$. By uniqueness, $\phi_{\ve,{\bf p}}=\phi_{\ve,{\bf  p},1}+\phi_{\ve,{\bf p},2}$. Since $S_{1,2}=S_{1,2}^0+S_{1,2}^\perp$, where $S_{1,2}^0=O(\xi_\sigma \ve^2\sqrt{D}\log\frac{\xi_\sigma}{\ve D})$ and $S_{1,2}^\perp\in C_{\ve,{\bf p}}^\perp$, it is easy to see that $\phi_{\ve,{\bf p},1}$ and $\phi_{\ve,{\bf p},2}$ have the required properties.

\end{proof}

 \section{ Existence proof II: The reduced problem}
 \setcounter{equation}{0}

 In this section, we solve the reduced problem. This completes the proof for our main existence result given in Theorem \ref{existenceth}.

 By Lemma \ref{lemma2}, for every ${\bf p}\in \Lambda_k$, there exists a unique solution $(\phi,\psi)\in \mathcal{K}_{\ve,{\bf p}}^\perp$ such that
 \begin{equation}
  S_\ve\vect{U+\phi}{V+\psi}\in \mathcal{C}_{\ve,{\bf p}}.
 \end{equation}

 We need to determine ${\bf p}=(p_1,\cdots,p_k)\in \Lambda_k$ such that
 \begin{equation*}
 S_\ve\vect{U+\phi}{V+\psi}\perp \mathcal{C}_{\ve,p}
 \end{equation*}
 and therefore $S_\ve\vect{U+\phi}{V+\psi}=0$.

 \medskip

 To this end, we calculate the projection:
 \begin{eqnarray*}
 &&\int_{\Omega_\ve}S_1(U+\phi,V+\psi)\frac{\partial Pw_i}{\partial \tau(p_i)}\,dx\\
 &&=\int_{\Omega_\ve}\Big( \Delta(U+\phi)-(U+\phi)+\frac{(U+\phi)^2}{V+\psi} \Big)\frac{\partial Pw_i}{\partial \tau(p_i)}\,dx\\
 &&=\int_{\Omega_\ve}\Big( \Delta(U+\phi)-(U+\phi)+\frac{(U+\phi)^2}{V} \Big)\frac{\partial Pw_i}{\partial \tau(p_i)}\,dx\\
 &&+\int_{\Omega_\ve}\Big(\frac{(U+\phi)^2}{V+\psi} -\frac{(U+\phi)^2}{V}\Big)\frac{\partial Pw_i}{\partial \tau(p_i)}\,dx\\
 &&=I_1+I_2,
 \end{eqnarray*}
 where $I_1,I_2$ are defined by the last equality.

 For $I_1$, we have
 \begin{eqnarray*}
 I_1&=&\int_{\Omega_\ve}\Big( \Delta(U+\phi)-(U+\phi)+\frac{(U+\phi)^2}{V} \Big)\frac{\partial Pw_i}{\partial \tau(p_i)}\,dx\\
&=&\int_{\Omega_\ve}\Big(\Delta(\xi_{\sigma,i}Pw_i+\phi)-(\xi_{\sigma,i}Pw_i+\phi)+\frac{(\xi_{\sigma,i}Pw_i+\phi)^2}{V(p_i)}\Big)\frac{\partial Pw_i}{\partial \tau(p_i)}\,dx\\
&+&\int_{\Omega_\ve}(\xi_{\sigma,i}Pw_i+\phi)^2(\frac{1}{V(x)}-\frac{1}{V(p_i)})\frac{\partial Pw_i}{\partial \tau(p_i)}\,dx+O(\ve^4)\\
&=&I_{11}+I_{12}+O(\ve^4).
 \end{eqnarray*}

 Note that by the estimates satisfied by $\phi$ in Lemma \ref{lemma3a}, we have
 \begin{eqnarray}\label{e:15}
 &&\int_{\Omega_\ve}(\Delta \phi-\phi+2Pw_i\phi)\frac{\partial Pw_i}{\partial \tau(p_i)}\,dx\nonumber\\
 &=&\int_{\Omega_\ve}\phi\frac{\partial }{\partial \tau(p_i)}(Pw_i^2-w_i^2)\nonumber\\
 &=&\int_{\Omega_\ve}(\phi_{\ve,{\bf p},1}+\phi_{\ve,{\bf p},2})\frac{\partial }{\partial \tau(p_i)}[2\ve w_iv_i^{(1)}+2\ve^2w_iv_i^{(2)}+2\ve^2w_iv_i^{(3)}+\ve^2(v_i^{(1)})^2]\,dx+O(\xi_\sigma^2\ve^3)\nonumber\\
 &=&\sum_{j\neq i}O(\ve\xi_\sigma^2|\nabla_{p_i} G_{\sqrt{D}}(\sigma p_i,\sigma p_j)|)+\sum_{i=1}^k\xi_\sigma^2\ve^2|h'(\ve p_i)|+O(\ve^3\xi_\sigma^2)\nonumber\\
 &=&O(\xi_\sigma)[\sum_{j\neq i}\xi_\sigma^2\sigma |\nabla G_{\sqrt{D}}(\sigma p_i,\sigma p_j)|+\sum_{i=1}^k\xi_\sigma \ve^2 |h'(\ve p_i)|]\nonumber\\
 &=&O(\xi_\sigma^2\sigma \ve D\log\frac{\xi_\sigma}{\ve D}),
 \end{eqnarray}
 where we have used the estimates (\ref{e:10})-(\ref{e:11}).
 Further, we have
 \begin{eqnarray}\label{e:16}
 \int_{\Omega_\ve}\frac{\phi^2}{V(p_i)}\frac{\partial Pw_i}{\partial \tau(p_i)}\,dx&=&\frac{1}{\xi_{\sigma,i}}
 \int_{\Omega_\ve}(\phi_{\ve,{\bf p},1}+\phi_{\ve,{\bf p},2})^2\frac{\partial Pw_i}{\partial \tau(p_i)}\,dx\nonumber\\
 &=&O(\xi_\sigma)(\sum_{j\neq i}\xi_\sigma^2\sigma |\nabla G_{\sqrt{D}}(\sigma p_i,\sigma p_j)|+\sum_{i=1}^k\xi_\sigma \ve^2 |h'(\ve p_i)|)\nonumber\\
 &=&O(\xi_\sigma^2\sigma \ve D\log\frac{\xi_\sigma}{\ve D}).
 \end{eqnarray}
 We compute
 \begin{eqnarray}\label{e:17}
 &&\xi_{\sigma,i}\int_{\Omega_\ve}(\Delta Pw_i-Pw_i+Pw_i^2)\frac{\partial Pw_i}{\partial \tau(p_i)}\,dx\nonumber\\
 &&=\xi_{\sigma,i}\int_{\Omega_\ve}(Pw_i^2-w_i^2)\frac{\partial Pw_i}{\partial \tau(p_i)}\,dx\nonumber\\
 &&=\xi_{\sigma,i}\int_{\R^2_+}\ve(2 w v_i^{(1)}+2\ve w v_i^{(2)}+2\ve w v_i^{(3)}+\ve (v_i^{(1)})^2)
 \\ && \times(\frac{\partial w}{\partial y_1}+\ve\frac{\partial v_i^{(1)}}{\partial y_1}+\ve^2\frac{\partial v_i^{(2)}}{\partial y_1}+\ve^2\frac{\partial v_i^{(3)}}{\partial y_1})\,dy+O(\xi_\sigma \ve^3)\nonumber\\
 &&=2\ve^2\xi_{\sigma,i}\int_{\R^2_+}w v_i^{(3)}\frac{\partial w}{\partial y_1}\,dy+O(\xi_\sigma^2\sigma \ve D\log\frac{\xi_\sigma}{\ve D}).
 \end{eqnarray}
By (\ref{v3equ}), we have
\begin{eqnarray}\label{e:18}
\int_{\R^2_+}2w(y)\frac{\partial w(y)}{\partial y_1}v_i^{(3)}\,dy&=&\int_{\R^2_+}-(\Delta -1)\frac{\partial w(y)}{\partial y_1}v_i^{(3)}\,dy\nonumber\\
&=&-\int_{\R}\left(\frac{\partial w(y)}{\partial y_1}\frac{\partial v_i^{(3)}}{\partial y_2}-v_i^{(3)}\frac{\partial }{\partial y_2}\frac{\partial w(y)}{\partial y_1}\right)\,dy_1\nonumber\\
&=&-\frac{1}{3}\int_{\R}(\frac{w'(|y|)}{|y|})^2h'(\ve p_i)y_1^4\,dy_1\nonumber\\
&=&-\nu_1 \frac{\partial }{\partial \tau(\ve p_i)}h(\ve p_i),
\end{eqnarray}
where the constant $\nu_1>0$ is defined by
\begin{equation}
\nu_1=\frac{1}{3}\int_\R\left(\frac{\partial w(y_1,0)}{\partial y_1}\right)^2y_1^2\,dy_1>0.
\label{nu1}
\end{equation}

Now by (\ref{e:15})-(\ref{e:18}),
\begin{equation}\label{e:19}
I_{11}=-\ve^2\xi_\sigma \nu_1 \frac{\partial }{\partial \tau(\ve p_i)}h(\ve p_i)+O(\xi_\sigma^2\sigma \ve D\log\frac{\xi_\sigma}{\ve D}).
\end{equation}
 Next we estimate $I_{12}$:
 \begin{eqnarray}\label{e:20a}
 I_{12}&&=\int_{\Omega_\ve}(\xi_{\sigma,i}Pw_i+\phi)^2(\frac{1}{V(x)}-\frac{1}{V(p_i)})\frac{\partial Pw_i}{\partial \tau(p_i)}\,dx\nonumber\\
 &&=\int_{\Omega_\ve}(\xi_{\sigma,i}Pw_i+\phi_{\ve,{\bf p},1}+\phi_{\ve,{\bf p},2})^2\frac{1}{V(p_i)^2}(-\xi_{\sigma,i}^2R_1-\xi_{\sigma,i}^2R_2)\frac{\partial Pw_i}{\partial \tau(p_i)}\,dx\nonumber\\
 &&\,\,\,\,\,\,\,\,+O(\xi_\sigma)(\sum_{j\neq i}\xi_\sigma^2 |\nabla_{p_i} G_{\sqrt{D}}(\sigma p_i,\sigma p_j)|+\sum_{i=1}^k\xi_\sigma \ve^2 |h'(\ve p_i)|)\nonumber\\
 &&=-\xi_{\sigma,i}^2\int_{\R^2_+}w(y)^2R_2(y)\frac{\partial w}{\partial y_1}\,dy+O(\xi_\sigma^2\sigma \ve D\log\frac{\xi_\sigma}{\ve D})\nonumber\\
 &&=-\sum_{j\neq i}\xi_{\sigma}^2\frac{\partial G_{\sqrt{D}}(\sigma p_i,\sigma p_j)}{\partial \tau(p_i)}(\int_{\R^2_+}w^2y_1\frac{\partial w}{\partial y_1}\,dy\int_{\R^2_+}w^2\,dy)+O(\xi_\sigma^2\sigma \ve D\log\frac{\xi_\sigma}{\ve D})\nonumber\\
 &&=\nu_2\sum_{j\neq i}\xi_{\sigma}^2\frac{\partial G_{\sqrt{D}}(\sigma p_i,\sigma p_j)}{\partial \tau(p_i)}+O(\xi_\sigma^2\sigma \ve D\log\frac{\xi_\sigma}{\ve D})\nonumber\\
 &&= \nu_2\sum_{j\neq i}\xi_{\sigma}^2\frac{\partial G_0(\sigma p_i,\sigma p_j)}{\partial \tau(p_i)}+O(\xi_\sigma^2\sigma \ve D\log\frac{\xi_\sigma}{\ve D}),
 \end{eqnarray}
 where we have used $\sqrt{D}\log\frac{\xi_\sigma}{\ep D}\ll 1$ and
 \begin{equation}
 \label{nu2}
 \nu_2=
 \frac{1}{3}\int_{\R^2_+}w^3\,dy\int_{\R^2_+}w^2\,dy>0.
 \end{equation}

 Thus by (\ref{e:19}) and (\ref{e:20a}), we have
 \begin{eqnarray}\label{e:21}
 I_1&&=\nu_2\sum_{j\neq i}\xi_{\sigma}^2\frac{\partial G_{0}(\sigma p_i,\sigma p_j)}{\partial \tau(p_i)}-\ve^2\xi_\sigma \nu_1 \frac{\partial }{\partial \tau(\ve p_i)}h(\ve p_i)\nonumber\\
 &&+ O(\xi_\sigma^2\sigma \ve D\log\frac{\xi_\sigma}{\ve D}).
 \end{eqnarray}

 Next we estimate $I_2$:
 \begin{eqnarray*}
 I_2&&=\int_{\Omega_\ve}(\frac{(U+\phi)^2}{V+\psi}-\frac{(U+\phi)^2}{V})\frac{\partial Pw_i}{\partial \tau(p_i)}\,dx\\
 &&=-\int_{\Omega_\ve}\frac{(\xi_{\sigma,i}Pw_i+\phi)^2}{V^2}\psi\frac{\partial Pw_i}{\partial \tau(p_i)}\,dx\\
 &&+O(\xi_\sigma)(\sum_{j\neq i}\xi_\sigma^2 |\nabla_{p_i} G_{\sqrt{D}}(\sigma p_i,\sigma p_j)|+\sum_{i=1}^k\xi_\sigma \ve^2 |h'(\ve p_i)|)\\
 &&=-\frac{1}{3}\int_{\Omega_\ve}\frac{\partial (Pw_i)^3}{\partial \tau(p_i)}(\psi(x)-\psi(p_i))\,dx+ O(\xi_\sigma^2\sigma \ve D\log\frac{\xi_\sigma}{\ve D}).
 \end{eqnarray*}

 By the equation for $\psi$, we have
 \begin{equation*}
 \Delta \psi-\sigma^2\psi+2U\phi+\phi^2=0
 \end{equation*}
 and therefore
 \begin{eqnarray*}
 \psi(p_i+z)-\psi(p_i)&&=\int_{\Omega_\ve}[G_{\sqrt{D}}(\sigma p_i,\sigma y)-G_{\sqrt{D}}(\sigma p_i+\sigma z,\sigma y)](2U\phi+\phi^2)(y)\,dy\\
 &&=O(\xi_\sigma \sum_{j\neq i}\xi_\sigma^2|\nabla_{p_i} G_{\sqrt{D}}(\sigma p_i,\sigma p_j)||z|)+O(\xi_\sigma^3)R(z),
 \end{eqnarray*}
 where $R(z)$ is an even function in $z_1$.

 Thus we have
 \begin{eqnarray}\label{e:22}
 I_2=O(\xi_\sigma^2\sigma \ve D\log\frac{\xi_\sigma}{\ve D}).
 \end{eqnarray}

Combining the estimates for $I_1$ and $I_2$, (\ref{e:21}) and (\ref{e:22}), we have
 \begin{eqnarray*}
 W_{\ve,i}&:=&\int_{\Omega_\ve}S_1[U+\phi,V+\psi]\frac{\partial Pw_i}{\partial \tau(p_1)}\,dx\\
 &=&\xi_\sigma[\nu_2\sum_{j\neq i}\xi_{\sigma}\sigma G_{0}'(\sigma|p_i-p_j|)\frac{p_i-p_j}{|p_i-p_j|}-\ve^3 \nu_1 \frac{\partial^2 }{\partial \tau^2}h(\ve p^0)(p_i-p^0)]\nonumber\\
 &+& o(\xi_\sigma \sigma \ve D\log\frac{\xi_\sigma}{\ve D}).
  \end{eqnarray*}

 Thus $W_{\ve,i}=0$ is reduced to the following system:
 \begin{equation}
 \left\{\begin{array}{l}
-\nu_2\xi_{\sigma}\sigma G_{0}'(\sigma|p_1-p_2|)-\nu_1 \ve^3\frac{\partial^2 }{\partial \tau^2}h(\ve p^0)(p_1-p^0)=o(\sigma \ve D\log\frac{\xi_\sigma}{\ve D}),\\
\\
\nu_2\xi_{\sigma}\sigma (G_{0}'(\sigma|p_1-p_2|)- G_{0}'(\sigma|p_2-p_3|))-\nu_1 \ve^3\frac{\partial^2 }{\partial \tau^2}h(\ve p^0)(p_2-p^0)=o(\sigma \ve D\log\frac{\xi_\sigma}{\ve D}),\\
\\
\cdots  \\
\\
\nu_2\xi_{\sigma}\sigma (G_{0}'(\sigma|p_{k-1}-p_{k-2}|)-G_{0}'(\sigma|p_{k-1}-p_k|))-\nu_1 \ve^3\frac{\partial^2 }{\partial \tau^2}h(\ve p^0)(p_{k-1}-p^0)=o(\sigma \ve D\log\frac{\xi_\sigma}{\ve D}),\\
\\
\nu_2\xi_{\sigma}\sigma G_{0}'(\sigma|p_k-p_{k-1}|)-\nu_1 \ve^3\frac{\partial^2 }{\partial \tau^2}h(\ve p^0)(p_k-p^0)=o(\sigma \ve D\log\frac{\xi_\sigma}{\ve D}).
\end{array}
\right.
 \end{equation}

 We first solve the limiting case:
  \begin{equation}
 \left\{\begin{array}{l}
-\nu_2\xi_{\sigma}\sigma G_{0}'(\sigma|p^0_1-p^0_2|)-\nu_1 \ve^3\frac{\partial^2 }{\partial \tau^2}h(\ve p^0)(p^0_1-p^0)=0,\\
\\
\nu_2\xi_{\sigma}\sigma (G_{0}'(\sigma|p^0_1-p^0_2|)- G_{0}'(\sigma|p^0_2-p^0_3|))-\nu_1 \ve^3\frac{\partial^2 }{\partial \tau^2}h(\ve p^0)(p^0_2-p^0)=0,\\
\\
\cdots  \\
\\
\nu_2\xi_{\sigma}\sigma (G_{0}'(\sigma|p^0_{k-2}-p^0_{k-1}|)
-G_{0}'(\sigma|p^0_{k-1}-p^0_k|))-\nu_1 \ve^3\frac{\partial^2 }{\partial \tau^2}h(\ve p^0)(p^0_{k-1}-p^0)=0,\\
\\
\nu_2\xi_{\sigma}\sigma G_{0}'(\sigma|p^0_{k-1}-p^0_{k}|)-\nu_1 \ve^3\frac{\partial^2 }{\partial \tau^2}h(\ve p^0)(p^0_k-p^0)=0.
\end{array}
\right.
\label{above}
 \end{equation}

This system  is uniquely solvable with
\begin{equation}\label{e:32}
\sum_{i=1}^k p_i^0=p^0.
\end{equation}

Moreover, we have
\begin{eqnarray}\label{e:33}
\sigma(p^0_i-p^0_{i-1})&&=\log\frac{\xi_\sigma}{\ve D}-\frac{3}{2}\log\log\frac{\xi_\sigma}{\ve D}\\
&&-\log(-\frac{h''(p^0)\nu_1}{2 \nu_2})-\log[(i-1)(k+1-i)]+O(\frac{\log\log\frac{\xi_\sigma}{\ve D}}{\log \frac{\xi_\sigma}{\ve D}}),
\nonumber
\end{eqnarray}
where we have used the notation $h''(p^0)=\frac{\partial^2}{\partial \tau^2}h(P^0)<0$.

 From (\ref{e:33}), we know
 \begin{equation}
 \sum_{i=1}^jp_i^0=O(\frac{1}{\sigma}\log\frac{\ve D}{\xi_\sigma}) \mbox{ for }j=1,\cdots,k-1.
 \end{equation}

To find $p_i$ such that $W_{\ve,i}=0$, we expand $p_i=p_i^0+\tilde{p_i}$.
 Then adding
 the first $i$ equations, we have
\begin{equation}
-\nu_2\xi_{\sigma}\sigma G''_{0}(\sigma|p_i^0-p_{i+1}^0|)[\sigma(\tilde{p}_i-\tilde{p}_{i+1})+O(\sigma^2|\tilde{p}|^2)]
+\nu_1\sum_{j=1}^i\ve^3h''(p^0)\tilde{p}_i=o(\sigma \ve D\log\frac{\xi_\sigma}{\ve D}).
\end{equation}
 One can get that
 \begin{equation*}
 \tilde{p}_i=\tilde{p}_1(1+o(1))+O(\frac{1}{\sigma}), \mbox{ for }i=2,\cdots,k.
 \end{equation*}
By the last equation
 \begin{equation}
 \sum_{i=1}^k\nu_1\ve^3h''(p^0)\tilde{p}_i=o(\sigma \ve D\log\frac{\xi_\sigma}{\ve D}).
 \end{equation}

Thus
\begin{equation}
\nu_1 \ve^3h''(p^0)k\tilde{p}_1(1+o(1))=o(\sigma \ve D\log\frac{\xi_\sigma}{\ve D})+O(\frac{\ve^3}{\sigma}),
\end{equation}
 From the equation above, we can estimate $\tilde{p}_1$ by
 \begin{equation}
 \tilde{p}_1=o(\frac{1}{\sigma}\log \frac{\ve D}{\xi_\sigma}).
 \end{equation}
 In conclusion, we solve $W_{\ve,i}=0$ with
 \begin{equation*}
 \tilde{p}_i=o(\frac{1}{\sigma}\log \frac{\ve D}{\xi_\sigma}).
 \end{equation*}

 Thus we have proved the following proposition:
 \begin{proposition}\label{pro1a}
 For $\max\{\sigma, D\}$ small enough, there exists ${\bf p}^\ve\in \Lambda_k$ with $P_i\to P^0$ such that $W_{\ve,i}=0$.
 \end{proposition}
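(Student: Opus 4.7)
The plan is to solve the finite-dimensional reduced system $W_{\ve,i}=0$, $i=1,\ldots,k$, whose leading-order expression has already been computed in this section: each $W_{\ve,i}$ decomposes into a repulsive inter-spike contribution of size $\xi_\sigma^2\sigma G_0'(\sigma|p_i-p_j|)$, an attractive curvature contribution of size $\nu_1\ve^3 h''(p^0)(p_i-p^0)$, and a remainder $o(\xi_\sigma\sigma\ve D\log(\xi_\sigma/\ve D))$. The strategy is the standard two-step approach: first solve the limiting problem, then perturb by a contraction/implicit function argument.

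First, I would solve the limiting system exactly, i.e.\ setting the lower-order remainder to zero. Since $G_0(r)=-\frac{1}{\pi}\log r+c_1+\ldots$ near $r=0$, one has $G_0'(r)\sim -\frac{1}{\pi r}$, and only nearest-neighbour terms contribute to leading order at the prescribed spacings in $\Lambda_k$. Summing the first $i$ equations one obtains a telescoping relation whose balance pins down both the centre of mass, $\sum_{i=1}^k p_i^0=kp^0$, and the neighbour distances via the implicit equation $\sigma(p_{i+1}^0-p_i^0)=\log(\xi_\sigma/\ve D)-\frac{3}{2}\log\log(\xi_\sigma/\ve D)-\log(-h''(p^0)\nu_1/(2\nu_2))-\log[(i-1)(k+1-i)]+\ldots$; the combinatorial factor $(i-1)(k+1-i)$ arises naturally from the partial sums of the telescope. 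This produces the unique ordered configuration $\{p_i^0\}$ consistent with the defining inequalities of $\Lambda_k$.

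Next I would write $p_i=p_i^0+\tilde p_i$ and substitute into the full equation. Expanding $G_0'(\sigma|p_i-p_{i+1}|)$ around $\sigma|p_i^0-p_{i+1}^0|$ and using $h(\ve p_i)=h(\ve p^0)+\ve h''(p^0)(p_i-p^0)+O(\ve^2)$, the system becomes linear in $\tilde p$ to leading order, with Jacobian of two types of entries: a tridiagonal block of size $\xi_\sigma\sigma^2 G_0''(\sigma|p_i^0-p_{i+1}^0|)$ from the interaction and a diagonal block of size $\nu_1\ve^3 h''(p^0)$ from the curvature. At the limiting configuration these scales match, and because $h''(p^0)<0$ while $G_0''>0$, both contributions have the same sign, producing a strictly diagonally dominant matrix whose inverse is bounded uniformly in $\max\{\sigma,D\}$. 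The contraction mapping principle (or, equivalently, the implicit function theorem) on a ball of radius $o(\sigma^{-1}\log(\ve D/\xi_\sigma))$ then provides a unique fixed point $\tilde{\bf p}$, giving the desired ${\bf p}^\ve\in\Lambda_k$.

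The main technical obstacle is the uniform quantitative invertibility of this Jacobian as $\max\{\sigma,D\}\to 0$: the two competing scales are comparable only because the neighbour spacing has been tuned logarithmically, so one must verify carefully that the error term $o(\sigma\ve D\log(\xi_\sigma/\ve D))$ is strictly dominated by the smallest eigenvalue of the Jacobian. Once diagonal dominance is checked with explicit constants coming from the positivity of $\nu_1,\nu_2$ and the strict sign of $h''(p^0)$, the contraction closes and the unique solvability of $W_{\ve,i}=0$ follows, completing the proof of the proposition and hence of Theorem~\ref{existenceth}.
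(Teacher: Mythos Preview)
Your overall plan coincides with the paper's: first solve the limiting system to obtain ${\bf p}^0$, then perturb. However, there is a genuine error in your treatment of the Green's function. You invoke the short-range expansion $G_0(r)=-\frac{1}{\pi}\log r+c_1+\ldots$ and deduce $G_0'(r)\sim-\frac{1}{\pi r}$, but the inter-spike distances satisfy $\sigma|p_i-p_j|\sim\log(\xi_\sigma/\ve D)\to\infty$, so it is the \emph{large}-$r$ asymptotics $G_0(r)\sim c\,r^{-1/2}e^{-r}$ (recall $G_0\propto K_0$) that matter. With only algebraic $1/r$ decay your claim that ``only nearest-neighbour terms contribute'' would be false, and the spacing formula you then quote---in particular the $-\tfrac{3}{2}\log\log$ correction, which comes precisely from the $r^{-1/2}$ prefactor of $K_0'$---cannot be derived from the logarithmic singularity. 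The positivity of $G_0''$ that you use for diagonal dominance likewise relies on the exponential regime.

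On the perturbation step, your diagonal-dominance bound gives only $\|J^{-1}\|\lesssim(\nu_1\ve^3|h''(p^0)|)^{-1}$: the margin is the small curvature scale $\ve^3$, not the interaction scale $\ve^3\log(\xi_\sigma/\ve D)$. A direct contraction on a ball of radius $\eta\sigma^{-1}\log(\xi_\sigma/\ve D)$ then does not close, because the quadratic remainder from $G_0'''$ produces a Lipschitz constant of order $(\log(\xi_\sigma/\ve D))^2\gg1$. The paper avoids this by exploiting the two-scale structure explicitly: it telescopes the first $j$ equations so that only $G_0''(\sigma|p_j^0-p_{j+1}^0|)\,\sigma(\tilde p_j-\tilde p_{j+1})$ survives from the interaction, which forces $\tilde p_i-\tilde p_{i+1}=O(\sigma^{-1})$ and hence $\tilde p_i=\tilde p_1(1+o(1))+O(\sigma^{-1})$; it then uses the sum of all $k$ equations---where the interaction telescopes to zero and only the curvature term remains---to obtain $\tilde p_1=o(\sigma^{-1}\log(\xi_\sigma/\ve D))$. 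Your argument can be repaired along the same lines (separate the differences from the mean), but not by diagonal dominance alone.
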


 Finally, we complete the proof of Theorem \ref{existenceth}.

 {\em Proof}. By Proposition \ref{pro1a}, there exists ${\bf P}^\ve\to {\bf P}^0$, such that $W_{\ve}({\bf p}^\ve)=0$. In other words, we have
  \begin{equation}
  S_\ve\vect{U+\phi}{V+\psi}=0.
 \end{equation}
 Moreover, by the maximum principle, $(U,V)>0$ and the solution satisfies all the properties of Theorem \ref{existenceth}.

 \qed

\section{Study of the large eigenvalues}
\setcounter{equation}{0}

We consider the stability of the steady-state $(u,v)$ constructed in Theorem 2.1.

In this section,
we first study the large eigenvalues which satisfy $\lambda_\ve\to \lambda_0\neq 0$ as $\max\{\sigma,D\}\to 0$.

Linearizing the system around the equilibrium states $(u,v)$  obtained in Theorem \ref{existenceth}, we obtain the following eigenvalue problem:
\begin{equation}
\label{equ1}
\left\{\begin{array}{l}
\Delta \phi-\phi+\frac{2u}{v}\phi-\frac{u^2}{v^2}\psi=\lambda \phi,\\[2mm]
\Delta \psi-\sigma^2\psi+2u\phi=\tau \lambda \sigma^2\psi
\end{array}
\right.
\end{equation}
for $(\phi,\psi)\in H^2_N(\Omega_\ve)\times H^2_N(\Omega_\ve)$.

In this section, since we study the large eigenvalues, we may assume that $|\lambda_\ve|\geq c>0$ for $\max\{\sigma, D\} $ small enough. If $\mbox{Re}(\lambda_\ve)\leq -c<0$, then $\lambda_\ve$ is a stable large eigenvalue, we are done. Therefore, we may assume that $\mbox{Re}(\lambda_\ve)\geq -c$ and for a subsequence $\max\{\sigma,D\}\to 0$, $\lambda_\ve\to \lambda_0\neq 0$. We shall derive the limiting eigenvalue problem which is given by a coupled system of NLEPs.

The second equation of (\ref{equ1}) is equivalent to
\begin{equation}
\Delta \psi-\sigma^2(1+\tau \lambda_\ve)\psi+2u\phi=0.
\end{equation}

We introduce the following notation:
\begin{equation*}
\sigma_\lambda=\sigma \sqrt{1+\tau \lambda_\ve},
\end{equation*}
where in $\sqrt{1+\tau \lambda_\ve}$, we take the principal part of the square root.

Let us assume that
\begin{equation*}
\|\phi\|_{H^2(\Omega_\ve)}=1.
\end{equation*}
We cut off $\phi$ as follows:
\begin{equation}
\phi_{\ve,j}=\phi_{\ve}\chi_\ep(z-p_j), \ j=1,\cdots,k,
\end{equation}
where the cutoff function $\chi_\ep$ has been defined in (\ref{chive}).

From (\ref{equ1}) and the exponential decay of $w$, it follows that
\begin{equation}
\phi_{\ve}=\sum_{j=1}^k\phi_{\ve,j}+O(\ve^5).
\end{equation}

Then by a standard procedure (see \cite{gt}, Section 7.12), we extend $\phi_{\ve,j}$ to a function defined on $\R^2$ such that
\begin{equation*}
\|\phi_{\ve,j}\|_{H^2(\R^2)}\leq C\|\phi_{\ve,j}\|_{H^2(\Omega_\ve)}, \ j=1,\cdots,k.
\end{equation*}

Since $\|\phi_\ve\|_{H^2(\Omega_\ve)}=1$, $\|\phi_{\ve,j}\|_{H^2(\R^2)}\leq C$ . By taking a subsequence, we may assume that $\phi_{\ve,j}\to \phi_j$ as $\max\{\sigma, D\}\to 0$ in $H^1(\R^2)$ for some $\phi_j\in H^1(\R^2)$ for $j=1,\cdots,k$.

By (\ref{equ1}), we have
\begin{eqnarray*}
\psi_{\ve}(p_j)&&=\int_{\Omega_\ve}G_{\sqrt{D}}(\sigma_\lambda p_j,\sigma_\lambda x)2u\phi_{\ve}(x)\,dx\\
&&=\int_{\Omega_\ve}G_{\sqrt{D}}(\sigma_\lambda p_j,\sigma_\lambda x)
2(\sum_{j=1}^k\xi_{\sigma,j}Pw_j\phi_{\ve,j}+O(\xi_\sigma^2))\,dx\\
&&=\frac{1}{\pi}\log\frac{1}{\sigma_\lambda}\int_{\R^2}2\xi_{\sigma,i}w_i\phi_{\ve,i}(1+o(1))\,dx.
\end{eqnarray*}

Substituting the above equation into the first equation of  (\ref{equ1}), letting $\max\{\sigma,D\}\to 0$, and using the expansion of $\xi_{\sigma,j}$, we arrive at the following nonlocal eigenvalue problem (NLEP):
\begin{equation}\label{e:20}
\Delta \phi_j-\phi_j+2w\phi_j-\frac{2}{1+\tau\lambda}\,
\frac{\int_{\R^2_+}w\phi_j\,dx}{\int_{\R^2_+}w^2\,dx}w^2=\lambda_0\phi_j, \ j=1,\cdots,k.
\end{equation}
By Theorem 3.5 in \cite{ww-book}, (\ref{e:20}) has only stable eigenvalues if $\tau$ is small enough.

In conclusion, we have shown that the large eigenvalues of the $k$-peaked solutions given in Theorem 2.1 are all stable if $\tau$ is small enough.

\section{Study of the small eigenvalues}
\setcounter{equation}{0}

Now we study the eigenvalue problem (\ref{equ1}) with respect to small eigenvalues. Namely, we assume that $\lambda_\ve \to 0$ as $\max\{\sigma,D\}\to 0$. We will show that the small eigenvalues in leading order are related to the matrix $\mathcal{M}({\bf p}^0)$ given in (\ref{mp0}) which is computed from the Green's function. Our main result in this section says that if $\lambda_\ve\to 0$, then in leading order
\begin{equation}
\lambda_\ve\sim \xi_\sigma \sigma_0,
\end{equation}
where $\sigma_0$ is an eigenvalue of $\mathcal{M}({\bf p}^0)$.
We will show that all the eigenvalues of $\mathcal{M}({\bf p}^0)$ have negative real part provided that the eigenvector is orthogonal to
$(1,1,...,1)^T$.

However, for the eigenvector $(1,1,...,1)^T$ the eigenvalue of $\mathcal{M}({\bf p}^0)$ is zero, the leading order term in the eigenvalue expansion vanishes and the next order term is needed to prove stability. To establish it we have to compute the contribution from the boundary curvature.
It follows that for a local maximum point of the boundary curvature this eigenvalue has negative real part. Whereas the Green's function part is of order
$\ep^3 \log\frac{\xi_\sigma}{\ep D}$,
the part from the boundary curvature is of order $\ep^3$.
Thus the small eigenvalues of (\ref{equ1}) are all stable.

To compute the small eigenvalues, we need to expand the spike cluster solution to higher order. Then we expand the eigenfunction and compute the small eigenvalues. This will be done in Appendix B.
The key estimates are given in Lemma \ref{lemma4}.

We compute the small eigenvalues using Lemma \ref{lemma4}. Comparing l.h.s and r.h.s, we obtain
\begin{eqnarray}
\label{eig11}
-\nu_2\xi_{\sigma}^2\mathcal{M}({\bf p}^0){\bf a}_\ve(1+o(1))=\lambda_\ve\xi_\sigma {\bf a}_\ve\int_{\R^2_+}(\frac{\partial w}{\partial y_1})^2\,dy(1+o(1)),
\end{eqnarray}
where $\nu_2$ has been defined in (\ref{nu2}).
Further, we have
\begin{eqnarray}
\label{mp0}
\mathcal{M}({\bf p}^0)=(m_{ij}({\bf p}^0))_{i,j=1}^k,
\end{eqnarray}
where
\begin{eqnarray*}
m_{ij}({\bf p})&&=\Big[[\nabla^2_{\tau(p_i)}G_{0}(\sigma p_i,\sigma p_{i-1})+\nabla^2_{\tau(p_i)}G_{0}(\sigma p_i,\sigma p_{i+1})]\delta_{ij}\\
&&
-[\delta_{i,j+1}\nabla^2_{\tau(p_i)}G_{0}(\sigma p_i,\sigma p_{i-1})+\delta_{i,j-1}\nabla^2_{\tau(p_i)}G_{0}(\sigma p_i,\sigma p_{i+1})]\Big].
\end{eqnarray*}

Using the estimates for $p^0_i$ in (\ref{e:32}) and (\ref{e:33}), we have
\begin{eqnarray*}
m_{ij}({\bf p}^0)&&=-\frac{h''(p^0)\nu_1}{2\nu_2}\frac{\ve D}{\xi_\sigma}\log\frac{\xi_\sigma}{\ve D}\sigma^2\Big[-(i-1)(k+1-i)\delta_{j,i-1}-i(k-i)\delta_{j,i+1}\\
&&\ \ \ \ \ \ \ \ \ \ \ \ \ \ \ \ \ \ \ \ +((i-1)(k+1-i)+i(k-i))\delta_{ij}\Big].
\end{eqnarray*}
This shows that if all the eigenvalues of $\mathcal{M}({\bf p}^0)$ have positive real part, then the small eigenvalues are stable. On the other hand, if $\mathcal{M}({\bf p}^0)$ has eigenvalues with negative real part, then there are eigenfunctions and eigenvalues to make the system unstable.
Next we study the spectrum of the $k\times k$ matrix $A$ defined by
\begin{eqnarray*}
&&a_{s,s}=(s-1)(k-s+1)+s(k-s), \quad s=1,\cdots,k,\\
&&a_{s,s+1}=a_{s+1,s}=-s(k-s), \quad s=1,\cdots,k,\\
&&a_{s,l}=0, \quad |s-l|>1.
\end{eqnarray*}
We have the following result from Lemma 16 in \cite{WW8}:
\begin{lemma}
\label{l71}
The eigenvalues of the matrix $A$ are given by
\begin{equation}
\lambda_n=n(n+1), \quad n=0,\cdots,k-1.
\end{equation}
\end{lemma}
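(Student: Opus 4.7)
The plan is to recognize $A$ as a discrete Sturm--Liouville operator and exploit a flag of invariant polynomial subspaces. Setting $c(s)=s(k-s)$, so that $c(0)=c(k)=0$, one checks directly against the stated entries of $A$ that for any $f\colon\{1,\dots,k\}\to\R$,
\[
(Af)_s=-c(s-1)f(s-1)+\bigl[c(s-1)+c(s)\bigr]f(s)-c(s)f(s+1),\qquad s=1,\dots,k,
\]
and the unspecified boundary values $f(0),f(k+1)$ are multiplied by the vanishing coefficients $c(0),c(k)$, so they drop out entirely. This allows one to extend the three-term expression to an operator $L$ acting on all of $\R[s]$.

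The key step is to show that $L$ preserves each subspace $\mathcal{P}_n:=\{P\in\R[s]:\deg P\le n\}$ and to read off the diagonal action. Inserting the Taylor expansions $P(s\pm 1)=\sum_{j\ge 0}(\pm 1)^jP^{(j)}(s)/j!$ into $(LP)(s)=c(s-1)(P(s)-P(s-1))-c(s)(P(s+1)-P(s))$ gives
\[
(LP)(s)=\bigl[c(s-1)-c(s)\bigr]P'(s)-\tfrac{1}{2}\bigl[c(s-1)+c(s)\bigr]P''(s)+\sum_{j\ge 3}\tfrac{1}{j!}\bigl[(-1)^{j+1}c(s-1)-c(s)\bigr]P^{(j)}(s).
\]
A direct expansion yields $c(s-1)-c(s)=2s-k-1$ (degree $1$) and $c(s-1)+c(s)=-2s^2+2ks+2s-k-1$ (degree $2$), while each bracket appearing for $j\ge 3$ has degree $\le 2$ and multiplies $P^{(j)}$ of degree $\le n-3$, so the total degree is $\le n$ everywhere. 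Hence $L(\mathcal{P}_n)\subseteq\mathcal{P}_n$. Reading off the coefficient of $s^n$ for $P(s)=s^n$, only $j=1,2$ contribute at top order: from $j=1$ one obtains $2s\cdot ns^{n-1}=2n\,s^n$, and from $j=2$ one obtains $-\tfrac{1}{2}(-2s^2)\cdot n(n-1)s^{n-2}=n(n-1)\,s^n$, summing to $[2n+n(n-1)]\,s^n=n(n+1)\,s^n$.

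To conclude, identify $\R^k$ with $\mathcal{P}_{k-1}$ by Lagrange interpolation at the nodes $s=1,\dots,k$: for $f\in\R^k$ let $P_f$ be the unique interpolant of degree $\le k-1$. Because $c(0)=c(k)=0$ makes $P_f(0)$ and $P_f(k+1)$ irrelevant, $(LP_f)|_{\{1,\dots,k\}}=Af$; combined with $L(\mathcal{P}_{k-1})\subseteq\mathcal{P}_{k-1}$, this shows that $A$ is conjugate to $L|_{\mathcal{P}_{k-1}}$. In the graded basis associated with the flag $\mathcal{P}_0\subset\mathcal{P}_1\subset\cdots\subset\mathcal{P}_{k-1}$ the operator $L|_{\mathcal{P}_{k-1}}$ is upper triangular with diagonal entries $0\cdot 1,\,1\cdot 2,\,\dots,\,(k-1)\cdot k$; these $k$ values being distinct, they constitute the entire spectrum of $A$. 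No serious obstacle arises here: the only point requiring a bit of care is checking that the boundary behaviour of the interpolating polynomial is compatible with the matrix $A$, and this is ensured precisely by the vanishing of $c$ at the endpoints.
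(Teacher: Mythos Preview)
Your proof is correct. The identification of $A$ with a second-order difference operator $L$ acting on polynomials, the verification that $L$ preserves the flag $\mathcal{P}_0\subset\cdots\subset\mathcal{P}_{k-1}$, and the computation of the diagonal entries $n(n+1)$ all check out. The only delicate point---that the Lagrange interpolant $P_f$ may take arbitrary values at $s=0$ and $s=k+1$---is handled exactly as you say, by the vanishing $c(0)=c(k)=0$, so the conjugacy $R\circ L|_{\mathcal{P}_{k-1}}=A\circ R$ (with $R$ the evaluation isomorphism) is genuine.

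By contrast, the paper does not prove this lemma at all: it simply cites Lemma~16 of \cite{WW8}. Your argument therefore supplies a self-contained proof where the paper only gives a reference. The approach you take---recognising $A$ as a discrete Sturm--Liouville operator of hypergeometric type and exploiting the invariant polynomial flag---is both natural and efficient; it explains structurally why the spectrum is $\{n(n+1)\}_{n=0}^{k-1}$ rather than merely verifying it, and in particular makes transparent why the eigenvalue $0$ corresponds to the constant vector $(1,\dots,1)^T$ (the case $n=0$, $P\equiv 1$), which is precisely what the paper needs in Section~7.
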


By Lemma \ref{l71}, the eigenvalues of $\mathcal{M}({\bf p}^0)$ are all positive except for a single eigenvalue zero with eigenvector $(1,1,\ldots,1)^T$.

Equation (\ref{eig11}) shows that the small eigenvalues $\lambda_\ve$ are
\begin{equation}
\lambda_\ve\sim -\frac{\nu_2\xi_\sigma}{\int_{\R^2_+}(\frac{\partial w}{\partial y_1})^2\,dy}\sigma(\mathcal{M}({\bf p}^0)).
\end{equation}

We remark that the scaling of these small eigenvalues is
\[
\lambda_\ep \sim c_5 \ep^3 \log\frac{\xi_\sigma}{\ep D}
\]
for some $c_5<0$.

However, one of the eigenvalues of $\mathcal{M}({\bf p}^0)$
is exactly zero, with eigenvector $(1,1,\ldots,1)^T$. To determine the sign of the real part of this eigenvalue, we have to expand to the next order. By considering contributions for the curvature of the boundary $\partial\Om$ we have computed in Appendix B that for this eigenvalue
\[
\lambda_\ep \sim c_6 \ep^3
\]
for some $c_6<0$.

To summarise, there are small eigenvalues of two orders which differ by the logarithmic factor $\log\frac{\xi_\sigma}{\ep D}$.

 \section{Appendix A: Linear Theory}
 \setcounter{equation}{0}

In this section we prove Proposition \ref{linboun}.
 We follow the Liapunov-Schmidt reduction method. Suppose that to the contrary, there
 exist sequences $\ve_n, \sigma_n, {\bf p}_n$ and $\Sigma_n$ with $\ve_n,\sigma_n\to 0$ and $\Sigma_n=\vect{\phi_{n}}{\psi_n}\in \mathcal{K}_{\ve_n,{\bf p}_n}^\perp$ such that
 \begin{equation}
 \left\{\begin{array}{l}
 \Delta \phi_n-\phi_n+\frac{2U}{V}\phi_n-\frac{U^2}{V^2}\psi_n=f_n,\\[2mm]
 \Delta \psi_n-\sigma^2\psi_n+2U\phi_n=g_n,
 \end{array}
 \right.
 \end{equation}
 where
 \begin{equation}
 \|\pi_{\ve,p}\circ f_n\|_{L^2(\Omega_\ve)}\to 0,
 \end{equation}
 \begin{equation}
 \|\xi_{\sigma}^{-1}g_n\|_{L^2(\Omega_\ve)}\to 0
 \end{equation}
 and
 \begin{equation}
 \|\phi_n\|_{H^2(\Omega_\ve)}+\|\psi_n\|_{H^2(\Omega_\ve)}=1.
 \end{equation}
 We now show that this is impossible. To simplify notation, we omit the index $n$. In the first step we show that the linearised problem given above tends to a limit problem as $\max\{\sigma, D\}\to 0$.

 We define
 \begin{equation}
 \phi_{\ve,i}=\phi(x)\chi(x-p_i) \mbox{ for }i=1,\cdots,k,
 \end{equation}
 and
 \begin{equation}
 \phi_{\ve,k+1}=\phi_{\ve}-\sum_{i=1}^k\phi_{\ve,i}.
 \end{equation}
 It is easy to see that $\phi_{\ve,k+1}=o(1)$ in $H^2(\Omega_\ve)$, since it satisfies
 \begin{equation*}
 \Delta \phi_{\ve,k+1}-\phi_{\ve,k+1}=o(1) \mbox{ in }H^2(\Omega_\ve).
 \end{equation*}
 We define $\psi_{\ve,i}$ by
 \begin{equation}
 \left\{\begin{array}{l}
 \Delta \psi_{\ve,i}-\sigma^2\psi_{\ve,i}+2U\phi_{\ve,i}=0, \mbox{ in }\Omega_\ve,\\
 \frac{\partial \psi_{\ve,i}}{\partial \nu}=0 \mbox{ on }\partial \Omega_\ve.
 \end{array}
 \right.
 \end{equation}
 Note that since $\xi_{\sigma}^{-1}\|g_n\|_{L^2(\Omega_\ve)}\to 0$, we also have $\|\psi_{\ve,k+1}\|_{H^2(\Omega_\ve)}=o(1)$.

Next by the equation satisfied by $\psi_{\ve}$, we have
\begin{equation}
\psi_{\ve,n}(x)=\int_{\Omega_\ve}G_{\sqrt{D}}(\sigma x,\sigma y)(2U\phi_{\ve}-g_n)(y)\,dy.
\end{equation}
 So at $x=p_i$, we calculate
 \begin{eqnarray*}
 \psi_{\ve}(p_i)&=&\int_{\Omega_\ve}G_{\sqrt{D}}(\sigma p_i,\sigma y)(2U\phi_{\ve}-g_n)(y)\,dy\\
 &=&\int_{\Omega_\ve}[\frac{1}{\pi}\log\frac{1}{\sigma|p_i-y|}+\tilde{H}(\sigma p_i,\sigma y)]2U\phi_{\ve}(y)\,dy\\
 &+&\int_{\Omega_\ve}G_{\sqrt{D}}(\sigma p_i,\sigma y)g_n(y)\,dy\\
 &=&2\int_{\R^2_+}w(y)\phi_{\ve,i}\,dy(1+o(1))+O(\log\frac{1}{\sigma}\|g_n\|_{L^2(\Omega_\ve)})\\
 &=&2\int_{\R^2_+}w(y)\phi_{\ve,i}\,dy+o(1).
 \end{eqnarray*}

Substituting the above equation into the first equation of (\ref{equ1}), letting $\max\{\sigma, D\}\to 0$, we can show that
\begin{equation}
\phi_{\ve,i}\to \phi_i \mbox{ in }H^2(\R^2_+),
\end{equation}
and
\begin{equation}
\phi_i\in \{\phi\in H^2(\R^2_+)|\int_{\R^2_+}\phi\frac{\partial w}{\partial y_1}\,dy=0\}:=K_0^\perp,
\end{equation}
where $\phi_i$ is solution of the following nonlocal problem:
\begin{equation}
\Delta \phi_i-\phi_i+2w\phi_i-\frac{2\int_{\R^2_+}w\phi_i\,dy}{\int_{\R^2_+}w^2\,dy}w^2(y)\in C_0^\perp,
\end{equation}
 where $C_0^\perp, K_0^\perp$ denote the orthogonal complements with respect to the scalar product of $L^2(\R^2_+)$ in the space of $H^2(\R^2_+)$ and $L^2(\R^2_+)$ respectively.

By Theorem 1.4 in \cite{W4}, we know that $\phi_i=0 ,i=1,\cdots,k$.

By taking the limit in the equation satisfied by $\psi_{\ve}$, we see that this implies that $\psi_{\ve}\to 0$ in $H^2(\Omega_\ve)$. This contradicts the assumption
\begin{equation}
 \|\phi_n\|_{H^2(\Omega_\ve)}+\|\psi_n\|_{H^2(\Omega_\ve)}=1.
 \end{equation}
 This proves the boundedness of the linear operator $\mathcal{L}_{\ep,{\bf p}}$.

To complete the proof of Proposition \ref{linboun}, we just need to show that conjugate operator to $\mathcal{L}_{\ep,{\bf p}}$ (denoted by $\mathcal{L}_{\ep,{\bf p}}^*$) is injective from $\mathcal{K}_{\ep,{\bf }}^{\perp}$ to $\mathcal{C}_{\ep,{\bf p}}^{\perp}$. The proof for $\mathcal{L}_{\ep,{\bf p}}^*$ follows almost the same process as for $\mathcal{L}_{\ep,{\bf p}}$ and therefore it is omitted.

The proof is complete.

\section{Appendix B: Computation of the small eigenvalues}

In this appendix we will compute the small eigenvalues. First we expand the solution to a higher degree of accuracy than in Section 3. Then we expand the eigenfunctions and finally we calculate the small eigenvalues.

\subsection{Further expansion of the solution}

In this subsection, we further improve our expansion to the solutions derived in Section 3.

First we define
\begin{equation*}
u_i(x)=u_\ep(x)\chi_\ve(x-p_i), \ i=1,\cdots,k,
\end{equation*}
where $u_\ep$ is the exact boundary cluster solution derived in Section 3-5 and $\chi_\ve $ is the cutoff function given in (\ref{chive}).
It is easy to see that
\begin{equation*}
u(x)=\sum_{i=1}^ku_i(x)+O(\ve^{5}),
\end{equation*}
We will derive an approximation to $u_i$ which is more accurate than that given in Section 3.

Our main idea is to start with a single boundary spike solution of the Gierer-Meinhardt system in a disk $B_R$ of radius $R$ such that the curvature at the centres of the boundary spikes on the disk and the domain $\partial \Om$ agree.
Further, the boundary spike solution in the disk is invariant under rotations of the solution which results in a zero eigenvalue.
Then the small eigenvalue of the boundary spike solution in $\Om$ can be computed by a perturbation analysis of the disk case.

The single boundary spike solution $(u_0,v_0)$
in the ball $B_R$ with $\max_{B_R}{u_0}=u_0(0,R)$ in polar coordinates solves the following system:
\begin{equation}
\left\{\begin{array}{l}
\ep^2 \Delta u_0-u_0+\frac{u_0^2}{T_R[u_0^2]}=0
\quad \mbox{ in }B_R,
\\[2mm]
\frac{\partial u_0}{\partial r}=0
\quad \mbox{ on }\partial B_R,
\label{u0def}
\end{array}\right.
\end{equation}
where
$T_R[u_0^2]=v_0$ is the solution of the inhibitor equation
\begin{equation}
\left\{\begin{array}{l}
D\Delta v_0-v_0+u_0^2=0\quad \mbox{ in }B_R,
\\[2mm]
\frac{\partial v_0}{\partial r}=0
\quad \mbox{ on }\partial B_R.
\end{array}\right.
\label{v0def}
\end{equation}
It can be constructed from the ground state $w$ following the approach in Section 3
by using a fixed-point argument in the space of even functions around $\alpha=0$ and no Liapunov-Schmidt reduction is needed.

Note that the single-boundary spike solution $u_0$ is invariant under rotations of the solution.
Therefore we can apply $\frac{\partial}{\partial\alpha}$ in
(\ref{u0def}) and get
\begin{equation}
\left\{\begin{array}{l}
\ep^2 \Delta \frac{\partial u_0}{\partial \alpha}-\frac{\partial u_0}{\partial \alpha}+\frac{2u_0\frac{\partial u_0}{\partial \alpha}}{T_R[u_0^2]}- \frac{u_0^2}{(T_R[u_0^2])^2}
\frac{\partial}{\partial\alpha}T_R[u_0^2]=0
\quad \mbox{ in }B_R,
\\[2mm]
\frac{\partial^2 u_0}{\partial \alpha\partial r}=0
\quad \mbox{ on }\partial B_R.
\end{array}\right.
\label{u0der}
\end{equation}

As a preparation for this perturbation analysis, we represent the boundary $\partial\Om$ in a neighbourhood of the centre of the spike $p_i$ in polar coordinates and deform it to a circle with the same curvature.
This will imply that the perturbation of the boundary will only be in order $\ep^3$.

Near the point $p_i$ we have expanded the boundary $\partial\Om$ in Section 3.1 using Cartesian coordinates. We have derived that
$\rho(0)=\rho'(0)=0$ and $\rho''(0)$ is the curvature of $\partial\Om$ at $p_i$.
Recall that $\rho$ was used to flatten the boundary $\partial\Om$ near $p_i$ to a line.

 Using polar coordinates $(\phi,r)$ such that $x_1-p_1=r\sin\phi$, $x_2-p_2=R-r\sin\phi$,
the boundary $\partial\Om$ can be represented as
\[
r=f(\phi).
\]
The radius $R$ will be chosen such that
$\rho''(0)=\frac{1}{R}.$
We now derive the function $f(\phi)$ from the  function $\rho(x_1-p_1)$ introduced in Section 3.1.

Substituting the Taylor expansion of $\rho$ given in equation (\ref{taylor})
into $f(\phi)^2$ gives
\[
f(\phi)^2=(x_1-p_1)^2+(x_2-p_2)^2
=r^2\sin^2\phi+(R-\rho(r\sin\phi))^2
\]
\[
=R^2+r^2\sin^2\phi
-2R\rho(r\sin\phi)+\rho^2(r\sin\phi).
\]
We expand
\[
f(\phi)=R+\alpha\phi+\beta\phi^2
+\gamma\phi^3+\delta\phi^4,
\]
and compute the coefficients $\alpha,\,\beta,\,\gamma,\,\delta$ by matching powers $\phi^i$ for $i=1,2,3,4$.
First we get
\[\alpha=0.\]
Second we have
\[ 2\beta=R(1-\rho''(0)R) \]
which implies that
\[ \beta=0 \mbox{ provided } \rho''(0)=\frac{1}{R} \]
and from now on we choose $R$ such that this condition is satisfied.
Third we compute
\[ 2R\gamma=-\frac{1}{3} \rho^{(3)}(0)R^4 \]
which gives
\[
\gamma=-\frac{1}{6} \rho^{(3)}(0)R^3.
\]
Finally, we get
\[
2R\delta=-\frac{1}{3}R^2+\frac{1}{3}R^2
-\frac{1}{12}\rho^{(4)}(0)R^5
+\frac{1}{4}\left(\frac{1}{R}\right)^2 R^4
\]
which gives
\[
\delta=-\frac{1}{24}\rho^{(4)}(0)R^4
+\frac{1}{8} R.
\]
To summarise, we have
\[
f(\phi)=R+\frac{1}{6}f^{(3)}(0)\phi^3+\frac{1}{24}f^{(4)}(0)\phi^4+O(\phi^5),
\]
where $f(\phi)$ denotes the radius and $\phi$ the angle and
\[
f^{(3)}(0)=- \rho^{(3)}(0)R^3,
\]\[
f^{(4)}(0)=-\rho^{(4)}(0)R^4+3R.
\]
The point $\phi=0$ with $f(0)=R$
and $f'(0)=f''(0)=0$ corresponds to $p_i$.
Note that $f(0)=R$ and $f'(0)=0$ follow from $\rho(0)=\rho'(0)=0$.

Further, we have $f''(0)=0$ due to the choice of the leading term $f(0)=R$. Since we have $f(\phi)=r$, we get $r=R+O(\phi^3)$ which enables us to replace $r$ by $R$ in the derivation of $f(\phi)$.

In polar coordinates, for $r=R$ we get a point on $\partial B_R$.
We change variables such that for the variables $(\phi,r')$ at $r'=R$ we get a point on $\partial\Om$. Thus $r'=R$ has to map into $r=f(\phi)$. This is achieved by defining $r'=r-f(\phi)+R$ for each $\phi$.

Then the Laplacian is transformed as follows:
\[
\Delta=\Delta' +\frac{1}{r^2}(f'(\phi)^2)\partial_{\phi\phi} -\frac{2}{r^2}f'(\phi)\partial_{r\phi}
-\frac{1}{r^2} f''(\phi)\partial_{\phi},
\]
using partial derivatives $\partial_r=\frac{\partial}{\partial r}$ etc. and
\[
\Delta=
\frac{1}{r}\partial_r\left(r\partial_r\right)
+\frac{1}{r^2}\partial_{\phi\phi},
\]
\[
\Delta'=
\frac{1}{r'}\partial_{r'}\left(r'\partial_{r'}
\right)+\frac{1}{r'^2}\partial_{\phi\phi}
\]
for the transformed variables $(\phi,r')$.

We introduce rescaled variables $(\alpha,b)$ inside the spike such that
\[\ep\alpha=\phi,\quad \ep b=R-r'.\]
Then in rescaled variables $(\alpha,b)$ we have
\[
g(\ep\alpha)=R-f(\ep\alpha)=
\frac{1}{6}\rho^{(3)}(0)R^3\ep^3\alpha^3
+ \frac{1}{24}\rho^{(4)}(0)R^4\ep^4\alpha^4
-\frac{1}{8}R\ep^4\alpha^4+O(\ep^5\alpha^5).
\]
This implies that in rescaled variables we get
\[
\Delta=\Delta' +\frac{1}{r^2}(g'(\ep\alpha)^2)\partial_{\alpha\alpha} -
\frac{2}{r^2}g'(\ep\alpha)\partial_{b\alpha}
+\frac{\ep}{r^2} g''(\ep\alpha)\partial_{\alpha}.
\]
Using
\[
g'(\ep\alpha)=\frac{1}{2}g^{(3)}(0)\ep^2\alpha^2
+\frac{1}{6}g^{(4)}(0)\ep^3\alpha^3
+O(\ep^4\alpha^4)
\]
and
\[
g''(\ep\alpha)= g^{(3)}(0)\ep\alpha
+\frac{1}{2}g^{(4)}(0)\ep^2\alpha^2
+O(\ep^3\alpha^3),
\]
we have
\[
g'(\ep\alpha)=O(\ep^2\alpha^2)
\]
and
\[
g''(\ep\alpha)= O(\ep\alpha).
\]
 Thus second, third and fourth terms in the Laplacian are small and they can be estimated as follows:
$O(\ep^4\alpha^4)$, $O(\ep^2\alpha^2)$ and $O(\ep^2 \alpha)$, respectively.


 Comparing with Cartesian coordinates $(z_1,z_2)$ for the $\ep$-scale inside the spike used in Section 3.1, by elementary trigonometry we get
 \[
 \ep (z_2-p_2)=R-(R-\ep b)\cos(\ep\alpha)
 \]\[
 \sim \ep b+\ep^2\frac{1}{2}R\alpha^2-\ep^3 \frac{1}{2}b\alpha^2+O(\ep^4)
 \]
 and
\[
\ep (z_1-p_1)=(R-\ep b)\sin(\ep\alpha)
\]\[
\sim \ep R\alpha
-\ep^2 \alpha b-\ep^3\frac{1}{6} R\alpha^3 +O(\ep^4).
\]

Now we approximate the exact solution for the activator $u_\ep$  as follows:
\begin{equation}
\label{uappr}
u_\ep=\bar{w}+\ep^3  \bar{v}^{(1)}+\ep^2 (\bar{v}^{(2)}+ \bar{v}^{(3)}
)+O(\ep^4).
\end{equation}
Near the centre $p_i$ of each spike
we have
\begin{equation}
\label{uapprloc}
u_i=\xi_{\sigma,i}\bar{w}_i+\ep^3 \xi_{\sigma,i} \bar{v}_i^{(1)}+\ep^2 \xi_{\sigma,i} (\bar{v}_i^{(2)}+ \bar{v}_i^{(3)}
)+O(\ep^4),
\end{equation}
where we have used the notation
\begin{equation}
 \bar{w}_i(x)=\xi_{\sigma,i}^{-1}\bar{w}(x-p_i)\chi_{\ep}(x-p_i),
 \quad
 \bar{v}_i^{(j)}(x)=\xi_{\sigma,i}^{-1}\bar{v}^{(j)}(x-p_i)
 \chi_{\ve}(x-p_i)
\end{equation}
for $i=1,2,\ldots,k,\, j=1,2,3$
and $\chi_\ve $ is the cutoff function defined in (\ref{chive}).

Let us derive the terms in this expansion step by step.
We start from the single boundary spike solution $u_0$ defined in (\ref{u0def}) in a ball of radius $R$ such that $\rho''(0)=\frac{1}{R}$.
Using polar coordinates, we can represent
$u_0(\ep\alpha, R-r)$ and
this function satisfies the Neumann boundary condition in a ball:
\[
\left.\frac{\partial u_0}{\partial r}\right|_{r=R}=0.
\]
However, the function $u_0$ does not satisfy the boundary condition at $\partial \Omega$ (and for $r=R$ we reach the boundary of the disk (circle) but not the domain boundary $\partial\Om$).
Recall that $r-f(\phi)=r'-R$. Thus, to get a function with a better approximation to the boundary condition at $\partial \Omega$ (and such that for $r=R$ we reach $\partial\Om$), we define
\[
\bar{w}(\epsilon\alpha,r')
=u_0(\epsilon\alpha,r).
\]
Then for $r'=R$, we have
\[
\bar{w}(\epsilon\alpha,R)=u_0(\epsilon\alpha,f(\epsilon\alpha)),
\]
i.e. for $r'=R$ the arguments of the function $\bar{w}$ are contained in $\partial\Om$ and the arguments of $u_0$ are contained in $\partial B_R$.
This means that the function $f$ deforms the boundary to a circle (in the same way as in Section 3.1 $\rho$ deforms the boundary to a straight line). Since the circle also takes into account the curvature it gives a better approximation to the boundary than the straight line and the approximate spike solution will give a better approximation to the exact solution than the approximations in Section 3.1. Note that the boundary spike solution in the ball is invariant under rotation (in the same way as the boundary spike is translation invariant in half space).

Now we calculate the radial derivative of $\bar{w}$ as follows:
\begin{equation}
\frac{\partial\bar{w}(\epsilon\alpha, r')}{\partial r'}|_{r'=R}
=\frac{\partial u_0(\epsilon\alpha,r)}{\partial r}|_{r=f(r)}=0.
\label{barwboun}
\end{equation}
The outward unit normal vector in polar coordinates is given by
\[
\nu=\frac{1}{\sqrt{f'(\phi)^2+f(\phi)^2}}
(-f'(\phi),f(\phi)).
\]
Using rescaled variables, this implies
\[
\nu=\frac{1}{\sqrt{g'(\ep\alpha)^2+(R-g(\ep\alpha))^2}}
(g'(\ep\alpha),R-g(\ep\alpha))
\]
\[
=\frac{1}{\sqrt{
\frac{1}{4}g^{(3)}(0)^2\ep^4\alpha^4
+(R-\frac{1}{6}g^{(3)}(0)\ep^3\alpha^3)^2
+O(\ep^4\alpha^4)
}}
\]\[\times
(\frac{1}{2}g^{(3)}(0)\ep^2\alpha^2
+\frac{1}{6}g^{(4)}(0)\ep^3\alpha^3
+O(\ep^4\alpha^4),
R-\frac{1}{6}f^{(3)}(0)\ep^3\alpha^3)
\]
Subtracting the radial unit vector $e_r=(0,1)$, this implies
\[
\nu-e_r=\frac{(\frac{1}{2}g^{(3)}(0)\ep^2\alpha^2
+\frac{1}{6}g^{(4)}(0)\ep^3\alpha^3,
O(\ep^3\alpha^3))}{R+O(\ep^3\alpha^3)}.
\]
Using that $\|\bar{w}_i-w\|_{H^2(\Om_\ep)}=O(\ep\alpha)$ and
that $\bar{w}$ satisfies (\ref{barwboun}),
the outward normal derivative of $\bar{w}$ is computed as
\begin{equation}
\left\{\begin{array}{l}
\frac{\partial \bar{w}_i}{\partial \nu}
=
\left(
\frac{\partial \bar{w}_i}{\partial \nu}
-
\frac{\partial \bar{w}_i}{\partial r}
\right)
+
\frac{\partial \bar{w}_i}{\partial r}
\\[2mm]
=\frac{1}{R}w'(|y|)
(\frac{1}{2}g^{(3)}(0)\ep^2\alpha^2
+\frac{1}{6}g^{(4)}(0)\ep^3\alpha^3
+O(\ep^4\alpha^4)
).
\end{array}\right.
\label{barwb}
\end{equation}

Now we compute the terms
$\ep^3\bar{v}^{(1)}$ (even around $\alpha=0$) and $\ep^2\bar{v}^{(2)}$ (odd around $\alpha=0$) in the expansion (\ref{uapprloc})
near $p_i$
such that the solution satisfies the Neumann boundary condition to higher order.

Let $\bar{v}_i^{(1)}$ satisfy
\[
\left\{\begin{array}{l}
\Delta v-v=0 \quad\mbox{ in } B_R,
\\[2mm]
\frac{\partial v}{\partial b}=
\frac{1}{6R}g^{(4)}(0)\alpha^3w'(|y|)
\quad \mbox{ on }\partial B_R.
\end{array}\right.
\]
Let $\bar{v}_i^{(2)}$ be given by
\[
\left\{\begin{array}{l}
\Delta v-v=0 \quad\mbox{ in } B_R,
\\[2mm]
\frac{\partial v}{\partial b}=
\frac{1}{2R}g^{(3)}(0)\alpha^2w'(|y|)
\quad
\mbox{ on }\partial B_R.
\end{array}\right.
\]
Substituting the expansion (\ref{uappr}) of the solution in the activator
equation, we get
\[
S_1(\bar{w}+\ep^3\bar{v}^{(1)}+\ep^2\bar{v}^{(2)}+O(\ep^4),
T[\bar{w}+\ep^3\bar{v}^{(1)}+\ep^2\bar{v}^{(2)}+O(\ep^4)])\\
\]\[
=\sum_{i=1}^k\xi_{\sigma,i}
((\bar{w}_i+\ep^3\bar{v}_i^{(1)}
+\ep^2\bar{v}_i^{(2)}
+O(\ep^4))^2-\bar{w}_i^2)
+\sum_{i=1}^k\xi_{\sigma,i}^2 \bar{w}_i^2(\frac{1}{V(x)}-\frac{1}{V(p_i)})
+O(\xi_\sigma\ve^4).
\]

We calculate for $x=p_i+z$
\begin{eqnarray*}
[(\bar{w}_i(x)+\ep^3\bar{v}^{(1)}(x)
+\ep^2\bar{v}^{(2)}(x)
+O(\ep^4)]^2-\bar{w}_i^2(x)
&=&2\bar{w}_i(z)(
\ve^3\bar{v}_i^{(1)}(z)
+\ve^2\bar{v}_i^{(2)}(z)
+O(\ve^4))\\
&:=&\ve^3  \bar{R}_{1,i}(z)+\ve^2\bar{R}_{2,i}(z)+O(\ve^4),
\end{eqnarray*}
where $\bar{R}_{1,i}(z)=2\bar{w}_i(z)v_i^{(1)}(z)$,  $\bar{R}_{2,i}=2\bar{w}_i(z)v_i^{(2)}(z)$.
Further, we recall that by (\ref{4.4}) we have

\begin{equation}
\frac{1}{V(p_i+z)}-\frac{1}{V(p_i)}=\frac{1}{V(p_i)^2}(-\xi_{\sigma,i}^2R_1(z)-\xi_{\sigma,i}^2R_2(z)+h.o.t).
\end{equation}

By the reduced problem in Section 5, we have
\[\xi_{\sigma,i}\bar{R}_{2,i}-\xi_{\sigma,i}^2 \bar{w}_i^2 R_{2}
+O(\xi_\sigma\ep^3)
\perp\frac{\partial \bar{w}_i}{\partial\alpha}.
\]
Therefore we can add another contribution $\bar{v}_i^{(3)}$ to the solution such that
$\ve^2\bar{v}_i^{(3)}$ satisfies
\[
\left\{\begin{array}{l}
\tilde{L}^{(i)} v=
\ep^2\bar{R}_{2,i}-\xi_{\sigma,i} \bar{w}_i^2 R_{2}
+O(\xi_\sigma\ep^3)\quad\mbox{ in }\Om,
\\[2mm]
\frac{\partial v}{\partial\nu }=0
\quad\mbox{ on }\partial\Om,
\end{array}\right.
\]
where
\[\tilde{L}^{(i)}\phi=\Delta\phi-\phi
+\frac{2\xi_{\sigma,i}(\bar{w}_i
+\ep^2\bar{v}_i^{(2)})\phi}{
T\left[\xi_{\sigma,i}^2(\bar{w}_i
+\ep^2\bar{v}_i^{(2)}
)^2\right]}
\]\[
-\frac{\xi_{\sigma,i}^2(\bar{w}_i
+\ep^2\bar{v}_{i}^{(2)})^2}{
T\left[\xi_{\sigma,i}^2(\bar{w}_i
+\ep^2\bar{v}_{i}^{(2)})^2\right]^2}\,
T\left[2\xi_{\sigma,i}(\bar{w}_i
+\ep^2\bar{v}_{i}^{(2)})\phi\right]
.\]
Setting
\[
\bar{v}^{(3)}=\sum_{i=1}^k \xi_{\sigma,i}\bar{v}_i^{(3)}
\]
and adding this part to the solution will
cancel out the odd terms (with respect to $\alpha=0$) of order $\ep^2$ in the activator equation and we get
\[
S_1(\bar{w}+\ep^3\bar{v}^{(1)}+\ep^2(\bar{v}^{(2)}+\bar{v}^{(3)})+O(\ep^4),
T[\bar{w}+\ep^3\bar{v}^{(1)}+\ep^2(\bar{v}^{(2)}+\bar{v}^{(3)})+O(\ep^4)])\\
\]\[
=\sum_{i=1}^k 2\xi_{\sigma,i}
\ep^3 \bar{w}_i\bar{v}_i^{(1)}
+\sum_{i=1}^k\xi_{\sigma,i}^2 \bar{w}_{i}^2(-\frac{1}{(V(p_i))^2}
\frac{1}{2}V''(p_i)(x-p_i)^2)
+O(\ve^4).
\]
Taking the derivative $\frac{\partial}{\partial\alpha}$
in this relation near $x=p_i$, we compute
\[
\frac{\partial}{\partial\alpha}
S_1(\bar{w}+\ep^3\bar{v}^{(1)}
+\ep^2(\bar{v}^{(2)}+\bar{v}^{(3)})+O(\ep^4),
T[\bar{w}+\ep^3\bar{v}^{(1)}
+\ep^2(\bar{v}^{(2)}+\bar{v}^{(3)})+O(\ep^4)])
\]\[=
2\sum_{i=1}^k\xi_{\sigma,i}\frac{\partial \bar{w}_i(z)}{\partial \alpha}
(\ve^3\bar{v}_i^{(1)}(z)
+\ve^2(\bar{v}_i^{(2)}(z)+\bar{v}^{(3)}(z)))
\]\[
+2\sum_{i=1}^k\xi_{\sigma,i}\bar{w}_i(z)
\frac{\partial}{\partial\alpha}(
\ve^3\bar{v}_i^{(1)}(z)
+\ve^2(\bar{v}_i^{(2)}(z)+\bar{v}^{(3)}(z)))
\]\[
+\sum_{i=1}^k\xi_{\sigma,i}^2
2\bar{w}_i\frac{\partial \bar{w}_i}{\partial\alpha}
(\frac{1}{V(x)}-\frac{1}{V(p_i)})
\]\[
+\sum_{i=1}^k\xi_{\sigma,i}^2
\bar{w}_i^2
\frac{\partial}{\partial\alpha}
(\frac{1}{V(x)}-\frac{1}{V(p_i)})+O(\ve^4)
\]\[
= \sum_{i=1}^k 2\xi_{\sigma,i}
\ep^3
\left(
\frac{\partial\bar{w}_i}{\partial\alpha}
\bar{v}_i^{(1)}
+
\bar{w}_i
\frac{\partial\bar{v}_i^{(1)}}{\partial\alpha}
\right)
\]\[
+\sum_{i=1}^k\xi_{\sigma,i}^2 \bar{w}_{i}^2(-\frac{1}{(V(p_i))^2}
\frac{1}{2}V''(p_i)(x-p_i)^2)
+O(\ve^4).
\]
On the other hand,
\[
\frac{\partial}{\partial\alpha}
S_1(\bar{w}+\ep^3\bar{v}^{(1)}
+\ep^2(\bar{v}^{(2)}+\bar{v}^{(3)})+O(\ep^4),
T[\bar{w}+\ep^3\bar{v}^{(1)}
+\ep^2(\bar{v}^{(2)}+\bar{v}^{(3)})+O(\ep^4)])
\]\begin{equation}
=
L^{(i)}\left[\xi_{\sigma,i}(\frac{\partial \bar{w}_i}{\partial\alpha}
+\ep^3\frac{\partial\bar{v}_{i}^{(1)}}{\partial\alpha}
+\ep^2\frac{\partial\bar{v}_{i}^{(2)}}{\partial\alpha}
+\ep^2\frac{\partial\bar{v}^{(3)}}{\partial\alpha}
)\right]
+O(\ep^4),
\label{dalphs}
\end{equation}
where
\[L^{(i)}\phi=\Delta\phi-\phi
+\frac{2\xi_{\sigma,i}(\bar{w}_i+\ep^3 \bar{v}_{i}^{(1)}
+\ep^2\bar{v}_{i}^{(2)}
+\ep^2\bar{v}_{i}^{(3)}
)\phi}{
T\left[\xi_{\sigma,i}^2(\bar{w}_i+\ep^3 \bar{v}_{i}^{(1)}
+\ep^2\bar{v}_{i}^{(2)}
+\ep^2\bar{v}_{i}^{(3)}
)^2\right]}
\]\[
-\frac{\xi_{\sigma,i}^2(\bar{w}_i+\ep^3 \bar{v}_{i}^{(1)}
+\ep^2\bar{v}_{i}^{(2)}
+\ep^2\bar{v}_{i}^{(3)}
)^2}{
T\left[(\bar{w}_i+\ep^3 \bar{v}_{i}^{(1)}
+\ep^2\bar{v}_{i}^{(2)}
+\ep^2\bar{v}_{i}^{(3)})^2\right]^2}\,
T\left[2\xi_{\sigma,i}(\bar{w}_i+\ep^3 \bar{v}_{i}^{(1)}
+\ep^2\bar{v}_{i}^{(2)}
+\ep^2\bar{v}_{i}^{(3)}
)\phi\right]
.\]
Finally, by an expansion similar to (\ref{barwb}) we have
\[
\frac{\partial \bar{v}^{(3)}_i}{\partial\nu}
=O(\ep^3) \quad\mbox{ on }\partial\Om
\]
and
\[
\frac{\partial}{\partial\nu}
\frac{\partial \bar{v}^{(3)}_i}{\partial\alpha}=O(\ep^3)
\quad\mbox{ on }\partial\Om.
\]

\subsection{Expansion of the eigenfunction}
We define the approximate kernels to be
\begin{eqnarray*}
&&\mathcal{K}_{\ve,{\bf p}}:=\operatorname{Span}\{\frac{\partial u_i}{\partial \tau(p_i)}, i=1,\cdots,k\},\\
&&\mathcal{C}_{\ve,{\bf p}}:=\operatorname{Span}\{\frac{\partial u_i}{\partial \tau(p_i)}, i=1,\cdots,k\}.
\end{eqnarray*}
Then we expand the eigenfunction as follows:
\[
\phi_\ep=
\sum_{i=1}^k a_{i,\ve}\left(\frac{\partial \bar{w}_i}{\partial\alpha}
+\ep^3\bar{v}_{eig,i}^{(1)}
+\ep^2\bar{v}_{eig,i}^{(2)}
+\ep^2\frac{\partial \bar{v}_i^{(3)}}{\partial\alpha}\right)
+\phi_{\ep}^\perp
+O(\ep^4),
\]
\begin{equation}
\label{equation1}
:= \sum_{i=1}^k a_{\ve,i}\phi_{i,\ve}+\phi_{\ep}^\perp
+O(\ep^4),
\end{equation}
where $\phi_{\ep}^\perp\in \mathcal{K}_{\ve,{\bf p}}^\perp$.

Suppose that $\|\phi_\ve\|_{H^2(\Omega_\ve)}=1$. Then $|a_{j,\ve}|\leq C$.
Let us put
\begin{equation}\label{e:21a}
{\bf a}^\ve:=(a_{1,\ve},\cdots, a_{k,\ve})^T
\end{equation}
Then for a subsequence
and
\begin{equation}\label{e:22a}
a_{i,0}=\lim_{\ve\to 0}a_{i,\ve}, \ {\bf a}^0:=(a_{1,0},\cdots,a_{k,0}).
\end{equation}

The decomposition of $\phi_\ve$ in (\ref{equation1}) implies that
\begin{equation}
\psi_{\ve}=\sum_{i=1}^ka_{i,\ve}\psi_{i,\ve}+\psi_{\ve}^\perp,
\end{equation}
where $\psi_{i,\ve}$ satisfies
\begin{equation}
\left\{\begin{array}{l}
\Delta \psi_{i,\ve}-\sigma^2(1+\tau \lambda_\ve)\psi_{i,\ve}+2u\phi_{i,\ve}=0 \mbox{ in }\Omega_\ve\\
\frac{\partial \psi_{i,\ve}}{\partial \nu}=0  \mbox{ on }\partial \Omega_\ve
\end{array}
\right.
\end{equation}
which we also write as
$\psi_{i,\ve}=T_{\tau\lambda_\ve}[\phi_{i,\ve}]$,
and $\psi_\ve^\perp$ is given by
\begin{equation}
\left\{\begin{array}{l}
\Delta \psi_\ve^\perp-\sigma^2(1+\tau \lambda_\ve)\psi_{\ve}^\perp+2u\phi_\ve^\perp=0 \mbox{ in }\Omega_\ve\\
\frac{\partial \psi_\ve^\perp}{\partial \nu}=0  \mbox{ on }\partial \Omega_\ve
\end{array}
\right.
\end{equation}
which we also represent as
$\psi_\ve^\perp=T_{\tau\lambda_\ve}[\phi_\ve^\perp]$.

Let us first consider
the leading term of $\phi_\ve$.
 For $\frac{\partial \bar{w}_i}{\partial\alpha}$ we get, using
(\ref{barwb}),
\[
\left\{\begin{array}{l}
\Delta \frac{\partial \bar{w}_i}{\partial \alpha}
- \frac{\partial \bar{w}_i}{\partial \alpha}
+\frac{2 \bar{w}_i \frac{\partial \bar{w}_i}{\partial \alpha}}{T[\bar{w}_i^2]}
-\frac{\bar{w}_i^2}{(T[\bar{w}_i^2])^2}
\frac{\partial}{\partial \alpha}T[\bar{w}_i^2]=O(\ep^4\alpha^3)\quad \mbox{ in }\Om,
\\[2mm]
\frac{\partial}{\partial b}
\frac{\partial \bar{w}_i}{\partial\alpha}=
\frac{1}{R}\frac{\partial{w'(|y|)}}{\partial\alpha}
(\frac{1}{2}g^{(3)}(0)\ep^2\alpha^2
+\frac{1}{6}g^{(4)}(0)\ep^3\alpha^3)
 (1+O(\ep\alpha))
\quad \mbox{ on }\partial\Om.
\end{array}\right.
\]
Therefore, expanding the boundary condition as we did above for $\bar{w}_i$, we define $\bar{v}_{eig,i}^{(1)}$ as the unique solution of
\[
\left\{\begin{array}{l}
\Delta v-v=0 \quad \mbox{ in }B_R,
\\[2mm]
\frac{\partial v}{\partial b} =\frac{1}{6R}
\frac{\partial w'(|y|)}{\partial\alpha} g^{(4)}(0)\ep^3\alpha^3
\quad \mbox{ on }\partial B_R.
\end{array}\right.
\]
Similarly, let $\bar{v}_{eig,i}^{(2)}$ be the unique solution of
\[
\left\{\begin{array}{l}
\Delta v-v=0 \quad \mbox{ in }B_R,
\\[2mm]
\frac{\partial v}{\partial b} =\frac{1}{2R}
\frac{\partial w'(|y|)}{\partial\alpha} g^{(3)}(0)\ep^2\alpha^2
\quad \mbox{ on }\partial B_R.
\end{array}\right.
\]
Let us compare this with the derivative $\frac{\partial\bar{v}^{(1)}}{\partial\alpha}$ which satisfies
\[
\left\{\begin{array}{l}
\Delta v-v=0
\\[2mm]
\frac{\partial v}{\partial b} =\frac{\partial}{\partial\alpha}[\frac{1}{6R}
w'(|y|) g^{(4)}(0)\ep^3\alpha^3]
\quad \mbox{ on }\partial B_R.
\end{array}\right.
\]
Further,
$\frac{\partial\bar{v}^{(2)}}{\partial\alpha}$ solves
\[
\left\{\begin{array}{l}
\Delta v-v=0 \quad \mbox{ in }B_R,
\\[2mm]
\frac{\partial v}{\partial b} =\frac{\partial}{\partial \alpha}[\frac{1}{2R}w'(|y|) g^{(3)}(0)\ep^2\alpha^2]
\quad \mbox{ on }\partial B_R.
\end{array}\right.
\]

Using (\ref{dalphs}), we get
\[
L\left[
\sum_{i=1}^k a_{i,\ve}\left(\frac{\partial \bar{w}_i}{\partial\alpha}
+\ep^3\bar{v}_{eig,i}^{(1)}
+\ep^2\bar{v}_{eig,i}^{(2)}
+\ep^2\frac{\partial \bar{v}_i^{(3)}}{\partial\alpha}\right)
\right]
\]
\[
-L\left[
\sum_{i=1}^k a_{i,\ve}\left(\frac{\partial \bar{w}_i}{\partial\alpha}
+\ep^3\frac{\partial\bar{v}_{i}^{(1)}}{\partial\alpha}
+\ep^2\frac{\partial\bar{v}_{i}^{(2)}}{\partial\alpha}
+\ep^2\frac{\partial \bar{v}_i^{(3)}}{\partial\alpha}\right)
\right]
\]\[
=2\sum_{i=1}^k
a_{i,\ve}
\ep^3
\bar{w}_i
\bar{v}_{R,i}^{(1)}
\]\[
+\sum_{i=1}^ka_{i,\ve}
(\frac{\bar{w}_i^2}{v^2}\frac{\partial v}{\partial \tau(p_i)}-
\frac{1}{\xi_{\sigma,i}}
\frac{u^2}{v^2}\psi_{i,\ve})+O(\ve^4),
\]
where
\[
L\phi=\Delta\phi-\phi
+\frac{2(\bar{w}+\ep^3 \bar{v}^{(1)}
+\ep^2\bar{v}^{(2)}
+\ep^2\bar{v}^{(3)}
)\phi}{
T\left[(\bar{w}+\ep^3 \bar{v}^{(1)}
+\ep^2\bar{v}^{(2)}
+\ep^2\bar{v}^{(3)}
)^2\right]}
\]\[
-\frac{(\bar{w}+\ep^3 \bar{v}^{(1)}
+\ep^2\bar{v}^{(2)}
+\ep^2\bar{v}^{(3)}
)^2}{
T\left[(\bar{w}+\ep^3 \bar{v}^{(1)}
+\ep^2\bar{v}^{(2)}
+\ep^2\bar{v}^{(3)})^2\right]^2}\,
T\left[2\xi_{\sigma,i}(\bar{w}+\ep^3 \bar{v}^{(1)}
+\ep^2\bar{v}^{(2)}
+\ep^2\bar{v}^{(3)}
)\phi\right]
\]
and the remainder $\bar{v}_{R,i}^{(1)}$
is given by the difference of the previous two contributions as follows:

$\bar{v}_{R,i}^{(1)}=\bar{v}_{eig,i}^{(1)}-\frac{\partial \bar{v}^{(1)}}{\partial\alpha}$ which satisfies
\begin{equation}
\label{v1r}
\left\{\begin{array}{l}
\Delta v-v=0 \quad\mbox{ in } B_R,
\\[2mm]
\frac{\partial v}{\partial b}= -\frac{1}{6R}w'(|y|)g^{(4)}(0)3 \alpha^2\,(1+O(\ep\alpha))\quad \mbox{ on }\partial B_R.
\end{array}\right.
\end{equation}
We note that $\bar{v}_{R,i}^{(1)}$ is an even
function around $\alpha=0$.

Substituting the decompositions of $\phi_\ve$ and $\psi_\ve$ into (\ref{equ1}),
we have
\begin{eqnarray}
\label{equation3}
&&
L\left[\sum_{i=1}^k a_{i,\ve}\left(\frac{\partial \bar{w}_i}{\partial\alpha}
+\ep^3\bar{v}_{eig,i}^{(1)}
+\ep^2\bar{v}_{eig,i}^{(2)}
+\ep^2\frac{\partial \bar{v}_i^{(3)}}{\partial\alpha}\right)
+\phi_\ep^\perp
\right]
\nonumber
\\[2mm]
&& =\sum_{i=1}^k 2
\ep^3
\bar{w}_i
\bar{v}_{R,i}^{(1)}
\nonumber
\\[2mm]
&&+\frac{1}{\xi_\sigma}\sum_{i=1}^ka_{i,\ve}(\frac{u_i^2}{v^2}\frac{\partial v}{\partial \tau(p_i)}-\frac{u^2}{v^2}\psi_{i,\ve})\nonumber\\
&&
+\Delta \phi_{\ve}^\perp-\phi_{\ve}^\perp+\frac{2u}{v}\phi_{\ve}^\perp-\frac{u^2}{v^2}\psi_{\ve}^\perp
-\lambda_\ve\phi_{\ve}^\perp\nonumber\\
&& =\frac{1}{\xi_\sigma}\lambda_\ve\sum_{i=1}^ka_{i,\ve}\frac{\partial u_i}{\partial \tau(p_i)}+O(\ve^4).
\end{eqnarray}
Set
\begin{equation*}
I_3=\sum_{i=1}^k 2
\ep^3
\bar{w}_i
\bar{v}_{R,i}^{(1)},
\end{equation*}
\begin{equation*}
I_4=\frac{1}{\xi_\sigma}\sum_{i=1}^ka_{i,\ve}(\frac{u_i^2}{v^2}\frac{\partial v}{\partial \tau(p_i)}-\frac{u^2}{v^2}\psi_{i,\ve})
\end{equation*}
and
\begin{equation*}
I_5=\Delta \phi_{\ve}^\perp-\phi_{\ve}^\perp+\frac{2u}{v}\phi_{\ve}^\perp-\frac{u^2}{v^2}\psi_{\ve}^\perp-\lambda\phi_{\ve}^\perp.
\end{equation*}

We first compute
\begin{eqnarray*}
I_4&&=\frac{1}{\xi_\sigma}\sum_{i=1}^ka_{i,\ve}(\frac{u_i^2}{v^2}\frac{\partial v}{\partial \tau(p_i)}-\frac{u^2}{v^2}\psi_{i,\ve})+O(\ve^4)\\
&&=\frac{1}{\xi_\sigma}\sum_{i=1}^ka_{i,\ve}(\frac{u_i^2}{v^2}\frac{\partial v}{\partial \tau(p_i)}-\frac{u_i^2}{v^2}\psi_{i,\ve})-\frac{1}{\xi_\sigma}\sum_{i=1}^k\sum_{j\neq i}a_{i,\ve}\frac{u_j^2}{v^2}\psi_{i,\ve}+O(\ve^4).
\end{eqnarray*}

Since
\begin{eqnarray*}
&&\frac{1}{\xi_\sigma }\sum_{i=1}^k\sum_{|i-j|\geq 2}a_{i,\ve}\frac{u_j^2}{v^2}\psi_{i,\ve}\\
&&=\xi_{\sigma}\sum_{i=1}^k\sum_{|i-j|\geq 2}|\nabla_{p_i}G_{\sqrt{D}}(\sigma_\lambda p_i,\sigma_\lambda p_j)|(1+o(1))\\
&&=O(\xi_{\sigma}\sigma \log\ve (\frac{\ve D}{\xi_\sigma }\log \frac{\ve D}{\xi_\sigma })^2)\\
&&=O(\sigma (\frac{\ve D}{\xi_\sigma }\log \frac{\ve D}{\xi_\sigma })^2),
\end{eqnarray*}
we can estimate $I_4$ as follows:
\begin{eqnarray*}
I_4&&=\frac{1}{\xi_\sigma}\sum_{i=1}^k\sum_{j=1}^ka_{i,\ve}\frac{u_j^2}{v^2}(\frac{\partial v}{\partial \tau(p_j)}\delta_{ij}-\psi_{i,\ve}))+O(\ve^4)\\
&&=-\frac{1}{\xi_\sigma}\sum_{i=1}^k\sum_{|i-j|=1}a_{i,\ve}\frac{u_j^2}{v^2}\psi_{i,\ve}+O(\ve^4)+O(\sigma^2\lambda_\ve)+O(\sigma (\frac{\ve D}{\xi_\sigma }\log \frac{\ve D}{\xi_\sigma })^2),
\end{eqnarray*}
where we use the equation satisfied by $\psi_{i,\ve}$.

\subsection{Expansion of the small eigenvalues}

Multiplying both sides of (\ref{equation3}) by $\frac{\partial u_l}{\partial \tau(p_l)}$ and integrating over $\Omega_\ve$, we have
\begin{eqnarray*}
r.h.s&&=\lambda_\ve\frac{1}{\xi_\sigma}\sum_{i=1}^ka_{i,\ve}\int_{\Omega_\ve}\frac{\partial u_i}{\partial \tau(p_i)}\frac{\partial u_l}{\partial \tau(p_l)}\,dx\\
&&=\lambda_\ve\xi_\sigma a_{l,\ve}\int_{\R^2_+}(\frac{\partial w}{\partial y_1})^2\,dy(1+o(1)).
\end{eqnarray*}
Using the estimate $I_4$, we have
\begin{eqnarray*}
l.h.s&&=\int_{\Omega_\ve}(\Delta \phi_{\ve}^\perp-\phi_{\ve}^\perp
+\frac{2u}{v}\phi_{\ve}^\perp
-\frac{u^2}{v^2}\psi_{\ve}^\perp
-\lambda_\ep\phi_{\ve}^\perp)\frac{\partial u_l}{\partial \tau(p_l)}\,dx\nonumber\\
&&+\int_{\Omega_\ve}\frac{1}{\xi_\sigma}\sum_{i=1}^ka_{i,\ve}(\frac{u_i^2}{v^2}\frac{\partial v}{\partial \tau(p_i)}-\frac{u^2}{v^2}\psi_{i,\ve})\frac{\partial u_l}{\partial \tau(p_l)}\,dx+O(\ve^4)\\
&&=\int_{\Omega_\ve}\frac{u_l^2}{v^2}
\frac{\partial v}{\partial \tau(p_l)}\phi_{\ve}^\perp\,dx
-\int_{\Omega_\ve}\frac{u^2}{v^2}
\psi_{\ve}^\perp\frac{\partial u_l}{\partial \tau(p_l)}\,dx
-\lambda_\ve\int_{\Omega_\ve}\phi_{\ve}^\perp
\frac{\partial u_l}{\partial \tau(p_l)}\,dx\\
&&+\int_{\Omega_\ve}\frac{1}{\xi_\sigma}\sum_{i=1}^k\sum_{j=1}^ka_{i,\ve}\frac{u_j^2}{v^2}(\frac{\partial v}{\partial \tau(p_j)}\delta_{ij}-\psi_{i,\ve}))\frac{\partial u_l}{\partial \tau(p_l)}\,dx
\\
&&+
\sum_{i=1}^k \int_{\Om_\ve}2a_{i,\ve}
\ve^3 \bar{w}_i \bar{v}_{R,i}^{(1)}
\frac{\partial u_l}{\partial \tau(p_l)}
+O(\ve^4)\\
&&=J_{1,l}+J_{2,l}+J_{3,l}+J_{4,l}+J_{5,l}+O(\ve^4),
\end{eqnarray*}
where $J_{i,l}$ are defined as the integrals in the last equality.
We divide our proof into several steps.

The following lemma contains the key estimates:
\begin{lemma}\label{lemma4}
We have
\begin{eqnarray}
&&J_{1,l}=o(\xi_\sigma^2\sigma^2\frac{\ve D}{\xi_\sigma}),\label{e:29}\\
&&J_{2,l}=o(\xi_\sigma^2\sigma^2\frac{\ve D}{\xi_\sigma}),\label{e:27}\\
&&J_{3,l}=o(\xi_\sigma  \lambda_\ve),\label{e:26}\\
&&J_{4,l}=(\int_{\R^2_+}w^2\,dy\int_{\R^2_+}w^2\frac{\partial w}{\partial y_1}y_1\,dy)\xi_{\sigma}^2(1+o(1))\times \Big[[\sum_{|i-l|=1}\nabla^2_{\tau(p_l)}G_{\sqrt{D}}(\sigma p_i,\sigma p_l)]a_{l,\ve}\nonumber\\
&&
-[a_{l-1,\ve}\nabla^2_{\tau(p_l)}G_{\sqrt{D}}(\sigma p_l,\sigma p_{l-1})+a_{l+1,\ve}\nabla^2_{\tau(p_l)}G_{\sqrt{D}}(\sigma p_l,\sigma p_{l+1})]\Big]
+o(\xi_\sigma^2\sigma^2\frac{\ve D}{\xi_\sigma})
\nonumber\\ &&
J_{5,l}=\ep^3 \xi_{\sigma,l} a_{l,\ve}\frac{3}{4}\nu_1 \frac{\partial^2 }{\partial \tau(\ve p_l)^2}h(\ve p_l)+
o(\xi_\sigma^2\sigma^2\frac{\ve D}{\xi_\sigma}),\\
\label{e:34}
\end{eqnarray}
where $a_{i,\ve}$ has been defined in (\ref{equation1}).
\end{lemma}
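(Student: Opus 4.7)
The plan is to evaluate the five integrals $J_{1,l},\ldots, J_{5,l}$ separately. The first three are error terms involving the perpendicular components $\phi_\ve^\perp\in \mathcal{K}_{\ve,{\bf p}}^\perp$ and $\psi_\ve^\perp = T_{\tau\lambda_\ve}[\phi_\ve^\perp]$, and must be shown to be of lower order than $\xi_\sigma^2 \sigma^2 \frac{\ve D}{\xi_\sigma}$. The last two integrals carry the leading contributions: $J_{4,l}$ generates the Green's-function matrix $\mathcal{M}({\bf p}^0)$, while $J_{5,l}$ produces the curvature term proportional to $h''(P^0)$.

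First I would bound $\phi_\ve^\perp$ and $\psi_\ve^\perp$. Substituting the decomposition (\ref{equation1}) into the eigenvalue system and projecting onto $\mathcal{K}_{\ve,{\bf p}}^\perp$ gives an equation of the form $\mathcal{L}_{\ve,{\bf p}}(\phi_\ve^\perp,\psi_\ve^\perp) = I_3 + I_4 + \lambda_\ve(\cdots) + O(\ve^4)$, whose right-hand side is $O(\ve^3) + O(\sigma(\frac{\ve D}{\xi_\sigma}\log\frac{\ve D}{\xi_\sigma})^2) + O(\lambda_\ve\xi_\sigma^2)$. Invertibility of $\mathcal{L}_{\ve,{\bf p}}$ on the perpendicular space (Proposition \ref{linboun}) then yields $\|\phi_\ve^\perp\|_{H^2}=o(\xi_\sigma\sigma\frac{\ve D}{\xi_\sigma})$ and likewise for $\psi_\ve^\perp$. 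The estimates (\ref{e:29}) and (\ref{e:27}) then follow by Cauchy--Schwarz against $\frac{\partial u_l}{\partial\tau(p_l)}$ using the exponential decay of the latter, while (\ref{e:26}) uses $\lambda_\ve = o(1)$ together with the orthogonality of $\phi_\ve^\perp$ to $\frac{\partial u_l}{\partial \tau(p_l)}$ up to $O(\ve^3)$ corrections from the difference between $\frac{\partial u_l}{\partial \tau(p_l)}$ and its leading piece $\xi_{\sigma,l}\frac{\partial \bar{w}_l}{\partial \alpha}$.

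For $J_{4,l}$, I would use the Green's-function representation
\[
\psi_{i,\ve}(x) = \int_{\Om_\ve} G_{\sqrt{D}}(\sigma_\lambda x,\sigma_\lambda y)\, 2u(y)\phi_{i,\ve}(y)\,dy.
\]
The diagonal piece $i=l$ combined with $\frac{\partial v}{\partial \tau(p_l)}$ produces even-in-$z_1$ terms which integrate to zero against $\frac{\partial u_l}{\partial\tau(p_l)}$; only the off-diagonal contributions $|i-l|=1$ survive. For these, Taylor-expand the Green's function around $p_l$ in the tangential direction to second order, noting that the first-order expansion already enters the reduced problem in Section 5 and is therefore absorbed by the choice of $P_{i,\ve}$. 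The surviving second-order piece yields $\nabla^2_{\tau(p_l)} G_{\sqrt{D}}(\sigma p_l,\sigma p_{l\pm 1})$ multiplied by $\int w^2\int w^2\frac{\partial w}{\partial y_1}y_1\,dy$, and terms with $|i-l|\geq 2$ are negligible by the geometric decay (\ref{e:11}).

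The hardest piece is $J_{5,l}$, since it requires a clean identification of the curvature contribution carried by the remainder $\bar{v}_{R,i}^{(1)}$ which solves (\ref{v1r}) with boundary datum $-\frac{1}{2R}w'(|y|)g^{(4)}(0)\alpha^2(1+O(\ve\alpha))$ that is even in $\alpha$. Since $\frac{\partial u_l}{\partial\tau(p_l)}\sim \xi_{\sigma,l}\frac{\partial \bar w_l}{\partial\alpha}$ is odd in $\alpha$, only the $i=l$ diagonal term contributes after pairing with the even factor $\bar{w}_i$, while the cross terms in $\frac{\partial u_l}{\partial\tau(p_l)}$ of order $\ve^2$ (coming from $\bar v^{(2)}$ and $\bar v^{(3)}$) provide the matching parity. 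I would then integrate by parts twice, using $\Delta\bar{v}_{R,i}^{(1)} - \bar{v}_{R,i}^{(1)}=0$ together with $\Delta \bar w - \bar w + \bar w^2 = 0$, to convert the bulk integral into the boundary integral $\int_{\partial B_R} w'(|y|)\alpha^2 \,\partial_\alpha \bar w_i\,d\ell$. Converting between polar $(\alpha,b)$ and Cartesian $(y_1,y_2)$ coordinates (using $y_1\sim R\alpha$ at leading order) and the definition $g^{(4)}(0)=-\rho^{(4)}(0)R^4+3R$ produces an integral of the form $\int_\R(\partial_{y_1}w(y_1,0))^2 y_1^2\,dy_1$ multiplied by $\frac{\partial^2}{\partial\tau^2} h(P^0)$ and a combinatorial factor $3/4$; comparison with (\ref{nu1}) yields exactly $\frac{3}{4}\nu_1 h''(P^0)$. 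The main obstacle will be tracking these geometric constants carefully and verifying that the curvature expansion of the boundary (through the polar change of variables in Section 7.1) matches the normalisation of $\nu_1$ chosen in the Cartesian derivation of (\ref{e:18}).
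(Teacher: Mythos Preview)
Your overall strategy matches the paper's: bound $\phi_\ve^\perp$ via the invertibility of $\mathcal{L}_{\ve,{\bf p}}$, dispose of $J_{1,l},J_{2,l},J_{3,l}$ as lower order, and extract the Green's-function matrix from $J_{4,l}$ and the curvature from $J_{5,l}$ by a Green's-identity argument on $(\Delta-1)$. The treatment of $J_{5,l}$ is essentially the paper's, and your remark that the first-order Green's-function expansion is absorbed by the reduced problem of Section~5 is exactly the right cancellation.

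There is, however, a genuine gap in your handling of $J_{4,l}$. You assert that ``the diagonal piece $i=l$ combined with $\frac{\partial v}{\partial \tau(p_l)}$ produces even-in-$z_1$ terms which integrate to zero against $\frac{\partial u_l}{\partial\tau(p_l)}$; only the off-diagonal contributions $|i-l|=1$ survive.'' This is not correct: the $i=l$ term is precisely what produces the $a_{l,\ve}$ coefficient in the answer (the diagonal of the matrix $\mathcal{M}({\bf p}^0)$). The point is that $\frac{\partial v}{\partial\tau(p_l)}-\psi_{l,\ve}$ is \emph{not} even in $z_1$ around $p_l$: its value at $p_l$ is already of order $\sum_{|m-l|=1}\nabla_{\tau(p_l)}G_{\sqrt{D}}(\sigma p_l,\sigma p_m)\,\xi_\sigma^2\int w^2$, and its first variation in $z_1$ carries $\sum_{|m-l|=1}\nabla^2_{\tau(p_l)}G_{\sqrt{D}}(\sigma p_l,\sigma p_m)$. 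The paper makes this explicit by splitting $J_{4,l}=J_{6,l}+J_{7,l}$ into the value at $p_l$ (which pairs with the small integral $\int\frac{u_l^2}{v^2}\frac{\partial u_l}{\partial\tau(p_l)}$ and is negligible) and the linear-in-$z_1$ variation (which pairs with $\int w^2\frac{\partial w}{\partial y_1}y_1$ and gives the leading term). If you drop the $i=l$ contribution your matrix will have no diagonal, and the eigenvalue conclusion collapses.

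Two smaller issues. First, your bound $\|\phi_\ve^\perp\|_{H^2}=o(\xi_\sigma\sigma\frac{\ve D}{\xi_\sigma})$ is stronger than what the right-hand side $I_3+I_4$ actually gives; the paper only obtains $O(\xi_\sigma\sigma\frac{\ve D}{\xi_\sigma}\log\frac{\ve D}{\xi_\sigma})$, and that is what is used downstream. Second, plain Cauchy--Schwarz is not sharp enough for $J_{2,l}$ because $\psi_\ve^\perp$ is not localised; the paper instead computes $\psi_\ve^\perp(p_l)$ and $\psi_\ve^\perp(p_l+y)-\psi_\ve^\perp(p_l)$ separately from the Green's representation, and uses that $\int\frac{u_l^2}{v^2}\frac{\partial u_l}{\partial\tau(p_l)}\,dx$ is itself small (it equals $\frac{2}{3}\int\frac{u_l^3}{v^3}\frac{\partial v}{\partial\tau(p_l)}+O(\ve^4)$). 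You will need this refinement to reach $o(\xi_\sigma^2\sigma^2\frac{\ve D}{\xi_\sigma})$.
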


\begin{proof}
We first study the asymptotic behaviour of $\psi_{j,\ve}$.

Note that for $l\neq k$, we have
\begin{eqnarray}\label{e:23}
\psi_{k,\ve}(p_l)&&=\int_{\Omega_\ve}G_{\sqrt{D}}(\sigma_\lambda p_l,\sigma_\lambda y)2u\phi_{k,\ve}(y)\,dy\\
&&=\nabla_{\tau(p_k)}G_{\sqrt{D}}(\sigma_\lambda p_l,\sigma_\lambda p_k)\int_{\R^2_+}2\xi_{\sigma,k}^2w\frac{\partial w}{\partial y_1}y\,dy+O(\xi_{\sigma}^2\sigma^2\frac{\ve D}{\xi_\sigma})\nonumber\\
&&=\nabla_{\tau(p_k)}G_{\sqrt{D}}(\sigma p_l,\sigma p_k)\int_{\R^2_+}2\xi_{\sigma,k}^2w\frac{\partial w}{\partial y_1}y\,dy(1+O(\lambda_\ve \log \ve))+o(\xi_{\sigma}^2\sigma^2\frac{\ve D}{\xi_\sigma}).\nonumber
\end{eqnarray}
Next we compute $\psi_{l,\ve}-\frac{\partial v}{\partial \tau(p_l)}$ near $p_l$:
\begin{eqnarray*}
v(x)&&=\int_{\Omega_\ve}G_{\sqrt{D}}(\sigma x,\sigma y)u^2(y)\,dy\\
&&=\int_{\Omega_\ve}\frac{1}{\pi}\log\frac{1}{\sigma|x-y|}u_l^2(y)\,dy+\int_{\Omega_\ve}\tilde{H}(\sigma x,\sigma y)u^2_l(y)\,dy\\
&&+\sum_{i\neq l}\int_{\Omega_\ve}G_{\sqrt{D}}(\sigma x,\sigma y)u_i^2(y)\,dy+O(\ve^4),
\end{eqnarray*}
so
\begin{eqnarray*}
\frac{\partial v}{\partial \tau(p_l)}(x)&&=\sum_{|i-l|=1}\nabla_{\tau(p_i)}G_{\sqrt{D}}(\sigma x,\sigma p_i)\xi_{\sigma,i}^2\int_{\R^2_+}w^2\,dy+O(\xi_\sigma^2\sigma |\log \ve|(\frac{\ve D}{\xi_\sigma}\log\frac{\ve D}{\xi_\sigma})^2)\\
&&=\sum_{|i-l|=1}\nabla_{\tau(p_i)}G_{\sqrt{D}}(\sigma x,\sigma p_l)\xi_{\sigma,i}^2\int_{\R^2_+}w^2\,dy+o(\xi_{\sigma}^2\sigma^2\frac{\ve D}{\xi_\sigma}),
\end{eqnarray*}
where we have used the relation $e^{-\frac{1}{\sqrt{D}}}\ll\ve$.

Thus
\begin{eqnarray}\label{e:24}
\frac{\partial v}{\partial \tau(p_l)}-\psi_{l,\ve}(p_l)&&=\sum_{|i-l|=1}\nabla_{\tau(p_l)}G_{\sqrt{D}}(\sigma p_l,\sigma p_i)\xi^2_{\sigma,i}\int_{\R^2_+}w^2\,dy\nonumber\\
&&+o(\xi_{\sigma}^2\sigma^2\frac{\ve D}{\xi_\sigma}).
\end{eqnarray}

Combining (\ref{e:23}) and (\ref{e:24}), we have
\begin{eqnarray}\label{equation5}
(\psi_{i,\ve}-\frac{\partial v}{\partial \tau(p_i)}\delta_{il})(p_l)&&=\nabla_{\tau(p_i)}G_{\sqrt{D}}(\sigma p_l,\sigma p_i)(2\xi_{\sigma,i}^2\int_{\R^2_+}w\frac{\partial w}{\partial y_1}y_1\,dy)[\delta_{i,l-1}+\delta_{i,l+1}]\nonumber\\
&&-\delta_{il}\sum_{|m-l|=1}\nabla_{\tau(p_l)}G_{\sqrt{D}}(\sigma p_l,\sigma p_m)(\xi_{\sigma,m}^2\int_{\R^2_+}w^2\,dy)\nonumber\\
&&+o(\xi_{\sigma}^2\sigma^2\frac{\ve D}{\xi_\sigma})+o(\xi_\sigma^2\lambda_\ve).
\end{eqnarray}

Similarly, we have the following:
\begin{eqnarray}\label{e:30}
&&\psi_{i,\ve}(p_l+(y_1,0))-\psi_{i,\ve}(p_l)\\
&&=\int_{\Omega_\ve}(G_{\sqrt{D}}(\sigma_\lambda(p_l+y),\sigma_\lambda x)-G_{\sqrt{D}}(\sigma_\lambda p_l,\sigma_\lambda x))2u\phi_{i,\ve}(x)\,dx\nonumber\\
&&=2\xi_{\sigma,i}^2\nabla_{\tau(p_i)}\nabla_{\tau(p_l)}G_{\sqrt{D}}(\sigma_\lambda p_l,\sigma_\lambda p_i)y_1\int_{\R^2_+}w\frac{\partial w}{\partial x_1}x_1\,dx+O(\sigma^3\xi_\sigma^2\frac{\ve D}{\xi_\sigma}\log\frac{\ve D}{\xi_\sigma} y^2)\nonumber\\
&&=2\xi_{\sigma,i}^2\nabla_{\tau(p_i)}\nabla_{\tau(p_l)}G_{\sqrt{D}}(\sigma_\lambda p_l,\sigma_\lambda p_i)y_1\int_{\R^2_+}w\frac{\partial w}{\partial x_1}x_1\,dx[\delta_{l,i-1}+\delta_{l,i+1}]\nonumber\\
&&
+O(\sigma^3\xi_\sigma^2\frac{\ve D}{\xi_\sigma}\log\frac{\ve D}{\xi_\sigma} y^2)
+O(\xi_\sigma^2\sigma^2\log\ve (\frac{\ve D}{\xi_\sigma}\log\frac{\ve D}{\xi_\sigma})^2y)\nonumber\\
&&=2\xi_{\sigma,i}^2\nabla_{\tau(p_i)}\nabla_{\tau(p_l)}G_{\sqrt{D}}(\sigma p_l,\sigma p_i)y_1\int_{\R^2_+}w\frac{\partial w}{\partial x_1}x_1\,dx[\delta_{l,i-1}+\delta_{l,i+1}]\nonumber\\
&&+O(\sigma^3\xi_\sigma^2\frac{\ve D}{\xi_\sigma}\log\frac{\ve D}{\xi_\sigma} y^2)+O(\xi_\sigma^2\sigma^2\log\ve (\frac{\ve D}{\xi_\sigma}\log\frac{\ve D}{\xi_\sigma})^2y)+o(\xi_\sigma^2 \lambda_\ve y)\nonumber
\end{eqnarray}
for $i\neq l$
and
\begin{eqnarray}\label{e:31}
&&(\psi_{l,\ve}-\frac{\partial v}{\partial \tau(p_l)})(p_l+(y_1,0))-(\psi_{l,\ve}-\frac{\partial v}{\partial \tau(p_l)})(p_l)\\
&&=\sum_{|i-l|=1}\xi_{\sigma,i}^2\nabla_{\tau(p_i)}\nabla_{\tau(p_l)}G_{\sqrt{D}}(\sigma p_i,\sigma p_l)y_1\int_{\R^2_+}w^2\,dx\nonumber\\
&&+O(\sigma^3\xi_\sigma^2\frac{\ve D}{\xi_\sigma}\log\frac{\ve D}{\xi_\sigma} y^2)+O(\xi_\sigma^2\sigma^2\log\ve (\frac{\ve D}{\xi_\sigma}\log\frac{\ve D}{\xi_\sigma})^2y)+o(\xi_\sigma^2 \lambda_\ve y).\nonumber
\end{eqnarray}
Thus we have for $J_{4,l}$,
\begin{eqnarray*}
J_{4,l}&&=\frac{1}{\xi_\sigma}\sum_{i,j=1}^k\int_{\Omega_\ve}a_{i,\ve}\frac{u_j^2}{v^2}(\frac{\partial v}{\partial \tau(p_j)}\delta_{ij}-\psi_{i,\ve})\frac{\partial u_l}{\partial \tau(p_l)}\,dx\\
&&=\frac{1}{\xi_\sigma}\sum_{i,j=1}^k\int_{\Omega_\ve}a_{i,\ve}\frac{u_j^2}{v^2}(\frac{\partial v}{\partial \tau(p_j)}\delta_{ij}-\psi_{i,\ve})(p_l)\frac{\partial u_l}{\partial \tau(p_l)}\,dx\\
&&+
\frac{1}{\xi_\sigma}\sum_{i,j=1}^k\int_{\Omega_\ve}a_{i,\ve}\frac{u_j^2}{v^2}[(\frac{\partial v}{\partial \tau(p_j)}\delta_{ij}-\psi_{i,\ve})(x)-(\frac{\partial v}{\partial \tau(p_j)}\delta_{ij}-\psi_{i,\ve})(p_l)]\frac{\partial u_l}{\partial \tau(p_l)}\,dx\\
&&=J_{6,l}+J_{7,l}.
\end{eqnarray*}
For $J_{6,l}$, we have from (\ref{equation5})
\begin{eqnarray*}
J_{6,l}&&=\frac{1}{\xi_\sigma}\sum_{i,j=1}^k\int_{\Omega_\ve}a_{i,\ve}\frac{u_j^2}{v^2}(\frac{\partial v}{\partial \tau(p_j)}\delta_{ij}-\psi_{i,\ve})(p_l)\frac{\partial u_l}{\partial \tau(p_l)}\,dx\\
&&=\sum_{i}^k\frac{1}{\xi_\sigma}(\frac{\partial v}{\partial \tau(p_l)}\delta_{il}-\psi_{i,\ve})(p_l)\int_{\Omega_\ve}a_{i,\ve}\frac{u_l^2}{v^2}\frac{\partial u_l}{\partial \tau(p_l)}\,dx\\
&&+\sum_{i=1}^k\sum_{j\neq l}\frac{1}{\xi_\sigma}(\frac{\partial v}{\partial \tau(p_j)}\delta_{ij}-\psi_{i,\ve})(p_l)\int_{\Omega_\ve}a_{i,\ve}\frac{u_j^2}{v^2}\frac{\partial u_l}{\partial \tau(p_l)}\,dx=o(\xi_\sigma^2\sigma^2\frac{\ep D}{\xi_\sigma}).
\end{eqnarray*}

Similarly, using (\ref{e:30}), (\ref{e:31}), (\ref{e:32}) and (\ref{e:33}), we get
\begin{eqnarray*}
J_{7,l}&&=\frac{1}{\xi_\sigma}\sum_{i,j=1}^k\int_{\Omega_\ve}a_{i,\ve}\frac{u_j^2}{v^2}[(\frac{\partial v}{\partial \tau(p_j)}\delta_{ij}-\psi_{i,\ve})(x)-(\frac{\partial v}{\partial \tau(p_j)}\delta_{ij}-\psi_{i,\ve})(p_l)]\frac{\partial u_l}{\partial \tau(p_l)}\,dx\\
&&=\frac{1}{\xi_\sigma}\sum_{i=1}^ka_{i,\ve}\int_{\Omega_\ve}\frac{u_i^2}{v^2}[(\frac{\partial v}{\partial \tau(p_i)}-\psi_{i,\ve})(x)-(\frac{\partial v}{\partial \tau(p_i)}-\psi_{i,\ve})(p_l)]\frac{\partial u_l}{\partial \tau(p_l)}\,dx\\
&&-\sum_{i=1}^k\sum_{j\neq i}
\frac{1}{\xi_\sigma}\int_{\Omega_\ve}a_{i,\ve}\frac{u_j^2}{v^2}[\psi_{i,\ve}(x)-\psi_{i,\ve}(p_l)]\frac{\partial u_l}{\partial \tau(p_l)}\,dx\\
&&=\frac{1}{\xi_\sigma}\int_{\Omega_\ve}a_{l,\ve}\frac{u_l^2}{v^2}[(\frac{\partial v}{\partial \tau(p_l)}-\psi_{l,\ve})(x)-(\frac{\partial v}{\partial \tau(p_l)}-\psi_{l,\ve})(p_l)]\frac{\partial u_l}{\partial \tau(p_l)}\,dx\\
&&-\frac{1}{\xi_\sigma}\sum_{|i-l|=1}\int_{\Omega_\ve}a_{i,\ve}\frac{u_l^2}{v^2}[\psi_{i,\ve}(x)-\psi_{i,\ve}(p_l)]\frac{\partial u_l}{\partial \tau(p_l)}\,dx\\
&&+o(\xi_\sigma^2\sigma^2\frac{\ve D}{\xi_\sigma})\\
&&=(\int_{\R^2_+}w^2\,dy\int_{\R^2_+}w^2\frac{\partial w}{\partial y_1}y_1\,dy)[\sum_{|i-l|=1}\xi_{\sigma,i}^2\nabla^2_{\tau(p_l)}G_{\sqrt{D}}(\sigma p_i,\sigma p_l)]a_{l,\ve}\\
&&+(2\int_{\R^2_+}w\frac{\partial w}{\partial y_1}y_1\,dy\int_{\R^2_+}w^2\frac{\partial w}{\partial y_1}y_1\,dy)\\
&&\times [\xi_{\sigma,l-1}^2a_{l-1,\ve}\nabla^2_{\tau(p_l)}G_{\sqrt{D}}(\sigma p_l,\sigma p_{l-1})+\xi_{\sigma,l+1}^2a_{l+1,\ve}\nabla^2_{\tau(p_l)}G_{\sqrt{D}}(\sigma p_l,\sigma p_{l+1})]\\
&&+o(\xi_\sigma^2\sigma^2\frac{\ve D}{\xi_\sigma})\\
&&=(\int_{\R^2_+}w^2\,dy\int_{\R^2_+}w^2\frac{\partial w}{\partial y_1}y_1\,dy)\xi_{\sigma}^2(1+o(1))\times \Big[[\sum_{|i-l|=1}\nabla^2_{\tau(p_l)}G_{\sqrt{D}}(\sigma p_i,\sigma p_l)]a_{l,\ve}\\
&&
-[a_{l-1,\ve}\nabla^2_{\tau(p_l)}G_{\sqrt{D}}(\sigma p_l,\sigma p_{l-1})+a_{l+1,\ve}\nabla^2_{\tau(p_l)}G_{\sqrt{D}}(\sigma p_l,\sigma p_{l+1})]\Big]+o(\xi_\sigma^2\sigma^2\frac{\ve D}{\xi_\sigma}),
\end{eqnarray*}
where we have used the relation
\begin{equation*}
2\int_{\R^2_+}w\frac{\partial w}{\partial y_1}y_1\,dy=-\int_{\R^2_+}w^2\,dy.
\end{equation*}

Combining the above two estimates, we have
\begin{eqnarray*}
J_{4,l}&&=(\int_{\R^2_+}w^2\,dy\int_{\R^2_+}w^2\frac{\partial w}{\partial y_1}y_1\,dy)\xi_{\sigma}^2(1+o(1))\times \Big[[\sum_{|i-l|=1}\nabla^2_{\tau(p_l)}G_{\sqrt{D}}(\sigma p_i,\sigma p_l)]a_{l,\ve}\\
&&
-[a_{l-1,\ve}\nabla^2_{\tau(p_l)}G_{\sqrt{D}}(\sigma p_l,\sigma p_{l-1})+a_{l+1,\ve}\nabla^2_{\tau(p_l)}G_{\sqrt{D}}(\sigma p_l,\sigma p_{l+1})]\Big]+o(\xi_\sigma^2\sigma^2\frac{\ve D}{\xi_\sigma}),
\end{eqnarray*}
and this proves (\ref{e:34}).

Using
\[
\frac{1}{R}\,\frac{\partial w}{\partial\alpha}=w'(|y|)(1+O(\ep\alpha))
\]
and
\[
\frac{1}{R^4}g^{(4)}(0)=\frac{\partial^2 }{\partial \tau(\ve p_i)^2}h(\ve p_i)\,(1+O(\ep|p_i|)),
\]
we can evaluate the integral $J_{5,l}$.
A computation analogous to (\ref{e:18}) gives
\begin{eqnarray}\label{e:18a}
J_{5,l}&=&
\sum_{i=1}^k 2\ve^3 a_{\ve,i}\int_{\Om_\ep}
\bar{w}_i\bar{v}_{R,i}^{(1)}\frac{\partial u_l}{\partial \tau(p_l)}
\nonumber \\
& =& \xi_{\sigma,l} \ve^3 a_{\ep,l}
\int_{B_{R}}2w
\bar{v}_{R}^{(1)}\frac{1}{r}\frac{\partial w }{\partial \alpha}r\,(1+O(\ep\alpha))\,dr\,d\alpha
\nonumber \\
&
=&\xi_{\sigma,l} \ve^3 a_{\ep,l}\int_{B_R}-\frac{1}{r}[(\Delta -1)\frac{\partial w}{\partial \alpha}]\bar{v}_{R,l}^{(1)}\,r\,(1+O(\ep\alpha))\,dr\,d\alpha\nonumber\\
&=&\xi_{\sigma,l}\ve^3 a_{\ep,l}\int_{\partial B_R}-\frac{1}{R}\left(\frac{\partial w}{\partial \alpha}\frac{\partial \bar{v}_{R,l}^{(1)}}{\partial b}-\bar{v}_{R,l}^{(1)}\frac{\partial }{\partial b}\frac{\partial w}{\partial \alpha}\right)\,R\,(1+O(\ep\alpha))\,d\alpha\nonumber\\
&=&\ep^3\xi_{\sigma,l} a_{\ep,l}\int_{\R}(w')^2\frac{1}{6R^2}g^{(4)}(0) \frac{3y_1^2}{R^2} \,(1+O(\ep y_1))\,d y_1\nonumber\\
&=&\ep^3\xi_{\sigma,l} a_{\ep,l}\frac{3}{2R^4}\nu_1 g^{(4)}(0)
+o(\xi_\sigma^2\sigma^2\frac{\ve D}{\xi_\sigma})\nonumber\\
&=&\ep^3\xi_{\sigma,l} a_{\ep,l}\frac{3}{2}\nu_1 \frac{\partial^2 }{\partial \tau(\ve p_l)^2}h(\ve p_l)
+o(\xi_\sigma^2\sigma^2\frac{\ve D}{\xi_\sigma}),
\end{eqnarray}
where the constant $\nu_1>0$ has been defined in (\ref{nu1}).
Thus
\begin{eqnarray*}
J_{5,l}
&&=
\ep^3 \xi_{\sigma,l} a_{l,\ep} \frac{3}{2}\nu_1 \frac{\partial^2 }{\partial \tau(\ve p_l)^2}h(\ve p_l)+
o(\xi_\sigma^2\sigma^2\frac{\ve D}{\xi_\sigma}).
\end{eqnarray*}

From the estimates on $I_3$ and $I_4$, we know
that
\begin{equation}\label{e:28}
\|\phi_{\ve}^\perp\|_{H^2(\Omega_\ve)}\leq C\|I_3\|_{L^2(\Omega_\ve)}\leq C\xi_\sigma \sigma \frac{\ve D}{\xi_\sigma}\log\frac{\ve D}{\xi_\sigma}.
\end{equation}
Next by the definition of $J_{1,l}$, using (\ref{e:28})
\begin{eqnarray}
J_{1,l}&&=\int_{\Omega_\ve}\frac{u_l^2}{v^2}\frac{\partial v}{\partial \tau(p_l)}\phi_{\ve}^\perp \,dx\nonumber\\
&&=O(\xi_{\sigma}^2\sigma \frac{\ve D}{\xi_\sigma}\log\frac{\ve D}{\xi_\sigma})\|\phi_{\ve}^\perp\|_{H^2(\Omega_\ve)}\nonumber\\
&&=o(\xi_\sigma^2\sigma^2\frac{\ve D}{\xi_\sigma}).
\end{eqnarray}
We have obtained (\ref{e:29}).
Next we estimate $J_{2,l}$:
\begin{eqnarray*}
J_{2,l}&&=-\int_{\Omega_\ve}\frac{u^2}{v^2}\psi_{\ve}^\perp\frac{\partial u_l}{\partial \tau(p_l)}\,dx\\
&&=-\int_{\Omega_\ve}\frac{u_l^2}{v^2}\psi_{\ve}^\perp \frac{\partial u_l}{\partial \tau(p_l)}\,dx+O(\ve^4)\\
&&=-\int_{\Omega_\ve}\frac{u_l^2}{v^2}\psi_{\ve}^\perp(p_l)\frac{\partial u_l}{\partial \tau(p_l)}\,dx-\int_{\Omega_\ve}\frac{u_l^2}{v^2}\frac{\partial u_l}{\partial \tau(p_l)}(\psi_{\ve}^\perp(x)-\psi_{\ve}^\perp(p_l))\,dx+O(\ve^4)\\
&&=-J_{8,l}-J_{9,l}+O(\ve^4).
\end{eqnarray*}

By the equation satisfied by $\psi_{\ve}^\perp$, we have
\begin{eqnarray*}
\psi_{\ve}^\perp(p_l)&&=\int_{\Omega_\ve}G_{\sqrt{D}}(\sigma_\lambda p_l,\sigma_\lambda x)2u\phi_{\ve}^\perp(x)\,dx\\
&&=O(\xi_\sigma \sigma \frac{\ve D}{\xi_\sigma}\log\frac{\ve D}{\xi_\sigma}).
\end{eqnarray*}

Further,
\begin{eqnarray*}
\psi_{\ve}^\perp(p_l+y)-\psi_{\ve}^\perp(p_l)&&=\int_{\Omega_\ve}[G_{\sqrt{D}}(\sigma_\lambda(p_l+y),\sigma_\lambda x)-G_{\sqrt{D}}(\sigma_\lambda p_l,\sigma_\lambda x)]2u\phi_{\ve}^\perp(x)\,dx\\
&&=\int_{\Omega_\ve}[G_{\sqrt{D}}(\sigma_\lambda(p_l+y),\sigma_\lambda x)-G_{\sqrt{D}}(\sigma_\lambda p_l,\sigma_\lambda x)]2\sum_{j=1}^ku_j\phi_{\ve}^\perp(x)\,dx+O(\ve^4)\\
&&=O(\xi_\sigma\sigma^3\frac{\ve D}{\xi_\sigma}\log\frac{\ve D}{\xi_\sigma}y)
\end{eqnarray*}

We have by the estimate for $\frac{\partial v}{\partial \tau(p_l)}$,
\begin{eqnarray*}
J_{8,l}&&=\psi_{\ve}^\perp(p_l)\int_{\Omega_\ve}\frac{u_l^2}{v^2}\frac{\partial u_l}{\partial \tau(p_l)}\,dx\\
&&=\psi_{\ve}^\perp(p_l)(\int_{\Omega_\ve}\frac{2}{3}\frac{u_l^3}{v^3}\frac{\partial v}{\partial \tau(p_l)}+O(\ve^4))\\
&&=O(\xi_\sigma^3\sigma^2(\frac{\ve D}{\xi_\sigma}\log\frac{\ve D}{\xi_\sigma})^2)=o(\xi_\sigma^2\sigma^2\frac{\ve D}{\xi_\sigma}).
\end{eqnarray*}
and
\begin{eqnarray*}
J_{9,l}&&=\int_{\Omega_\ve}\frac{u_l^2}{v^2}\frac{\partial u_l}{\partial \tau(p_l)}(\psi_{\ve}^\perp(x)-\psi_{\ve}^\perp(p_l))\,dx\\
&&=O(\xi_{\sigma}^2\sigma^3\frac{\ve D}{\xi_\sigma}\log\frac{\ve D}{\xi_\sigma})=o(\xi_{\sigma}^2\sigma^2\frac{\ve D}{\xi_\sigma})
\end{eqnarray*}
Thus we have
\begin{equation}
J_{2,l}=o(\xi_{\sigma}^2\sigma^2\frac{\ve D}{\xi_\sigma}).
\end{equation}
and (\ref{e:27}) follows.
Last we consider $J_{3,l}$,
\begin{eqnarray}\label{e:25}
J_{3,l}&&=\lambda_\ve\int_{\Omega_\ve}\phi_{\ve}^\perp\frac{\partial u_l}{\partial \tau(p_l)}\,dx\nonumber\\
&&=O(\xi_\sigma^2\sigma \frac{\ve D}{\xi_\sigma}\log\frac{\ve D}{\xi_\sigma}\lambda_\ve)=o(\xi_\sigma \lambda_\ve).
\end{eqnarray}
Thus (\ref{e:26}) follows.

\end{proof}

\section{Acknowledgements}
The  research  of J.~Wei is partially supported by NSERC of Canada.
MW thanks the Department of Mathematics at UBC for their kind hospitality.

\end{document}